\def\cleardoublepage{\clearpage\if@twoside \ifodd\c@page\else%
         \hbox{}%
     \thispagestyle{empty}
     \newpage%
     \if@twocolumn\hbox{}\newpage\fi\fi\fi}
\DeclareMathOperator{\sign}{sign}
\let\cleardoublepage\clearpage
\newcommand{\R}{\mathbb{R}}
\newcommand{\norm}[1]{\left\lVert#1\right\rVert}
\newtheorem{thm}{Theorem}[section]
\newtheorem{lem}[thm]{Lemma}
\newtheorem{pro}[thm]{Proposition}
\newtheorem{den}[thm]{Definition}
\newtheorem{oss}[thm]{Remark}
\numberwithin{equation}{section}
\begin{document}

\title[An inhomogeneous porous medium equation with large data]{An inhomogeneous porous medium equation \\ with large data: well-posedness}

\author{Matteo Muratori and Troy Petitt}

\address{Matteo Muratori and Troy Petitt: Dipartimento di Matematica, Politecnico di Milano, Piazza Leonardo da Vinci 32, 20133 Milano (Italy)}
\email{matteo.muratori@polimi.it}
\email{troy.petitt@polimi.it}


\makeatletter
\@namedef{subjclassname@2020}{%
	\textup{2020} Mathematics Subject Classification}
\makeatother

\subjclass[2020]{Primary: 35A01, 35A02. Secondary: 35A24, 35B44, 35B45, 35B51, 35D35, 35K15, 35K55, 35K65.}

\keywords{Porous medium equation; weighted Lebesgue spaces; smoothing effects; large data; blow-up.}


\begin{abstract}
	We study solutions of a Euclidean weighted porous medium equation when the weight behaves at spacial infinity like $|x|^{-\gamma}$, for $\gamma\in [0,2)$, and is allowed to be singular at the origin. In particular we show local-in-time existence and uniqueness for a class of large initial data which includes as ``endpoints''  those growing at a rate of $ |x|^{(2-\gamma)/(m-1)}$, in a weighted $L^1$-average sense. We also identify  global-existence and blow-up classes, whose respective forms strongly support the claim that such a growth rate is optimal, at least for positive solutions. As a crucial step in our existence proof we establish a local smoothing effect for large data without resorting to the classical Aronson-B\'{e}nilan inequality and using the B\'enilan-Crandall inequality instead, which may be of independent interest since the latter holds in much more general settings.
\end{abstract}

\maketitle
\tableofcontents

\section{Introduction}
We discuss existence, uniqueness, smoothing effects and blow-up phenomena of very weak solutions of the Cauchy problem for a \emph{weighted porous medium equation} in $ \R^N $, namely
\begin{equation}\label{wpme}
	\begin{cases}
		\rho \, u_t = \Delta\!\left(u^m \right) & \text{in } \mathbb{R}^N\times (0,T) \, , \\
		u  = u_0 & \text{on } \mathbb{R}^N\times \{0\} \, ,
	\end{cases}
\end{equation}
for $ m>1 $ and $T>0$ (possibly equal to $ +\infty $). The weight $\rho \equiv \rho(x) $ is a measurable function satisfying the pointwise bounds
\begin{equation}\label{weight-cond}
k \left( 1 + |x|  \right)^{-\gamma}  \le \rho(x) \le K \left| x \right|^{-\gamma} \qquad \text{for a.e. } x \in \mathbb{R}^N \, , 
\end{equation}
for some $\gamma\in [0,2)$ and ordered constants $ K,k>0$. When dealing with sign-changing solutions, we implicitly set $ u^m := |u|^{m-1}u $ as is standard practice. In the spirit of the work of B\'enilan, Crandall and Pierre \cite{BCP}, where the unweighted equation was investigated, we are interested in considering a wide class of initial data $u_0$, which are assumed to be at least locally integrable with respect to $ \rho $ and can grow at infinity with a suitable power-type rate; as we will see, the interval of existence $[0, T)$ may then depend on the behavior at infinity of the initial datum itself. {As concerns the space dimension, for simplicity we will only state and prove all of our results for $ N \ge 3 $, where a common strategy can be employed. Suitable technical modifications of the proofs also allow to treat the cases $ N=2 $ and $ N=1 $, with however some additional restrictions on $ \rho $: this will be discussed in detail in Remark \ref{N=2}.}

From the physical perspective the inclusion of $\rho\not\equiv1$ represents an inhomogeneity in the density of the porous medium where a certain substance, whose concentration is $u$, diffuses (whereas introducing a weight inside the divergence operator represents an inhomogeneity of the diffusion mechanism, see \cite{GMPo}). This equation (in one dimension) was proposed to model heat transfer in a non-homogeneous medium in \cite{KR}, and was thoroughly studied by Reyes and V\'azquez \cite{RV1,RV2} in higher dimensions for globally integrable data. 

Analyzing the limits of well-posedness of \eqref{wpme} for large initial data is a natural problem and has roots going back {to the pioneering works of Tychonoff and Widder for the classical heat equation \cite{Tyk,Widder1,Widder1-bis, Widder2} (especially in one spacial dimension). Widder showed that positive solutions of the heat equation are uniquely expressed as the convolution of an initial datum with the Gaussian heat kernel, while Tychonoff focused more on sign-changing solutions, proving uniqueness in the optimal growth class $ \exp\!{\left( cx^2 \right)} $, for any $ c>0 $, and exhibiting a famous counterexample of a non-zero solution taking the constant $ 0 $ as an initial datum.} As concerns \emph{nonlocal} diffusion, Barrios, Peral, Soria, Valdinoci \cite{BPSV} and Bonforte, Sire, V\'azquez \cite{BSV} were able to extend the so-called Widder theory to the fractional heat equation. In this case the optimal growth is of power-type, to some extent $ |x|^{2s-\varepsilon} $ for arbitrary $ \varepsilon>0 $.

These results on linear diffusion take advantage of an integral representation of solutions. That is, they exist up to some $ t>0 $ provided the initial datum is integrable with respect to the heat kernel at $t$. This fact offers a clue that for \emph{nonlinear} diffusion equations the optimal class of initial data may have a tight relation to the corresponding ``heat kernel'', namely the solution taking a Dirac delta at $ t=0 $ (also known in our context as the \emph{source} or \emph{Barenblatt} solution), even if clearly representation formulas generally fail. Indeed, the results achieved in this direction so far indicate a coherence with the linear theory: for $\rho\equiv1$ and $m>1$, it was shown in \cite{AC} and \cite{BCP} that the optimal class of initial data can be roughly summarized as those functions growing no faster than $ |x|^{{2}/{(m-1)}}$ (more details below), whereas it is known since \cite{Barenblatt} that for any fixed $t>0$ the source solution has the spacial profile
\[
c_1\left( c_2 - c_3 \, |x|^2 \right)_+^{\frac{1}{m-1}} ,
\]
for suitable positive constants $ \{ c_i \}_{i=1,2,3} $ (note the apparent similarity in the exponents). Furthermore, for \eqref{wpme} with the straight-power weight $ \rho(x) = |x|^{-\gamma} $, $ \gamma \in (0,2) $, the source solution is again explicit and has the spacial form (see \cite{RV1})
\[
c_1\left( c_2 - c_3 \, |x|^{2-\gamma} \right)_+^{\frac{1}{m-1}} .
\]
Comparing with the unweighted equation, this would lead to the hypothesis that the corresponding optimal class of initial data should be of order $ |x|^{{(2-\gamma)}/{(m-1)}}$ as $ |x| \to + \infty $, which we will show is indeed the case. We point out that, for globally integrable initial data, Barenblatt solutions dominate the intermediate asymptotics (we refer to \cite{V-E,RV2}), again in accordance with the linear theory. More recently, the same was shown to be true for the fractional porous medium equation as well (see \cite{Vaz14,GMPmu,GMP-dcds15}). 

Let us briefly describe the main results obtained by \cite{BCP} in the case that $\rho\equiv 1$, by which our weighted well-posedness theory is inspired. There, it is shown that if the initial datum satisfies the average-growth condition
\begin{equation}\label{q52}
	\sup_{R\geq 1} R^{-\frac 2{m-1} -N } \int_{B_R} \left|u_0(x)\right|  dx  < + \infty \, , 
\end{equation} 
then a \emph{very weak} solution $u$ to \eqref{wpme} does exist, at least up to a certain time $ T=T(u_0)>0 $ depending on $u_0$. In addition, for any real parameter $ \alpha $ fulfilling
\begin{equation*}
	\alpha > \frac 1 {m-1} + \frac N 2 \, ,
\end{equation*}
it turns out that $ t \mapsto u(t)$ is a continuous curve down to $t=0$ with values in the space of locally integrable functions $ f $ such that 
\[
\int_{\mathbb R^N} \left|f(x)\right|  \left(1+{|x|^2} \right)^{-\alpha}  dx  < + \infty \, . 
\]
Moreover, they prove that $u$ is the \emph{unique} solution in the following sense: if $v$ is another very weak solution to \eqref{wpme} such that, for every $\epsilon>0$,  
\begin{equation}\label{L-inf}
v \left(1+ |x|^2\right)^{-\frac 1{m-1}} \in L^\infty\!\left(\mathbb R^N\times (\epsilon, T)\right)  , 
\end{equation}
and $ t \mapsto v(t) $ is continuous in $ L^1_{\mathrm{loc}}\!\left( \R^N \right) $, then $v=u$. Exploiting some subtle results from \cite{AC}, it is also observed that the class of initial data that they consider is optimal for the existence of (positive) solutions, since initial traces must necessarily comply with \eqref{q52}. 

We mention that in the proof of existence in \cite{BCP} it is used in a crucial way the fact that  the unique nonnegative solution $u$ of the approximate problem
\[
\begin{cases}
	u_t = \Delta\!\left(u^m\right) & \text{in } \mathbb R^N \times (0,+\infty) \, , \\
	u = u_0 \in L^1\!\left(\mathbb R^N\right) \cap L^\infty\!\left(\mathbb R^N\right)  & \text{on } \mathbb R^N \times \{ 0\} \, ,
\end{cases}
\]
with $ u_0 \ge 0  $, satisfies the so-called \emph{Aronson-B\'enilan} inequality \cite{AB}, that is 
\begin{equation}\label{q54}
	\Delta\!\left( u^{m-1} \right) \geq - \frac {m-1}{m} \, \frac{N}{(m-1)N +2} \, \frac 1 t  \qquad \text{in   } \mathcal{D}' \!\left(\mathbb{R}^N \times (0,+\infty) \right) .
\end{equation}
Using \eqref{q54}, a quantitative local smoothing effect is deduced, which is central in the proof of existence and in fact shows that the constructed solution satisfies \eqref{L-inf}. Unfortunately, estimate \eqref{q54} seems to be strictly related to the Euclidean unweighted setting, and a weighted analogue is not available (it was actually extended to Riemannian manifolds in \cite{VV}, but under nonnegative Ricci curvature). However, it is not difficult to prove that  nonnegative solutions of problem \eqref{wpme}, corresponding to bounded and integrable (w.r.t.~$ \rho $) initial data (namely approximate solutions), satisfy 
\begin{equation}\label{m36}
 \rho u_t = \Delta\!\left( u^m \right) \geq - \frac{\rho u}{(m-1) t} \qquad \text{in  } \mathcal{D}' \!\left(\mathbb{R}^N \times (0,+\infty) \right)  .
\end{equation}
For $ \rho \equiv 1 $ the above estimate is known after \cite{BC}  as \emph{B\'enilan-Crandall} inequality, and its proof essentially relies on a pure time-scaling argument which ensures its validity in greater generality (for instance it holds on Riemannian manifolds regardless of curvature assumptions). Using \eqref{m36}, we are still able to prove that any approximate solution $u$ satisfies a suitable \emph{a priori} local $ L^\infty $-bound entailing that, for every $ \epsilon>0 $, 
\begin{equation*} 
u \left(1+ |x|^{2}\right)^{-\frac {2-\gamma}{2(m-1)}} \in L^\infty\!\left(\mathbb R^N\times (\epsilon, T)\right)  .
\end{equation*}
Such a bound only depends on $ u_0 $ through the quantity 
\begin{equation}\label{L-inf-weight}
 \left\| u_0 \right\|_{1,1} := \sup_{R\geq 1} {R^{-\frac{2-\gamma}{m-1} -N+\gamma }} \int_{B_R} \left| u_0(x) \right| \rho(x) \, dx  \, ,
\end{equation}
which is the correct analogue of \eqref{q52}, and is key to guarantee that the finiteness of $ \left\| u_0 \right\|_{1,1} $ is enough for existence, in accordance with the unweighted theory of \cite{BCP} (see Theorem \ref{Existence}). We find it remarkable that our results can be proved by relying solely on \eqref{m36} and not on the much stronger, and not available here, \emph{Aronson-B\'enilan} inequality. Despite the significant technical challenges, our weighted large-data theory shows consistency with the fact that \emph{slowly decaying} densities, as is $ |x|^{-\gamma} $ when $ \gamma \in (0,2) $,  tend to exhibit a Euclidean-like behavior, which had already been observed in \cite{RV1,RV2} for integrable data. This is not the case when $ \gamma>2 $, see \cite{KRV}. 

{Here the restriction $ \gamma<2 $ appears at several points, the most important of which is the validity of a weighted Sobolev-type inequality, see the proofs of Propositions \ref{pro2} and \ref{l4}. We stress that this is false even in the borderline case $ \gamma=2 $, as only a Poincar\'e-type inequality is available for $ \rho(x) = |x|^{-2} $ (i.e.~Hardy's inequality). Furthermore, note that if $ \gamma \ge  2 $ it is not clear whether a class of ``growing data'' is allowed at all (formula \eqref{L-inf-weight} does not suggest any guess in this sense).}

{The proof of uniqueness is based on a well-established duality method, originally introduced by Hilbert and then adapted to nonlinear diffusion equations of the type treated here in \cite{ACP, Pierre} (see also \cite[Section 6.2]{Vaz07})}. In order to implement such a method in \cite{BCP}  new difficulties, absent in \cite{ACP}, had to be overcome. They were mainly due to the fact that the initial datum $u_0$, and so the solution, belongs to a class of unbounded functions. In our framework (Theorem \ref{thmuniq}), additional technical issues present themselves with respect to \cite{BCP}, due to the lack of regularity and possible singular behavior of $\rho(x)$ as $ |x| \to 0 $. This requires approximating $ \rho $ with smooth sequences and solving dual problems on ``punctured'' balls, carefully estimating normal derivatives. For more details we refer to the proof of Proposition \ref{prop-uniq}, which is at the core of the strategy. 

Although we cannot prove optimality of our set of initial data in the strong form of \cite{AC,BCP}, we show that: 
\begin{itemize}
	\item Any initial datum growing slower than the critical rate leads to \emph{global-in-time} existence (Theorem \ref{Existence}), that is $ T=+\infty $;
	\item Special initial data growing at the critical rate exhibit \emph{blow-up  in a finite time} $T$ (Theorem \ref{blowupthm}), with a sharp bound on $T$.
\end{itemize} 
We believe that this gives very clear evidence that our space of initial data is the best possible, especially keeping in mind the initial-trace optimality result for $\rho\equiv1$ contained in \cite[Theorem 4.1]{AC}. However, their proof strongly relies on a delicate Harnack estimate \cite[Theorem 3.1]{AC} (now referred to as \emph{Aronson-Caffarelli} estimate) which is unknown in the weighted framework, and the techniques of \cite{AC} do not seem applicable. 

\medskip

Finally, we mention that in a second part of this project, which is being carried out in \cite{MPQ2}, we investigate the long-time asymptotics of solutions to \eqref{wpme} corresponding to a certain class of non-integrable initial data, in the spirit of \cite{KU}. To this purpose, the well-posedness theory set up here is of fundamental importance.

\medskip 

The paper is organized as follows. In Section \ref{prelim} we introduce some functional-analytic preliminaries and set notations, we give the precise definition of solution to problem \eqref{wpme}, and then we state our main results. Existence of solutions and key \emph{a priori} estimates are shown in Section \ref{sect-existence}. We devote Section \ref{uniqueness} to the proof of uniqueness via the duality method. In Section \ref{blowup} we analyze a related elliptic equation and prove blow-up results. Finally, in Appendix \ref{global in time} we collect some auxiliary functional facts.

\section{Preliminary material and statements of the main results} \label{prelim}

In this section we first set notations and introduce the basic functional tools needed to address the well-posedness problems, then we state and comment our main results. 

\subsection{Functional setting}\label{sf} 
Throughout, we will make use of some functional spaces with respect to a measurable weight $\rho$ satisfying the two-sided pointwise bound \eqref{weight-cond}
for some $\gamma \in [0,2)$ and constants $ k,K>0 $ with $k<K$, where the dimension $ N $ is assumed to be $ \ge 3 $. In order to define such spaces, for any $ r \ge 1 $ and $ f \in L^1_{\mathrm{loc}}\!\left(\mathbb{R}^N,\rho\right) $ we set
\begin{equation*}\label{m6}
	\left\| f \right\|_{1, r}:= \sup_{R\geq r} {R^{-\frac{2-\gamma}{m-1} -N+\gamma }} \int_{B_R} \left| f(x) \right| \rho(x) \, dx  \, ,
\end{equation*}
where $m>1$, $ B_R $ stands for the (open) ball of radius $ R $ centered at the origin and, for $ p \in [1,\infty) $, 
\begin{equation}\label{lp-weight}
\begin{gathered}
L^p\!\left(\mathbb{R}^N,\rho\right) := \left\{ f \ \text{measurable}: \ \left\| f \right\|_{L^p\left( \mathbb{R}^N , \rho \right)}^p :=  \int_{\mathbb{R}^N} \left| f(x) \right|^p \rho(x) \, dx < +\infty  \right\} ,
\\
L^p_{\mathrm{loc}}\!\left(\mathbb{R}^N,\rho\right) := \left\{ f \ \text{measurable}: \ \int_{B_R} \left| f(x) \right|^p \rho(x) \, dx < +\infty \, , \ \text{for all } R>0 \right\} .
\end{gathered}
\end{equation}
Given any $ f \in L^p_{\mathrm{loc}}\!\left(\mathbb{R}^N,\rho\right) $, $ p \in (1,\infty)$, we will also occasionally employ the following more general $p$-norms:
\begin{equation*}
	\left\| f \right\|_{p, r} := \sup_{R\geq r} R^{-\frac{2-\gamma}{m-1}-\frac{N-\gamma}{p}}\left(\int_{B_R} \left| f(x) \right|^p \rho(x) \, dx\right)^{\frac 1 p} . 
\end{equation*}
Finally, for any $ f \in L^\infty_{\mathrm{loc}}\!\left(\mathbb{R}^N\right) $ we set
\begin{equation}\label{m6a}
	\left\| f \right\|_{\infty, r} := \sup_{R \geq r} \frac{\left\| f \right\|_{L^\infty(B_R)}}{R^{\frac {2-\gamma}{m-1}}} \, .
\end{equation}
Notice that all of the above norms are equivalent with respect to change in $r \ge  1 $ and decrease as $r$ increases. Now we define the associated normed spaces in the typical way, that is 
 \[
X_p := \left\{ f \in L^p_{\mathrm{loc}}\!\left(\mathbb{R}^N,\rho\right) \colon \ \left\| f \right\|_{p,r} < + \infty \right\} 
\]
for $ p \in [1,\infty) $, and  
\[
X_\infty := \left\{ f \in L^\infty_{\mathrm{loc}}\!\left(\mathbb{R}^N\right) \colon \ \left\| f \right\|_{\infty,r} < + \infty \right\} .
\]
By H\"older's inequality and \eqref{weight-cond}, it is straightforward to check that $ X_{q} \hookrightarrow X_{p} $ for all $ 1 \le p < q \le \infty  $. Moreover, each $X_p $ is in fact a Banach space if endowed with any of the $ \| \cdot \|_{p,r} $ norms. In the sequel, for notational convenience, we call $X:=X_{1}$. We will show that initial data belonging to the space $X$ guarantee local well-posedness of solutions for problem \eqref{wpme}. 

Many of the important estimates below will be stated in terms of the auxiliary space $L^1\!\left(\Phi_\alpha\right)$, defined to be the set of all measurable functions $f$ on $\mathbb{R}^N$ such that 
\begin{equation*}
	\left\| f \right\|_{L^1(\Phi_\alpha)} := \int_{\mathbb{R}^N} \left|f(x) \right| \Phi_\alpha(x) \, \rho(x) \, dx <+\infty \, ,
\end{equation*}
where, for any $ \alpha>0 $,
\begin{equation}\label{def-phi-alpha}
\Phi_\alpha(x) := \left( 1+|x|^2 \right)^{-\alpha} \qquad\forall x \in \mathbb{R}^N \, .
\end{equation}
In general there is no relation between $X$ and $L^1\!\left(\Phi_\alpha\right)$, but if $ \alpha $ is sufficiently large it turns out that $ X $ is continuously embedded in $L^1\!\left(\Phi_\alpha\right)$, see Proposition \ref{technical}. This is relevant to our purposes since it is possible to approximate a function in $L^1\!\left(\Phi_\alpha\right)$ by regular and compactly supported functions, but the same is false in $X$ (the corresponding norm is too strong). 

In the estimation of the maximal time of existence up to which a solution to \eqref{wpme} is granted to exist, it will be useful to exploit the following definition:
\[
\ell(f):=\lim_{r\to+\infty} \left\| f \right\|_{1,r} \qquad \forall f \in X \, .
\]
Notice however that $ \ell(\cdot) $ is not a norm because it lacks the positivity property. With this definition in mind, we introduce the following subspace $X_0\subset X$ which, as we will show below (see Appendix \ref{global in time}), is closed and yields global-in-time solutions:
\[
X_0 := \left\{ f \in X\colon \ \ell(f)=0 \right\} .
\]

Next, in the proofs of \emph{a priori} estimates that are crucial for our strategy, it will be convenient to work with a modification of the $\norm{\cdot}_{p,r}$ norms which is obtained by considering appropriate cut-off functions. To this aim, for all $R\geq 1$, let 
\begin{equation}\label{m1}
\phi_R(x):=\phi\!\left(\frac{|x|}{R}\right) \qquad  \forall x \in M \, ,
\end{equation}
where $\phi$ is any real smooth function on $ [0,+\infty) $ satisfying
\begin{equation}\label{m52}
	0 \leq \phi \leq 1 \quad \textrm{in } [0, +\infty) \, , \qquad \phi \equiv 1 \quad \textrm{in } [0,1] \, , \qquad \phi \equiv 0 \quad \textrm{in } [2,+\infty) \, .
\end{equation}
Accordingly, let us set
\begin{equation*}\label{m7}
	\left|f\right|_{p, r}:= \sup_{R\geq r} {R^{-\frac {2-\gamma}{m-1} - \frac{N-\gamma}{p}}} \left(\int_{\mathbb{R}^N} \left|\phi_R  f\right|^p \rho\, dx \right)^{\frac  1 p} \, , \qquad \left| f \right|_{\infty, r} := \sup_{R\geq r} {R^{-\frac {2-\gamma}{m-1}}} \left\| \phi_R  f \right\|_{L^\infty \left(\mathbb{R}^N\right)} ,
\end{equation*}
where $r \ge 1 $ and $p\in[1,\infty)$. It is clear that $ | \cdot |_{p,r} $ and $ \| \cdot \|_{p,r} $ are equivalent norms on $ X_p $, as
\begin{equation}\label{m7-bis}
\left\| f \right\|_{p, r} \le \left|f\right|_{p, r} \le {2^{\left(\frac {2-\gamma}{m-1} + \frac {N-\gamma}{p} \right)}} \left\| f \right\|_{p, r} \qquad \forall f \in X_p \, . 
\end{equation}

When referring to the norms $ |\cdot|_{p,r} $ we will treat $ \phi $ as a	 fixed function once for all, thus avoiding to stress possible dependence of multiplying constants on it.

We point out that the unweighted analogues of the above spaces, on which we modeled the weighted counterparts, were first introduced in \cite{BCP} for the case of $\rho\equiv1$.

Since we will often deal with local or global $ L^p $ spaces, when we write inequalities like $  f \le g $ or identities it will be implicit that they are meant to hold \emph{almost everywhere} (unless otherwise specified). Moreover, in the integrals, explicit dependence on the variables will occasionally be omitted in order to lighten the notation.

\subsection{Exponents and recurrent symbols}\label{exp-not}
First of all, we define two exponents which will appear frequently in the exposition below: 
\begin{equation}\label{m12a}
	\lambda_1 :=\frac{N-\gamma}{(N-\gamma)(m-1)+2-\gamma}\,,
\end{equation}
\begin{equation}\label{m12b}
	\theta :=\frac{2-\gamma}{N-\gamma}\,,
\end{equation}
where $ N \ge 3 $ is the spacial dimension, $ \gamma \in [0,2) $ is related to power-type behavior required in \eqref{weight-cond} and $ m>1 $ accounts for the degeneracy of the differential equation in \eqref{wpme}. Moreover, still in \eqref{weight-cond} we have the privileged constants $ k,K $. 

Also, $c_n>0$ ($ n=1,2,\ldots $) will be unique constants that appear below in the statements of auxiliary results for ``small'' initial data $u_0\in L^1\!\left(\R^N,\rho\right) \cap L^\infty\!\left(\R^N\right)$, whereas $C_n>0$ ($ n=1,2,\ldots $) will be analogous constants appearing in the statements of our main results for ``large'' data $u_0\in X$. On the other hand, $C>0$ will denote a general constant that may change from line to line in calculations, whose precise value is immaterial.

In agreement with \eqref{lp-weight}, Lebesgue spaces with respect to a weight $\rho$, in a domain $ \Omega \subseteq \mathbb{R}^N$, will be written as $L^p(\Omega,\rho)$, where we may add $ _\mathrm{loc}$ as a subscript to indicate a \emph{local} weighted $L^p$ space. The only exception is the aforementioned space $L^1(\Phi_\alpha)$.

Finally, if $ V $ is a Banach space and $ I $ is a real interval, we will write $ C(I;V) $ to denote continuous functions with values in $V$, and $ L^p(I;V) $ to denote measurable functions $f$ with values in $V$ such that
$$
\int_I \left\| f(t) \right\|_V^p dt <+\infty \, , 
$$ 
with obvious adaptation to $ L^\infty(I;V) $. Moreover, $ W^{1,1}(I;V) $ will stand for the space of functions $ f \in L^1(I;V) $ such that also $ f_t \in L^1(I;V) $ (see \cite[Section 2]{BGa}). In the special case where $ V $ is a local Lebesgue space, e.g.~$ L^1_{\mathrm{loc}}\!\left( \mathbb{R}^N,\rho \right) $, this will mean that the same properties hold in $  B_R$ for every $ R>0 $. If $ V $ is a space of functions, e.g.~$ L^2\!\left( \R^N , \rho  \right) $, with abuse of notation we will often write $ f(t) $ in place of $ f(\cdot,t) $. 


\subsection{Existence and uniqueness results}\label{sect: exuni}

We now provide a suitable notion of solution to~\eqref{wpme}, and then establish well-posedness results for initial data and solutions belonging to the space $ X $.

\begin{den}\label{defsol}
	Let  $N\geq3$, $m>1$, $ T \in (0,+\infty ] $ and $\rho$ be a measurable function satisfying \eqref{weight-cond} with respect to some $\gamma\in[0,2)$ and $k,K>0$. Let $u_0\in L^1_{\mathrm{loc}}\!\left(\mathbb{R}^N,\rho\right)$. Then we say that a function $ u $ is a solution of problem \eqref{wpme} if 
	$$
	u \in C\!\left([0,T);L^1_{\mathrm{loc}}\!\left(\mathbb{R}^N,\rho\right)\right) , \quad u^m \in L^1_{\mathrm{loc}}\!\left( \mathbb{R}^N \times (0,T) \right) , \quad u(0) = u_0
	$$
 and 
	\begin{equation}\label{q50}
		-\int_0^T \int_{\mathbb{R}^N} u \, \phi_t \, \rho  \, dx dt = \int_0^T \int_{\mathbb{R}^N} u^m \, \Delta \phi \, dx dt
	\end{equation}
	for all $\phi\in C^\infty_c\!\left(\mathbb{R}^N\times (0, T)\right)$.
\end{den}

In the following, solutions to \eqref{wpme} will tacitly be understood in the sense described above. Note that, except for the requirement of continuity of $ u(t)$ as a function with values in $ L^1_{\mathrm{loc}}\!\left( \mathbb{R}^N , \rho \right) $,  Definition \ref{defsol} yields the typical concept of \emph{very weak} solution (or a solution in the sense of distributions). 

\smallskip 

The next result establishes existence of solutions to \eqref{wpme} for initial data $u_0$ belonging to $X$, and gives some key additional smoothing and stability estimates.

\begin{thm}[Existence]\label{Existence}
	Let  $N\geq3$, $m>1$ and $\rho$ be a measurable function satisfying \eqref{weight-cond} with respect to some $\gamma\in[0,2)$ and $k,K>0$. Let $u_0 \in X $. Then there exists a solution $ u $ of problem \eqref{wpme}  with $ T=T(u_0) $, where 
	\begin{equation}\label{def-t-u0}
		T(u_0) := \frac{C_1}{\left[ \ell(u_0) \right]^{m-1}} \quad \text{if } u_0 \in X \setminus X_0 \, , \qquad  T(u_0) := + \infty \quad \text{if } u_0 \in X_0 \, , 
	\end{equation}
	for a suitable positive constant $ C_1 $ depending only on $ N,m,\gamma,k,K $. We refer to $ u $ as the constructed solution.
	
	Furthermore, setting
	\begin{equation*}\label{maxtime}
		T_r(u_0) := \frac{C_1}{\left\| u_0 \right\|_{1,r}^{m-1}} \, , \qquad r \ge 1 \, ,
	\end{equation*}
	there exist positive constants $C_2, C_3$ depending only on $ N,m,\gamma,k,K $ such that 
	\begin{equation}\label{smoothing estimate}
		\left\| u(t) \right\|_{1,r} \leq C_2 \left\| u_0 \right\|_{1,r}  \qquad \forall t \in \left( 0 , T_r(u_0) \right) ,
	\end{equation}
	\begin{equation}\label{key estimate}
		\left\| u(t) \right\|_{\infty,r} \leq C_3 \, t^{-\lambda_1} \left\| u_0 \right\|_{1,r}^{ \theta \lambda_1} \qquad \forall t \in \left( 0 , T_r(u_0) \right) ,
	\end{equation}
	and $ u \in C\!\left( \left[ 0, T(u_0) \right) ; L^1\!\left( \Phi_\alpha \right) \right) $ provided
	\begin{equation}\label{cond-alpha}
	\alpha> \frac{2-\gamma}{2(m-1)}+\frac{N-\gamma}{2} \, . 
	\end{equation}
	
	We also have the following $L^1$-contraction results and ordering principle. If $ v $ is the constructed solution associated with another initial datum $ v_0 \in X $,  there exist a positive constant $  C_4$ depending only on $ N,m,\gamma,k,K,r,\alpha,\| u_0 \|_{1,r} , \| v_0 \|_{1,r} $ such that
		\begin{equation}\label{dependence on data 1}
	\left\| u(t)-v(t) \right\|_{L^1(\Phi_\alpha)} \leq \exp\!\left({C_4 \, t^{\theta\lambda_1}}\right) \left\| u_0-v_0 \right\|_{L^1(\Phi_\alpha)} \qquad \forall t \in \left( 0 , T_r(u_0) \wedge T_r(v_0) \right) , 
	\end{equation}
	and a positive constant $ C_5 $ depending only on $ N,m,\gamma,k,K, \| u_0 \|_{1,r} , \| v_0 \|_{1,r} $ such that
	\begin{equation}\label{dependence on data 2}
	\left| u(t)-v(t) \right|_{1,r} \leq \exp\!\left({C_5 \, t^{\theta \lambda_1}}\right) \left| u_0-v_0 \right|_{1,r} \qquad \forall t \in \left( 0 , T_r(u_0) \wedge T_r(v_0) \right) .
	\end{equation}
	Moreover, $u_0\leq v_0$ implies $u(t) \leq v(t)$ for all $  t \in \left( 0 , T(u_0) \wedge T(v_0) \right) $.   
	
	Finally, if in addition $ u_0 \in X_0 $ then $ u \in  C\!\left([0,+\infty) ; X \right) $ and 
	\begin{equation}\label{slow}
    \underset{|x| \to +\infty}{\operatorname{ess}\lim} \ |x|^{-\frac{2-\gamma}{m-1}}  \,  u(x,t) = 0 \qquad \forall t > 0 \, .
	\end{equation} 
\end{thm}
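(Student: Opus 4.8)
The plan is to construct the solution $u$ as a monotone-type limit of approximate solutions $u_{0,n}$ with bounded, integrable (w.r.t.\ $\rho$) data, exploiting the B\'enilan–Crandall inequality \eqref{m36} to obtain the a priori bounds, and then to upgrade the resulting estimates to the statements \eqref{smoothing estimate}–\eqref{key estimate} and the continuity claims. Concretely, I would first truncate and cut off $u_0$ to get $u_{0,n} \in L^1(\R^N,\rho)\cap L^\infty(\R^N)$ with $|u_{0,n}| \le |u_0|$ and $\|u_{0,n}\|_{1,r}\le\|u_0\|_{1,r}$, solve \eqref{wpme} for these (appealing to the known theory of bounded integrable data, i.e.\ the ``small data'' results with constants $c_n$), and show the corresponding solutions $u_n$ satisfy a quantitative local $L^\infty$-bound of the form appearing just before \eqref{L-inf-weight}, with the constant controlled only by $\|u_{0,n}\|_{1,r}$. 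The engine for this is \eqref{m36}: combined with a Bénilan–Crandall-type reverse smoothing and a De Giorgi / Moser-type iteration on cut-off balls (using the weighted Sobolev inequality valid for $\gamma<2$), one converts the $\|\cdot\|_{1,r}$-control into the $L^\infty$-bound \eqref{key estimate}. The time restriction $t<T_r(u_0)$ is exactly the range in which the iteration closes, because the nonlinear feedback term is absorbed only when $\|u_0\|_{1,r}^{m-1}t \le C_1$.

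Next I would pass to the limit $n\to\infty$. The a priori bound \eqref{key estimate} and the $L^1(\Phi_\alpha)$-contraction \eqref{dependence on data 1} (proved first at the level of bounded integrable data via the standard Crandall–Liggett / Bénilan contraction, then transferred) give enough compactness to extract a limit $u$ solving \eqref{q50}; the bounds \eqref{smoothing estimate}, \eqref{key estimate} are stable under this limit, and uniqueness of the limit (independence of the approximating sequence) follows from \eqref{dependence on data 1} once one checks that $u_{0,n}\to u_0$ in $L^1(\Phi_\alpha)$, which uses the embedding $X\hookrightarrow L^1(\Phi_\alpha)$ under \eqref{cond-alpha} (Proposition \ref{technical}) together with dominated convergence. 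Continuity $u\in C([0,T(u_0));L^1(\Phi_\alpha))$ then comes from equicontinuity in $t$: $W^{1,1}$-in-time regularity of the approximants (a consequence of the Bénilan–Crandall bound $\rho u_t \ge -\rho u/((m-1)t)$ plus the opposite-sign estimate from the equation and the $L^\infty$-bound) passes to the limit. The $L^1$-contraction \eqref{dependence on data 2} in the $|\cdot|_{1,r}$ seminorm and the ordering principle are obtained analogously, the ordering being immediate from comparison at the approximate level and stability under the limit.

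For the time of existence, the point is that $\|u_0\|_{1,r}$ is nonincreasing in $r$ and $\ell(u_0)=\lim_{r\to\infty}\|u_0\|_{1,r}$; so for $u_0\in X\setminus X_0$, taking $r$ large shows the local solution extends up to any $t<C_1/(\ell(u_0)+\varepsilon)^{m-1}$, hence up to $T(u_0)$ by a standard restart/gluing argument (solutions on $[0,T_r)$ with data $u(t_0)$ can be continued, and the bounds \eqref{smoothing estimate}, \eqref{key estimate} propagate the relevant norms forward). For $u_0\in X_0$, $\ell(u_0)=0$ means $\|u_0\|_{1,r}\to0$, so $T_r(u_0)\to+\infty$ and the solution is global. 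For the final two claims I would invoke the results of Appendix \ref{global in time}: $X_0$ is closed and approximable by nicer data, so $u(t)\in X_0$ for all $t$ (the $\ell=0$ property is preserved by \eqref{smoothing estimate} applied with $r\to\infty$), giving $u\in C([0,+\infty);X)$; and \eqref{slow} is precisely the pointwise translation of $u(t)\in X_0$ combined with the local $L^\infty$-smoothing, i.e.\ $\|u(t)\|_{\infty,r}\to0$ as $r\to\infty$ forces the essential limsup of $|x|^{-(2-\gamma)/(m-1)}u(x,t)$ to vanish.

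The main obstacle I expect is the derivation of the quantitative $L^\infty$-smoothing \eqref{key estimate} from only the weak Bénilan–Crandall inequality \eqref{m36} rather than the Aronson–Bénilan inequality \eqref{q54}: the latter directly controls $\Delta(u^{m-1})$ and plugs into classical arguments, whereas \eqref{m36} only gives a one-sided bound on $u_t$, so one must run a more delicate localized energy/iteration scheme on balls $B_R$, carefully tracking the dependence of constants on $R$ and on $\|u_0\|_{1,r}$, and exploiting the weighted Sobolev inequality (which is where $\gamma<2$ is essential) to close the Moser iteration. Making the constants $C_2,C_3$ depend only on $N,m,\gamma,k,K$ — uniformly in $R$ and in the approximation — is the technically heaviest part and is presumably carried out in Propositions \ref{pro2} and \ref{l4}.
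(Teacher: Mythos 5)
Your proposal follows essentially the same route as the paper: truncate $u_0$ to $u_{0n}\in L^1(\R^N,\rho)\cap L^\infty(\R^N)$, derive the $\|\cdot\|_{1,r}$- and $\|\cdot\|_{\infty,r}$-bounds via the B\'enilan--Crandall inequality plus a Moser iteration using the weighted Sobolev inequality (Propositions~\ref{pro2} and~\ref{l4}), use the $L^1(\Phi_\alpha)$-contraction (Proposition~\ref{continuous dependence estimates}) together with $X\hookrightarrow L^1(\Phi_\alpha)$ (Proposition~\ref{technical}) to extract a limit and identify it uniquely, and then pass the estimates, the ordering and the $X_0$-claims to the limit. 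Two small points, however, where the paper's argument is cleaner and where your sketch, as written, would need a fix: (i) continuity of $u$ at $t=0$ with values in $L^1(\Phi_\alpha)$ should not be sought through $W^{1,1}$-in-time equicontinuity of the approximants, because the Lipschitz-in-time bound coming from B\'enilan--Crandall degenerates like $1/t$ as $t\to 0^+$ and does not propagate down to the origin; instead, it is automatic from the uniform Cauchy property $u_n\to u$ in $C([0,S];L^1(\Phi_\alpha))$ that the contraction estimate gives, since each $u_n$ is continuous at $t=0$ and uniform limits of continuous curves are continuous. (ii) Extending the constructed solution from $[0,T_r(u_0))$ to $[0,T(u_0))$ by a restart/gluing argument is risky at this stage, since verifying that the restarted solution agrees with the original on the overlap implicitly invokes the uniqueness result (Theorem~\ref{thmuniq}), which has not yet been proved; the paper avoids this entirely by noting that the approximating sequence $\{u_n\}$ does not depend on $r$, so the limits built for different $r$ coincide automatically and their union covers $[0,T(u_0))$.
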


Under suitable assumptions, which are fulfilled by the constructed solutions of Theorem \ref{Existence}, we can also establish uniqueness of solutions whose initial data belong to $X$.
\begin{thm}[Uniqueness]\label{thmuniq} 
	Let  $N\geq3$, $m>1$, $ T \in (0,+\infty] $ and $\rho$ be a measurable function satisfying \eqref{weight-cond} with respect to some $\gamma\in[0,2)$ and $k,K>0$. Let $ u $ and $ v $ be any two solutions of problem \eqref{wpme}, corresponding to the same initial datum $u_0\in X$, such that
	\begin{equation}\label{thmuniq-hp} 
	u,v \in L^\infty_{\mathrm{loc}}((0,T);X_\infty) \, , \quad u,v \in L^\infty_{\mathrm{loc}}([0,T);X) \, .
	\end{equation}
	Then $u = v$. 
\end{thm}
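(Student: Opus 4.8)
The natural route is the \emph{duality method} of \cite{ACP,Pierre}, in the unbounded large-data form developed in \cite{BCP}, adapted to the present weighted and low-regularity setting; the technical core would be a key proposition (cf.\ Proposition \ref{prop-uniq}).

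\emph{Step 1: reduction and the duality identity.} Set $w:=u-v$ and define $a(x,t):=(u^m-v^m)/(u-v)$ where $u\neq v$ and $a:=0$ otherwise, so that $a\ge 0$. By \eqref{thmuniq-hp}, the bound $u,v\in L^\infty_{\mathrm{loc}}((0,T);X_\infty)$ makes $a$ locally bounded on $\mathbb R^N\times(0,T)$ with the growth $\|a(t)\|_{L^\infty(B_R)}\lesssim R^{2-\gamma}$, locally uniformly in $t>0$; in particular $a/\rho\lesssim|x|^2$ at infinity, which is the critical rate. Subtracting the two copies of \eqref{q50}, $w$ solves $\rho\,w_t=\Delta(a\,w)$ in $\mathcal D'(\mathbb R^N\times(0,T))$. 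Fix $T_0\in(0,T)$, $0<\tau<T_0$ and $\psi\in C^\infty_c(\mathbb R^N)$, which we may take nonnegative. One would like a ``dual'' test function $\varphi$ on $[\tau,T_0]$ solving $\rho\,\varphi_t+a\,\Delta\varphi=0$ with $\varphi(T_0)=\psi$; after justifying, by a density argument, that $t\mapsto\int\rho\,w(t)\,\varphi(t)\,dx$ is absolutely continuous, the equation for $w$ gives
\[
\int_{\mathbb R^N}\rho\,w(T_0)\,\psi\,dx-\int_{\mathbb R^N}\rho\,w(\tau)\,\varphi(\tau)\,dx=\int_\tau^{T_0}\!\!\int_{\mathbb R^N}w\left(\rho\,\varphi_t+a\,\Delta\varphi\right)dx\,dt=0 ,
\]
whence $\int\rho\,w(T_0)\,\psi=\int\rho\,w(\tau)\,\varphi(\tau)$; letting $\tau\downarrow0$ and using $w(0)=0$ in $L^1_{\mathrm{loc}}(\mathbb R^N,\rho)$ would force $\int\rho\,w(T_0)\,\psi=0$ for every such $\psi$, hence $w(T_0)=0$, and then $u\equiv v$ by continuity and arbitrariness of $T_0$.

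\emph{Step 2: threefold regularization and a priori estimates.} Since the dual problem is degenerate (as $a$ may vanish), posed on the whole space, and has coefficients that are merely measurable and possibly singular at the origin, it must be regularized. The plan is to: (i) mollify $a$ into smooth $a_n$ with $\varepsilon_n\le a_n$, $a_n\to a$, preserving the local upper bounds; (ii) replace $\rho$ by smooth, strictly positive $\rho_n\to\rho$ obeying \eqref{weight-cond} with uniform constants; (iii) solve $\rho_n(\varphi_n)_t+a_n\Delta\varphi_n=0$ on the \emph{punctured balls} $(B_R\setminus\overline{B_\delta})\times(\tau,T_0)$ with terminal datum $\psi$ and homogeneous Dirichlet conditions on $\partial B_R$ and $\partial B_\delta$. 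In the variable $T_0-t$ this is a uniformly parabolic forward problem with smooth data, hence classically solvable, and the maximum principle yields $0\le\varphi_n\le\|\psi\|_{L^\infty}$. Testing the equation against a suitably weighted multiple of $-\Delta\varphi_n$ produces a \emph{uniform weighted energy estimate}
\[
\int_\tau^{T_0}\!\!\int_{\mathbb R^N}\omega\,|\Delta\varphi_n|^2\,dx\,dt\le C(\psi) ,
\]
for an appropriate power-type weight $\omega$ built from $\rho$ and $(1+|x|^2)$ and tuned to the scaling $a/\rho\sim|x|^2$; one also derives a power-type decay of $\varphi_n$ at infinity. It is exactly here that the critical exponent $(2-\gamma)/(m-1)$ and the structure of the space $X$ enter.

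\emph{Step 3: passing to the limit.} With $\varphi=\varphi_n$ the duality identity on the punctured ball reads, using $\rho_n(\varphi_n)_t=-a_n\Delta\varphi_n$,
\[
\int\rho\,w(T_0)\,\psi-\int_{B_R\setminus B_\delta}\rho\,w(\tau)\,\varphi_n(\tau)=\int_\tau^{T_0}\!\!\int_{B_R\setminus B_\delta}w\,\Delta\varphi_n\left[(a-a_n)+\frac{a_n(\rho_n-\rho)}{\rho_n}\right]+\mathcal B_{R,\delta},
\]
where $\mathcal B_{R,\delta}$ collects the boundary integrals on $\partial B_R$ and $\partial B_\delta$, each involving the normal derivative of $\varphi_n$. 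By Cauchy--Schwarz, the uniform bound on $\omega^{1/2}\Delta\varphi_n$ and the membership $w\in L^\infty_{\mathrm{loc}}((0,T);X_\infty)$, $w\in L^\infty_{\mathrm{loc}}([0,T);X)\hookrightarrow L^\infty_{\mathrm{loc}}([0,T);L^1(\Phi_\alpha))$ (the embedding granted by Proposition \ref{technical} under \eqref{cond-alpha}), the bracketed factor is dominated and tends to $0$ pointwise, so the volume error vanishes by dominated convergence as $n\to\infty$, $R\to\infty$, $\delta\to0$; the outer boundary term vanishes thanks to the decay of $\varphi_n$ and $\nabla\varphi_n$ far from $\operatorname{supp}\psi$, and the inner one through the careful estimate of $\partial_\nu\varphi_n$ on the shrinking sphere $\partial B_\delta$. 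Finally, since $w(\tau)\to0$ in $L^1_{\mathrm{loc}}(\mathbb R^N,\rho)$ as $\tau\downarrow0$ while $\sup_\tau\|w(\tau)\|_{1,r}<\infty$ by \eqref{thmuniq-hp}, the term $\int\rho\,w(\tau)\,\varphi_n(\tau)$ can be made arbitrarily small, uniformly in $n$, by choosing $\tau$ small — splitting into a bounded region, where $L^1_{\mathrm{loc}}$-convergence applies, and a far region, controlled by the decay of $\varphi_n$ against the uniform $X$-bound on $w(\tau)$. One concludes $\int\rho\,w(T_0)\,\psi=0$ for all $\psi\in C^\infty_c(\mathbb R^N)$, whence $w(T_0)=0$ (recall $\rho>0$ a.e.\ by \eqref{weight-cond}), and therefore $u\equiv v$.

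The main obstacle is the last regularization layer, i.e.\ the low regularity and possible singularity of $\rho$ at the origin, absent in \cite{BCP}: it prevents solving the dual problem on balls centered at $0$ (uniform parabolicity would fail, as $\rho_n$ may behave like $|x|^{-\gamma}$) and forces the punctured-domain approximation, together with \emph{quantitative} bounds showing that the normal-derivative flux of $\varphi_n$ through $\partial B_\delta$ is negligible as $\delta\to0$ --- a delicate point since the coefficients degenerate there, and one where a variable-coefficient energy estimate introduces extra terms involving $\nabla\rho_n$ that must be absorbed. A second, more structural difficulty is to choose the weight $\omega$ so that it is at once strong enough to control $\Delta\varphi_n$ uniformly over the expanding domains and weak enough that $w$ --- which grows like $|x|^{(2-\gamma)/(m-1)}$ --- remains square-integrable against $\omega^{-1}$ after multiplication by the $|x|^{2-\gamma}$-type factor carried by $a$; the compatibility of these two requirements is the analytic content underlying the sharpness of the class $X$.
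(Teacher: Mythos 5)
Your Steps 1--3 reproduce, essentially faithfully, the machinery of the paper's Proposition \ref{prop-uniq}: the duality identity, the threefold regularization ($a_n$, $\rho_n$, punctured balls), the maximum principle, the weighted energy bound on $\Delta\xi_n$ (the paper's weight is exactly $a_n/\rho_n$), and the barrier estimates for the normal derivatives on $\partial B_R$ and $\partial B_\delta$. The genuine gap is in how you dispose of the initial layer, i.e.\ the term $\int\rho\,w(\tau)\,\varphi_n(\tau)\,dx$ as $\tau\downarrow 0$. Your "far region" control requires a spatial decay of $\varphi_n(\tau)$ of order $|x|^{-2\beta+\gamma}$ with a constant \emph{uniform in $\tau$}. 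But the only available decay comes from barriers of the form $\kappa\,e^{\sigma(T_0-t)}(1+|x|^2)^{-\beta+\gamma/2}$, where $\sigma$ must dominate quantities proportional to $\sup_{t\in[\tau,T_0]}\bigl(\|u(t)\|_{\infty,r}\vee\|v(t)\|_{\infty,r}\bigr)^{m-1}$ (through the upper bound on $a_n/\rho_n$). Under \eqref{thmuniq-hp} this supremum is finite for each fixed $\tau>0$ but need not stay bounded — nor even be exponentially integrable in time — as $\tau\to0$, since no quantitative smoothing rate near $t=0$ is assumed for $u$ and $v$. Hence the multiplicative constant $\kappa e^{\sigma_\tau T_0}$ can blow up faster than $\int|w(\tau)|\Phi\,\rho\,dx$ tends to zero, and the claimed uniform smallness fails. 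Note also that the order of limits forces you to send $R\to\infty$ (to kill the outer boundary term $J_n$) \emph{before} $\tau\to0$, so the clean "bounded region + far region" split is not available at the stage where you invoke it.

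This is precisely why the paper splits the argument: Proposition \ref{prop-uniq} is proved under the \emph{additional} hypothesis \eqref{e20z}, a pointwise bound $|u|\vee|v|\le C(1+|x|)^{(2-\gamma)/(m-1)}$ holding uniformly down to $t=0$ (so that all barrier constants are uniform on $[0,T]$), together with $u_0\in X_\infty$. The passage to the general statement is then done by a different mechanism: for $\epsilon>0$ one compares $u(\cdot+\epsilon)$ (which, by Lemma \ref{bounded-data} and Proposition \ref{prop-uniq}, coincides with the constructed solution starting from $u(\epsilon)\in X_\infty$) with the constructed solution $\hat u$ starting from $u_0$, via the $L^1(\Phi_\alpha)$ stability estimate \eqref{dependence on data 1}; this requires the separate, nontrivial proof that $u(\epsilon)\to u_0$ in $L^1(\Phi_\alpha)$ as $\epsilon\to0$, which the paper obtains from a Kato-type differential inequality combined with the local $L^1$ continuity at $t=0$. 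To repair your argument you would need either to import this two-step structure, or to first prove a quantitative smoothing bound for \emph{arbitrary} solutions in the class \eqref{thmuniq-hp} near $t=0$ — which is not available without already knowing they coincide with constructed solutions.
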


\subsection{Blow-up results}

In the case of the straight power $\rho(x)=|x|^{-\gamma}$ for $\gamma\in(0,2)$, there are explicit solutions to \eqref{wpme} that exhibit a blow-up behavior at arbitrarily small times, everywhere in space. More precisely, given an initial datum 
\begin{equation}\label{explicit solution data}
u_0(x)=\left(a+b\,|x|^{2-\gamma}\right)^{\frac{1}{m-1}}  ,
\end{equation}
for any parameters $a \ge 0 $ and $ b>0$, elementary computations show that the function 
\begin{equation}\label{explicit solution}
	u(x,t)=\left[ \frac{a \, T^\kappa}{(T-t)^\kappa}+\frac{\kappa}{m\,(2-\gamma)(N-\gamma)}\frac{|x|^{2-\gamma}}{(T-t)}\right]^{\frac{1}{m-1}} 
\end{equation}
is an explicit solution of problem \eqref{wpme}, provided
$$
\kappa=\lambda_1 \, (m-1) \, , \qquad T=\frac{\kappa}{m\,(2-\gamma)(N-\gamma)\,b} \, . 
$$
It is straightforward to check that it fulfills all of the requirements of Theorem \ref{thmuniq}, so we can assert that such a function is \emph{the} solution of \eqref{wpme}, which clearly blows up at $t=T$ for all $x\in\mathbb{R}^N$.  

\smallskip 
Our goal here is to find a suitable class of data, comparable to \eqref{explicit solution data}, for which the corresponding solutions blow up in finite time, for general weights satisfying \eqref{weight-cond}. We cannot expect any longer explicit profiles as in \eqref{explicit solution} that flow from a two-parameter family of initial data. Nevertheless, starting from the simple observation that for $ a=0 $ they give rise to \emph{separable solutions} we are able to prove a blow-up result that is valid for generic, strictly positive initial data growing at the critical rate and bounded above and below by certain functions to be made precise here, under the additional requirement that $ \rho $ is \emph{radial}. To this purpose, a crucial role will be played by positive (radial) solutions to the semilinear elliptic equation
\begin{equation}\label{ellip-base}
\Delta\!\left(w^m\right) = \rho \, w \qquad \text{in } \R^N  .
\end{equation}
The latter should in general be understood in a weak sense, since  $ \rho $ is \emph{a priori} not regular and can be singular at the origin. Nevertheless, the low degree of the singularity entailed by \eqref{weight-cond} and $ \gamma < 2 $ ensures that weak solutions actually belong to some $ W^{2,p}_{\mathrm{loc}}\!\left( \R^N \right)$ space for a small enough $ p>1 $, so they are in fact \emph{strong} solutions (but not classical). Below we will simply refer to them as ``solutions'', having in mind these properties. 

\smallskip 
The next proposition, dealing with \eqref{ellip-base}, is an adaptation of \cite[Lemma 5.5]{GMPjmpa}, in a certain sense simpler because we are working in $\mathbb{R}^N$ and on the other hand complicated by the inclusion of the possibly singular weight $\rho$. 

\begin{pro}\label{sol-ellip}
Let  $N\geq3$, $m>1$, $ T \in (0,+\infty) $ and $\rho$ be a {radial} measurable function satisfying \eqref{weight-cond} with respect to some $\gamma\in[0,2)$ and $k,K>0$. Then there exists a family $ \{ W_{\beta} \}_{\beta>0} \subset X_\infty $ of positive radial solutions to the semilinear elliptic equation 
\begin{equation}\label{ellip-ord}
\Delta \! \left( W_\beta^m \right)  = \frac{\rho}{T(m-1)} \, W_\beta \qquad \text{in } \R^N 
\end{equation}
such that $ W_\beta(0)=\beta $, $ W_{\beta_1} < W_{\beta_2} $ if $ \beta_1 < \beta_2 $ and 
\begin{equation}\label{ellip-ord-2} 
\frac{\underline{C}}{\left[\ell\!\left(W_\beta\right)\right]^{m-1}} \le T  \le \frac{\overline{C}}{\left[\ell\!\left(W_\beta\right)\right]^{m-1}}
\end{equation}
for some positive constants $ \underline{C} , \overline{C} $ depending only on $ N,m,\gamma,k,K $.
\end{pro}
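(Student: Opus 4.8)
The plan is to realise the family via an ODE/monotone-iteration argument and then extract the behavior at infinity that controls $\ell(W_\beta)$. Writing $W=W(r)$ with $r=|x|$ and setting $v:=W^m$, a radial strong solution of \eqref{ellip-ord} with $W(0)=\beta$ and $(W^m)'(0)=0$ is the same as a solution of the integral equation
\[
v(r)=\beta^m+\frac{1}{T(m-1)}\int_0^r s^{1-N}\!\int_0^s t^{N-1}\rho(t)\,v(t)^{1/m}\,dt\,ds=:\beta^m+\mathcal I[v](r)\,.
\]
Since $\gamma<2\le N$, the map $t\mapsto t^{N-1}\rho(t)$ is locally integrable, so $\mathcal I$ is well defined on nonnegative locally bounded functions, is monotone ($w_1\le w_2\Rightarrow \mathcal I[w_1]\le\mathcal I[w_2]$) and sends the constant $\beta^m$ above itself. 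First I would run the iteration $v^{(0)}\equiv\beta^m$, $v^{(n+1)}=\beta^m+\mathcal I[v^{(n)}]$: it is nondecreasing in $n$, and the a priori upper bound of the next paragraph keeps the $v^{(n)}$ finite on each $[0,R]$, so $v^{(n)}\uparrow v=v_\beta$, a solution increasing in $r$ with $v_\beta\ge\beta^m$ (hence bounded away from $0$, which defuses the non-Lipschitz nonlinearity $v^{1/m}$). Unwinding the integral equation gives the regularity $v_\beta=W_\beta^m\in W^{2,p}_{\mathrm{loc}}(\mathbb R^N)$ for $p>1$ small — as for \eqref{ellip-base} discussed above — so $W_\beta:=v_\beta^{1/m}$ is a positive strong solution of \eqref{ellip-ord}. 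The iteration also yields $v_{\beta_1}\le v_{\beta_2}$ for $\beta_1<\beta_2$, whence
\[
v_{\beta_2}(r)-v_{\beta_1}(r)=\big(\beta_2^m-\beta_1^m\big)+\big(\mathcal I[v_{\beta_2}](r)-\mathcal I[v_{\beta_1}](r)\big)\ge \beta_2^m-\beta_1^m>0\qquad\forall r\ge0\,,
\]
giving the strict ordering $W_{\beta_1}<W_{\beta_2}$.

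For the upper estimate at infinity I would use monotonicity of $v$ and $\rho(t)\le Kt^{-\gamma}$:
\[
v(r)\le\beta^m+\frac{v(r)^{1/m}}{T(m-1)}\int_0^r s^{1-N}\!\int_0^s Kt^{N-1-\gamma}\,dt\,ds=\beta^m+\frac{K}{(m-1)(N-\gamma)(2-\gamma)}\,\frac{r^{2-\gamma}}{T}\,v(r)^{1/m}\qquad\forall r\ge0\,.
\]
Since $v$ is increasing and necessarily unbounded (otherwise $\mathcal I[v]$ alone forces $v\to+\infty$), for large $r$ the term $\beta^m$ is absorbed and $v(r)^{(m-1)/m}\lesssim r^{2-\gamma}/T$, that is
\[
\limsup_{r\to+\infty}\,r^{-\frac{2-\gamma}{m-1}}\,W_\beta(r)\le\Big(\tfrac{\overline C_0}{T}\Big)^{\!\frac1{m-1}}\,,\qquad \overline C_0=\overline C_0(N,m,\gamma,K)\,;
\]
the same relation read at every $r\ge1$ gives $W_\beta(r)\le C\big(1+r^{(2-\gamma)/(m-1)}\big)$, so $W_\beta\in X_\infty$ (and hence $W_\beta\in X$).

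The delicate point is the matching lower estimate, which I expect to be the main obstacle. Here I would use $\rho(t)\ge k(1+t)^{-\gamma}\ge k\,2^{-\gamma}t^{-\gamma}$ on $\{t\ge1\}$ and bootstrap in the integral equation: from $v(t)^{1/m}\ge\beta$ one gets $v(r)\ge c_1\beta\,r^{2-\gamma}/T$ for $r\ge R_1$; feeding $v(t)^{1/m}\ge(c_1\beta/T)^{1/m}t^{(2-\gamma)/m}$ back in gives $v(r)\ge c_2(c_1\beta/T)^{1/m}r^{(2-\gamma)(1+1/m)}/T$ for $r\ge R_2$; and so on, the exponents being $\sigma_n=1+1/m+\cdots+1/m^{\,n-1}\uparrow\tfrac m{m-1}$, the powers of $\beta$ being $1/m^{\,n-1}\to0$ (so the $\beta$-dependence disappears in the limit), and the accumulated step constants and $T$-powers combining to $\big(c/T\big)^{1/(m-1)}$. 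The bookkeeping to be done carefully is that, although the starting radii $R_n$ grow geometrically, at radius $r$ one may still take $\sim\log r$ steps, which — since $\tfrac m{m-1}-\sigma_n=\tfrac m{m-1}m^{-n}$ and $r^{m^{-n(r)}}\to1$ along $n(r)\sim\log_4(r/R_1)$ — is enough, and that the product of the step constants stays bounded below by a positive constant depending only on $N,m,\gamma,k$ (each step constant being uniformly comparable to a fixed quantity once $s\ge2R_n$). This yields
\[
\liminf_{r\to+\infty}\,r^{-\frac{2-\gamma}{m-1}}\,W_\beta(r)\ge\Big(\tfrac{\underline c_0}{T}\Big)^{\!\frac1{m-1}}\,,\qquad \underline c_0=\underline c_0(N,m,\gamma,k)>0\,.
\]

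Finally I would conclude \eqref{ellip-ord-2} from the two one-sided bounds. Comparing $\int_{B_R}W_\beta\,\rho\,dx$ with $\int_0^R W_\beta(r)\,r^{N-1-\gamma}\,dr$ by means of $k(1+|x|)^{-\gamma}\le\rho\le K|x|^{-\gamma}$, the $\limsup$ bound gives $\limsup_{R\to+\infty}R^{-\frac{2-\gamma}{m-1}-N+\gamma}\int_{B_R}W_\beta\,\rho\,dx\le\overline C\,T^{-1/(m-1)}$, while the $\liminf$ bound gives $R^{-\frac{2-\gamma}{m-1}-N+\gamma}\int_{B_R}W_\beta\,\rho\,dx\ge\underline C\,T^{-1/(m-1)}$ for all large $R$. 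Since $\|\cdot\|_{1,r}$ is nonincreasing in $r$, $\ell(W_\beta)$ coincides with the former $\limsup$, so $\underline C\,T^{-1/(m-1)}\le\ell(W_\beta)\le\overline C\,T^{-1/(m-1)}$ with $\underline C,\overline C$ depending only on $N,m,\gamma,k,K$, which is \eqref{ellip-ord-2} after rearranging.
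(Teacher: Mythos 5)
Your overall strategy matches the paper's: set $V=W^m$, pass to the equivalent integral equation, obtain a global increasing radial solution, derive matching upper and lower bounds of order $y^{(2-\gamma)m/(m-1)}$ at infinity, and convert these into the two-sided bound on $\ell(W_\beta)$. Where you genuinely diverge is in the details. For existence you use a monotone (Perron-type) iteration from the constant $\beta^m$ instead of a local fixed-point argument plus sublinear global continuation; both work, and your choice makes the strict ordering $W_{\beta_1}<W_{\beta_2}$ immediate from $v_{\beta_2}-v_{\beta_1}\ge\beta_2^m-\beta_1^m$, whereas the paper instead subtracts the ODEs and runs a contradiction argument — your route is cleaner here. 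For the upper bound the paper integrates $(V^{(m-1)/m})'$ directly, which gives the $\limsup$ constant in one stroke; your version first produces an algebraic inequality $v\le\beta^m + c\,r^{2-\gamma}v^{1/m}/T$ and then must absorb $\beta^m$, which you correctly handle by noting $v$ is unbounded. The most significant difference is the lower bound: the paper defines the piecewise power $F(y)=y^2$ on $[0,1]$, $y^{2-\gamma}$ on $[1,\infty)$, proves a convolution-type inequality for $F^q$ valid on \emph{all} of $[0,\infty)$, and runs a clean global induction $c_{n+1}=Cc_n^{1/m}$, $p_{n+1}=p_n/m+1$ with no moving radii at all. You instead restrict the weight bound to $\{t\ge1\}$ and bootstrap on annuli with geometrically growing radii $R_n$; you correctly identify the delicate point — that at radius $r$ one has roughly $\log r$ available iterations, that $m^{-n(r)}\log r\to0$ fixes the exponent, and that the accumulated constants converge (to a limit independent of $\beta$ since $\beta^{1/m^n}\to1$) — but you leave the bookkeeping unverified. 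On inspection it does close: each step constant is uniformly comparable, and the recursion $A_{n+1}=C_0A_n^{1/m}$ still converges; so there is no gap, only more work than the paper's $F$-trick, which sidesteps the whole log-$r$ issue. One minor imprecision: you state the boundary condition as $(W^m)'(0)=0$, which fails for $\gamma\ge1$; the correct condition (encoded anyway by your integral equation) is $y^{N-1}(W^m)'(y)\to0$ as $y\to0$, so this does not affect the argument.
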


By a straightforward computation one sees that the separable function 
\begin{equation}\label{def u2}
	{U}_{\beta}(x,t):=\left(1-\frac{t}{T}\right)^{-\frac{1}{m-1}} W_\beta (x)
\end{equation}
is a solution of \eqref{wpme} with initial datum $ u_0 = W_\beta $, which clearly blows up everywhere at $ t=T $. Due to \eqref{ellip-ord-2}, this shows to some extent the optimality of the existence time identified in \eqref{def-t-u0}, because such solutions cannot be extended beyond that threshold (up to constants). Using the fact that the family $ \{ W_\beta \} $ is ordered, we can prove that the class of initial data for which blow-up occurs is actually larger. 

\begin{thm}[Blow-up]\label{blowupthm}
Let  $N\geq3$, $m>1$, $ T \in (0,+\infty) $ and $\rho$ be a {radial} measurable function satisfying \eqref{weight-cond} with respect to some $\gamma\in[0,2)$ and $k,K>0$. Let $ u_0 \in X $ fulfill
	\begin{equation}\label{blowupthmeq}
W_{\beta_1}(x) \le  u_0(x) \le  W_{\beta_2}(x) \qquad \text{for a.e. } x \in \R^N 
	\end{equation}
for some $ \beta_2 > \beta_1 >0 $. Then the corresponding solution $u$ of problem \eqref{wpme} blows up pointwise at $t=T$, in the sense that
\begin{equation}\label{ptwse-blowup}
\underset{t \to T^-}{\operatorname{ess}\lim} \ u(x,t) = + \infty \qquad \text{for a.e. } x \in \R^N \, . 
\end{equation}
Moreover, we have that
\begin{equation}\label{ellip-ord-3} 
\frac{\underline{C}}{\left[\ell(u_0)\right]^{m-1}} \le T  \le \frac{\overline{C}}{\left[\ell(u_0)\right]^{m-1}} \, .
\end{equation}
\end{thm}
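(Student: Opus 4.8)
The plan is to sandwich the solution $u$ between the two separable solutions $U_{\beta_1}$ and $U_{\beta_2}$ of \eqref{def u2} and to read off both the blow-up and the time estimate from that. First I would check that each $U_{\beta_i}$ is genuinely \emph{the} solution of \eqref{wpme} with datum $W_{\beta_i}$: since $W_{\beta_i}\in X_\infty$ by Proposition \ref{sol-ellip} and the factor $(1-t/T)^{-1/(m-1)}$ is bounded on every compact subinterval of $[0,T)$, one has $U_{\beta_i}\in L^\infty_{\mathrm{loc}}((0,T);X_\infty)\cap L^\infty_{\mathrm{loc}}([0,T);X)$, so Theorem \ref{thmuniq} identifies $U_{\beta_i}$ as the unique such solution; in particular it coincides, on its interval of existence, with the constructed solution of Theorem \ref{Existence} associated with $W_{\beta_i}$. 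The ordering principle of Theorem \ref{Existence}, applied to $W_{\beta_1}\le u_0\le W_{\beta_2}$, then yields
\[
U_{\beta_1}(\cdot,t)\le u(\cdot,t)\le U_{\beta_2}(\cdot,t)
\]
for every $t$ in the common interval of existence.

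The second step, which I expect to be the main obstacle, is to continue $u$ up to $t=T$ so that the sandwich holds on all of $(0,T)$ — this is needed even to make sense of \eqref{ptwse-blowup}, since the a priori lifespan $T(u_0)=C_1/[\ell(u_0)]^{m-1}$ from Theorem \ref{Existence} may be strictly shorter than $T$. I would argue by patching restarted constructed solutions: at any time $t_0$ below the current lifespan, $u(t_0)\in X_\infty\hookrightarrow X$ by \eqref{key estimate}, and the upper bound $u(t_0)\le U_{\beta_2}(t_0)$ together with the homogeneity of $\ell$ and \eqref{ellip-ord-2} gives
\[
\left[\ell\!\left(u(t_0)\right)\right]^{m-1}\le\left(1-\tfrac{t_0}{T}\right)^{-1}\left[\ell\!\left(W_{\beta_2}\right)\right]^{m-1}\le\frac{\overline{C}}{T-t_0}\,,
\]
so the restart time $T(u(t_0))\ge(C_1/\overline{C})(T-t_0)$ does not collapse as $t_0\uparrow T$; a standard continuation argument then produces a solution on $(0,T)$. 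Because this object is again characterized by uniqueness (Theorem \ref{thmuniq}), and because at each restart one may compare the restarted solution with datum $u(t_0)$ against the restart of the explicit solution $U_{\beta_2}$ (resp.\ $U_{\beta_1}$) with datum $U_{\beta_2}(t_0)\ge u(t_0)$ (resp.\ $U_{\beta_1}(t_0)\le u(t_0)$) via the ordering principle, the sandwich $U_{\beta_1}(\cdot,t)\le u(\cdot,t)\le U_{\beta_2}(\cdot,t)$ persists for all $t\in(0,T)$. Making this patching fully rigorous — in particular verifying that the patched object satisfies Definition \ref{defsol} on the whole of $(0,T)$ and that the comparison inequalities survive every restart — is the delicate part.

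It then remains to conclude. Since $W_{\beta_1}>0$ a.e.\ in $\R^N$, for a.e.\ $x$ we have $U_{\beta_1}(x,t)=(1-t/T)^{-1/(m-1)}W_{\beta_1}(x)\to+\infty$ as $t\to T^-$; by the lower sandwich bound this forces $\operatorname{ess}\lim_{t\to T^-}u(x,t)=+\infty$ for a.e.\ $x$, i.e.\ \eqref{ptwse-blowup}. For the two-sided time bound \eqref{ellip-ord-3}, I would use that $0<W_{\beta_1}\le u_0\le W_{\beta_2}$ a.e.\ and that each norm $\|\cdot\|_{1,r}$, hence $\ell(\cdot)$, is monotone, so that $\ell(W_{\beta_1})\le\ell(u_0)\le\ell(W_{\beta_2})$; inserting this into \eqref{ellip-ord-2} — which holds for both $\beta_1$ and $\beta_2$ — gives
\[
\frac{\underline{C}}{[\ell(u_0)]^{m-1}}\le\frac{\underline{C}}{\left[\ell(W_{\beta_1})\right]^{m-1}}\le T\le\frac{\overline{C}}{\left[\ell(W_{\beta_2})\right]^{m-1}}\le\frac{\overline{C}}{[\ell(u_0)]^{m-1}}\,,
\]
which is exactly \eqref{ellip-ord-3}.
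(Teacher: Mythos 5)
Your proposal is correct and follows essentially the same strategy as the paper: sandwich $u$ between the separable solutions $U_{\beta_1}\le u\le U_{\beta_2}$, continue $u$ past $T(u_0)$ by restarting and using that $u(t_0)\le U_{\beta_2}(t_0)$ keeps the datum in $X_\infty$ with a lifespan proportional to $T-t_0$, then read off \eqref{ptwse-blowup} from the lower barrier and \eqref{ellip-ord-3} from monotonicity of $\ell$ combined with \eqref{ellip-ord-2}. The only cosmetic difference is that you obtain the sandwich via Theorem \ref{thmuniq} plus the ordering principle of Theorem \ref{Existence}, whereas the paper invokes Lemma \ref{bounded-data} together with the comparison principle of Remark \ref{comparison}; these are equivalent routes.
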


\begin{oss}\rm \label{nonexi}
If $ u_0 $ is a locally bounded and nonnegative initial datum having at infinity a ``supercritical'' growth, that is  
$$
\lim_{|x|\to+\infty} \frac{u_0(x)}{|x|^{\frac{2-\gamma}{m-1}}} = + \infty \, ,
$$
then it is possible to show that no nonnegative very weak solution of problem \eqref{wpme} exists, in the spirit of \cite[Corollary 2.6]{GMPjmpa}. The proof, which we omit, is analogous to the one provided in such paper; essentially, it takes advantage of suitable barriers that can be constructed by reasoning as in Lemma \ref{bounded-data} and it strongly relies on our uniqueness results (i.e.~Theorem \ref{thmuniq} together with Remark \ref{comparison}).
\end{oss}

\begin{oss}\rm \label{opt-uni}
The issue of ``strong'' optimality for the class of data $X$, at least in the case of nonnegative solutions, remains open. By this, we mean showing that the sole existence of a nonnegative solution to the differential equation in \eqref{wpme} implies that the corresponding initial trace $ u_0 $ is well defined and satisfies $ \| u_0 \|_{1,r} < +\infty $. For $ \rho \equiv 1 $, it was proved by Aronson and Caffarelli in \cite[Theorem 4.1]{AC}, up to the fact that the initial trace is in general a \emph{Radon measure} (in this regard a result similar to \cite[Proposition 1.6]{BCP} could also be proved in our weighted setting). However, the techniques employed there do not seem applicable in the presence of a weight. Nevertheless, Theorem \ref{blowupthm} and Remark \ref{nonexi} suggest that such an optimality result is likely to be true also under \eqref{weight-cond}.
\end{oss}


\section{Existence}\label{sect-existence}

In the rest of the paper, to avoid repetitions, we will implicitly assume that $N\geq3$, $m>1$ and $\rho$ is a measurable function satisfying \eqref{weight-cond} with respect to some $\gamma\in[0,2)$ and $k,K>0$. 

\smallskip 

In order to prove Theorem \ref{Existence}, we follow a method originally developed in \cite{BCP} for the case of $ \rho  \equiv 1 $, which however requires nontrivial adaptations to treat a weight $ \rho $ as above.

\subsection{Outline of the strategy}
First of all, for every $n\in \mathbb N$ and $ u_0 \in X $, we consider the approximate problem
\begin{equation}\label{q10}
	\begin{cases}
		\rho u_t  =  \Delta \! \left( u^m \right) & \text{in } \mathbb{R}^N\times \mathbb{R}^+ \, , \\
		u = u_{0n} & \text{on } \mathbb{R}^N\times \{ 0 \} \, ,
	\end{cases}
\end{equation}
where $\{u_{0n}\} \subset L^1\!\left(\mathbb{R}^N,\rho\right) \cap L^\infty\!\left(\mathbb{R}^N\right)$ is a suitable sequence of initial data such that 
\begin{equation*}\label{m65}
\left|	u_{0n} \right| \leq \left| u_0 \right|  \quad  \forall  n\in \mathbb N  \, , \qquad \lim_{n \to \infty} u_{0n} = u_0 \quad \text{a.e.~in  } \mathbb{R}^N \, .    
\end{equation*}
Existence and uniqueness of a weak \emph{energy} solution $u_n$ of problem \eqref{q10} can be easily obtained by using the techniques of \cite{RV1, GMPo} (see Definition \ref{def2} and Proposition \ref{pro1}), along with an ordering principle for ordered initial data. Additionally, we can prove some \emph{a priori} estimates involving the ``large''  norms $\norm{\cdot}_{1,r}$, $\norm{\cdot}_{\infty,r}$  and stability bounds in $ X $ and in the weighted space $ L^1 (\Phi_\alpha )$, which ensure continuous dependence with respect to the initial data (see Propositions \ref{l4} and \ref{continuous dependence estimates}). Such estimates will be crucial in order to pass to the limit in \eqref{q10} and show that a solution to \eqref{wpme}, in the sense of Definition \ref{defsol}, does exist. 

\smallskip 
Before entering the most technical parts of this section, we recall the notion of weak energy solution to \eqref{wpme} for an initial datum in $L^1\!\left(\mathbb{R}^N,\rho\right) \cap L^\infty\!\left(\mathbb{R}^N\right)$, which is by now standard. 
\begin{den}\label{def2}
Let $u_0\in L^1\!\left(\mathbb{R}^N,\rho\right) \cap L^\infty\!\left(\mathbb{R}^N\right)$. Then we say that a function $ u $ is a weak energy solution of problem \eqref{wpme}, with $T=+\infty $, if 
	\begin{equation}\label{energy}
	u \in  C\!\left([0,+\infty); L^1\!\left(\mathbb{R}^N,\rho \right) \right)\cap L^\infty\!\left( \mathbb{R}^N \times (0 , +\infty) \right) , \quad \nabla u^m \in L^2_{\mathrm{loc}} \!\left((0,+\infty); L^2\!\left(\mathbb{R}^N \right) \right) ,  
	\end{equation}
$ u(0)=u_0 $ and 
	\begin{equation*}\label{energy-formulation}
	\int_0^{+\infty} \int_{\mathbb{R}^N} u\, \phi_t \, \rho \, dx dt  = \int_0^{+\infty} \int_{\mathbb{R}^N}  \left\langle \nabla u^m , \nabla \phi \right\rangle  dx dt
	\end{equation*}
	for all $\phi\in C^\infty_c\!\left( \mathbb{R}^N\times (0, +\infty) \right)$. 
\end{den}

In Subsection \ref{exiapp} we will see that the approximate problems are well posed, and the corresponding solutions enjoy good regularity properties that allow us to justify all the computations we need.   

\subsection{A key global elliptic estimate}\label{ke}

Here we state and prove a technical estimate we need in order to establish existence and the $ X $-$X_\infty$ smoothing effect. The proof is inspired from \cite[Proposition 1.3 on p.~59]{BCP}, but some nontrivial adaptations are carried out because we make a weaker hypothesis, namely \eqref{m10}, which is compatible with the B\'enilan-Crandall inequality \eqref{weak BC estimate}. This is crucial for us since in the weighted framework the analogue of the Aronson-B\'enilan inequality \eqref{q54} is in general not available, therefore we cannot require a bound from below on $ \Delta u^{m-1} $.  

\begin{pro}\label{pro2}
Let $u \in L^\infty\!\left(\mathbb{R}^N\right)$, with $ u\geq 0$. Suppose moreover that $ \nabla u^m \in L^2_{\mathrm{loc}}\!\left( \mathbb{R}^N \right) $, $ \Delta u^m \in L^1_{\mathrm{loc}}\!\left(\mathbb{R}^N\right) $ and
	\begin{equation}\label{m10} 
		\Delta u^{m} \geq - \Lambda \, \rho u  
	\end{equation}
	for some constant $\Lambda> 0$. Then there exists a positive constant $c_0$, depending only on $N,m,\gamma,k,K$, such that for all $ r \ge 1 $ we have
	\begin{equation}\label{m11}
		\left\| u \right\|_{\infty, r}^{m-1}  \leq c_0 \left( \Lambda^{\lambda_1(m-1)} \left\| u \right\|_{1,r}^{\theta \lambda_1(m-1)} + \left\| u \right\|_{1, r}^{m-1}\right).
	\end{equation}
\end{pro}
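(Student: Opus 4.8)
The plan is to combine a local smoothing effect for the porous medium equation with a bootstrap/scaling argument tailored to the weighted norms. The starting point is the observation that, after the standard time-scaling substitution $v(x,s) := u(x,\Lambda^{-1}s)$ (or equivalently rescaling $u$ itself), one may reduce to the case $\Lambda = 1$; the two terms on the right-hand side of \eqref{m11} and the exponents $\lambda_1(m-1)$ and $\theta\lambda_1(m-1)$ are precisely dictated by how $\|\cdot\|_{1,r}$ and $\|\cdot\|_{\infty,r}$ transform under the parabolic scaling $x \mapsto \mu x$ compatible with \eqref{weight-cond}. So the real content is: assuming \eqref{m10} with $\Lambda = 1$, derive $\|u\|_{\infty,1}^{m-1} \lesssim \|u\|_{1,1}^{\theta\lambda_1(m-1)} + \|u\|_{1,1}^{m-1}$, and then track the scaling to recover the $r$-dependence and the $\Lambda$-dependence.

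\textbf{Key steps.} First I would fix a ball $B_R$ and test the inequality \eqref{m10} against a suitable cutoff $\phi_R^{2p}$ (with $\phi_R$ as in \eqref{m1}) — or more precisely against a power of $u^m$ times a cutoff — to obtain a reversed-Poincaré / Caccioppoli-type estimate: roughly $\int \phi_R^{2p} |\nabla u^{q}|^2 \lesssim (\text{lower order}) + \int \phi_R u^{m} \Delta(\phi_R^{2p-1}\cdots) + \|u\|_{L^1(B_{2R},\rho)}\cdot(\text{sup of }u)$, where the last term comes from the sign condition $\Delta u^m \ge -\rho u$. This produces a gradient bound in terms of $\|u\|_{1,r}$ and (a fractional power of) $\|u\|_{\infty,r}$. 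Second, I would feed this gradient estimate into the \emph{weighted Sobolev inequality} valid for $\gamma < 2$ (the one referenced in the statement and used again in Proposition \ref{l4}), converting the $L^2$-gradient control into an $L^{2^*}$-type gain of integrability. Third, iterating this gain — a Moser-type or De Giorgi-type iteration over dyadic balls/dyadic truncations, or else a clean one-shot interpolation exploiting that $u$ is already bounded — upgrades local $L^1(\rho)$ control to local $L^\infty$ control, with the exact exponent $\lambda_1$ emerging from the dimensional balance in the Sobolev inequality (note $\lambda_1$ and $\theta$ in \eqref{m12a}–\eqref{m12b} are built exactly from $N-\gamma$ and $2-\gamma$, i.e. the "effective dimension" and "effective diffusion exponent" of the weighted problem). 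Fourth, I would carefully homogenize: apply the $R$-local estimate on each $B_R$ with $R \ge r$, multiply by the correct powers $R^{-(2-\gamma)/(m-1)}$ etc., take suprema, and check that the cutoff-modified norms $|\cdot|_{p,r}$ are comparable to $\|\cdot\|_{p,r}$ via \eqref{m7-bis}, so nothing is lost. Restoring $\Lambda$ by undoing the scaling then yields \eqref{m11} with both summands, the first dominating for small $\|u\|_{1,r}$ and the second for large.

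\textbf{Main obstacle.} The hard part will be that we only have the \emph{weak} one-sided bound \eqref{m10} (the B\'enilan–Crandall-type inequality), not the full Aronson–B\'enilan inequality $\Delta u^{m-1} \ge -C/t$; the latter would give pointwise control on $\Delta u^{m-1}$ and make the local estimate almost immediate, whereas here the negative part of $\Delta u^m$ is only controlled in an integrated, $\rho u$-weighted sense. So the reverse-Poincaré step must be done with care so that the "bad" term $\int \phi_R^{2p}\,\rho u \cdot u^{m-1}$ (arising when integrating $\Delta u^m$ against $\phi_R^{2p} u^{m-1}$) is absorbed — this is exactly where one pays with the extra additive term $\|u\|_{1,r}^{m-1}$ in \eqref{m11} and where the hypothesis $u \in L^\infty$ is used to close the estimate. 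A secondary technical point is making the iteration/interpolation respect the \emph{weighted} measure $\rho\,dx$ with only the two-sided bound \eqref{weight-cond}: one handles this by comparing $\rho$ with $|x|^{-\gamma}$ up to constants on each dyadic annulus $B_{2R}\setminus B_R$, where $|x|^{-\gamma} \sim R^{-\gamma}$, reducing to the model weight and invoking the $\gamma<2$ Sobolev inequality there. I would also need to be mildly careful near the origin (where $\rho$ may be singular), but since $\gamma < 2$ the singularity is integrable and the local $L^1(\rho)$ and energy hypotheses are enough.
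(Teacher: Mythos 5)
Your plan — test the one‑sided inequality $\Delta u^m\ge -\Lambda\rho u$ against powers of $\psi u$ to get a Caccioppoli/reverse‑Poincar\'e bound, feed that into the weighted Sobolev inequality for $\gamma<2$, run a Moser iteration with the $R$‑weighted cutoffs, and then homogenize in $R$ and $r$ — is exactly the paper's argument, and your optional reduction to $\Lambda=1$ by rescaling the amplitude of $u$ is a legitimate shortcut. One small correction: the additive term $\|u\|_{1,r}^{m-1}$ in \eqref{m11} does not come from absorbing $\int\phi_R^{2p}\rho u\cdot u^{m-1}$ (that $\Lambda\rho u$‑contribution is what produces the $\Lambda^{\lambda_1(m-1)}\|u\|_{1,r}^{\theta\lambda_1(m-1)}$ summand); it comes from the cutoff‑derivative boundary terms, which introduce $\|u\|_{\infty,r}^{m-1}$ as a coefficient in the Caccioppoli estimate and hence make the iterated bound self‑referential, a feature resolved at the end via the numerical lemma from \cite{BCP} and the standing hypothesis $u\in L^\infty$.
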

\begin{proof}
	Let $\psi\in C^\infty_c\!\left(\mathbb{R}^N\right)$, with $ \psi\geq 0$ and $ \psi^m $ at least $ C^2\!\left(\mathbb{R}^N\right) $. Thanks to the assumptions on $u$, the product formula
	\[  
	\Delta \left( \psi u \right)^m  = \psi^m \, \Delta u^m   + 2 \left\langle \nabla \psi^m , \nabla u^m \right\rangle + u^{m} \, \Delta \psi^m 	\]
holds. Hence, by using \eqref{m10}, we infer that
	\begin{equation}\label{m14}
		\Delta \left(\psi u \right)^m \geq - \Lambda \, \psi^m \, \rho u + 2 \left\langle \nabla \psi^m , \nabla u^m \right\rangle + u^{m} \, \Delta \psi^m \, .
	\end{equation}
Given any $ q \ge m $, if we multiply \eqref{m14} by $\left(\psi u\right)^q$ and integrate over $\mathbb{R}^N$ we obtain 
	\begin{equation}\label{m14a}
		\begin{aligned}
			& \, \int_{\mathbb{R}^N} \left\langle \nabla\left(\psi u\right)^q , \nabla \left(\psi u\right)^m \right\rangle dx \\
			\leq  & \, \Lambda \int_{\mathbb{R}^N} \psi^{q+m} \, u^{q+1} \, \rho \, dx - 2 \int_{\mathbb{R}^N} \left( \psi u \right)^q \left\langle \nabla \psi^m , \nabla u^m \right\rangle dx - \int_{\mathbb{R}^N} \psi^q \, u^{q+m} \, \Delta \psi^{m} \, dx \, .
		\end{aligned}
	\end{equation}
Note that this computation is justified since $\left(\psi u\right)^q \in H^1_c\!\left( \mathbb{R}^N \right) $, the symbol $ H^1_c\!\left( \mathbb{R}^N \right) $ denoting the space of $ H^1\!\left( \mathbb{R}^N \right) $ functions with compact support. Now we observe that the LHS of \eqref{m14a} can be rewritten as
	\begin{equation}\label{m15}
		\int_{\mathbb{R}^N} \left \langle \nabla \left(\psi u\right)^q , \nabla \left(\psi u\right)^m \right\rangle dx = \frac{4 m q}{(m+q)^2} \, \int_{\mathbb{R}^N} \left| \nabla \left( \psi u \right)^{\frac{q+m}2} \right|^2 dx \, .
	\end{equation}
	Furthermore, an integration by parts of the middle term in the RHS of \eqref{m14a}, plus some algebraic manipulations, yield
	\begin{equation}\label{m16}
		\begin{aligned}
			& \, - 2 \int_{\mathbb{R}^N} \left( \psi u \right)^q \left\langle \nabla \psi^m , \nabla u^m \right\rangle dx - \int_{\mathbb{R}^N} \psi^q \, u^{q+m} \, \Delta \psi^{m} \, dx  \\
			= & \,   -\frac{2m^2}{(m+q)^2} \int_{\mathbb{R}^N} \left\langle \nabla \psi^{q+m} , \nabla u^{q+m} \right\rangle - \int_{\mathbb{R}^N} \psi^q \, u^{q+m} \, \Delta \psi^{m} \, dx \\
			= & \,  \frac{m}{m+q} \int_{\mathbb{R}^N} u^m \left(u\psi\right)^q \left[ \left(m^2+mq-m+q\right) \psi^{m-2} \left| \nabla \psi \right|^2 +(m-q) \, \psi^{m-1} \, \Delta\psi\right] dx \, . 
		\end{aligned}
	\end{equation}
Again, the validity of the computation is guaranteed by the local regularity properties of $ \psi^m $ and $ u^m $. For each $ R \ge 1 $, let us pick $\psi =\phi_R$, where $\phi_R$ is defined in \eqref{m1}. It is direct to check that one can choose $\phi$ in such a way that \eqref{m52} holds, $ \phi^m \in C^2([0,+\infty))  $ and 
	\begin{equation}\label{m17}
	\phi(r)^{m-1} \left[ \left| \phi''(r) \right| + \frac{N-1}{r} \left| \phi'(r)\right| \right]  + \phi(r)^{m-2} \left|\phi'(r)\right|^2  \leq C \, \phi(r) \, \chi_{[1,2]}(r) \qquad \forall r > 0 \, ,
	\end{equation} 
	for some positive constant $C$ depending only on $ N $ and $m$. Hence, in view of the definition of $ \phi_R $ and \eqref{m17}, it follows that 
	\begin{equation}\label{m18} 
		\phi_R(x)^{m-1} \left|\Delta \phi_R(x) \right| + \phi_R(x)^{m-2} \left|\nabla \phi_R(x)\right|^2 \leq \frac{C}{R^2} \, \phi_R(x) \, \chi_{[R,2R]}(|x|) \qquad \forall x \in \mathbb{R}^N \, .
	\end{equation}
	From here on we will let $C $ denote a general positive constant depending only on $ N $, $m$ and possibly on $ \gamma $, $k$, $K$, which may change from line to line. By combining \eqref{m14a}, \eqref{m15}, \eqref{m16}, \eqref{m18} and recalling that $0\leq \phi_R \leq 1$, we infer the bound 
	\begin{equation*}\label{m19}
		\int_{\mathbb{R}^N} \left|\nabla\left(\phi_R u \right)^{\frac{q+m}2}\right|^2 dx \leq
		C \, q \left[\Lambda \, \int_{\mathbb{R}^N} \left(\phi_R u \right)^{q+1} \rho \, dx + \frac 1{R^2} \, \int_{B_{2R} \setminus B_R} \left(\phi_R u \right)^{q+1} u^{m-1} \, dx \right] ,
	\end{equation*}
	which, given assumption \eqref{weight-cond} on the weight (here we need the leftmost inequality), entails 
	\begin{equation}\label{m19-bis}
			 \int_{\mathbb{R}^N} \left|\nabla\left(\phi_R u \right)^{\frac{q+m}2}\right|^2 dx
			\leq  C \, q\left[\Lambda \int_{\mathbb{R}^N} \left(\phi_R u \right)^{q+1} \rho \, dx + \frac 1{R^{2-\gamma}} \int_{B_{2R} \setminus B_R} \left(\phi_R u \right)^{q+1} u^{m-1} \, \rho \, dx \right] ,
	\end{equation}
	whence, in view of definition \eqref{m6a}, it follows that
	\begin{equation}\label{m20}
		\int_{\mathbb{R}^N} \left|\nabla\left( \phi_R u \right)^{\frac{q+m}2}\right|^2 dx
		\leq C \, q \left(\Lambda + \left\| u \right\|_{\infty, r}^{m-1} \right) \int_{\mathbb{R}^N} \left( \phi_R u \right)^{q+1} \rho \, dx \, .
	\end{equation}
	Still as a consequence of \eqref{weight-cond} (in this case we need the rightmost inequality), we know that the weighted Sobolev inequality
	\begin{equation}\label{e12}
		\left( \int_{\mathbb{R}^N} \left| f(x) \right|^{\frac{2(N-\gamma)}{N-2}} \rho(x) \, dx \right)^{\frac{N-2}{N-\gamma}} \leq C_S \, \int_{\mathbb{R}^N} \left| \nabla f(x) \right|^2 dx \qquad \forall f\in H^1(\mathbb{R}^N)
	\end{equation}
	holds for some positive constant $C_S>0$ depending on $ N,\gamma,K $; this is a matter of pure interpolation between the standard Sobolev inequality and Hardy's inequality. By applying \eqref{e12} to $ f = (\phi_R u )^{(q+m)/2} $ and plugging it in \eqref{m20}, we end up with
	\begin{equation}\label{m21}
		\left(\int_{\mathbb{R}^N} \left( \phi_R u  \right)^{s(q+1) +b } \, \rho \, dx \right)^{\frac 1 s} \leq  C \, q \left(\Lambda + \left\| u \right\|_{\infty, r}^{m-1} \right) \int_{\mathbb{R}^N} \left(\phi_R u \right)^{q+1} \rho \, dx \, ,
	\end{equation}
	where
	\begin{equation}\label{m21-bis}
		s:= \frac{N-\gamma}{N-2} > 1 \, , \qquad b:=  s(m-1)>0 \, . 
	\end{equation}
	The idea is now to carefully iterate \eqref{m21}. To this aim, we fix any $ p_0 \ge m+1 $, set $ \vartheta_0 := \frac{2-\gamma}{m-1} + \frac{N-\gamma}{p_0} $ and define recursively the following sequences (let $ j \in \mathbb{N} $): 
	\begin{equation}\label{m22}
		\begin{cases}
			p_{j+1}  = s p_j + b \, ,  \\
 \vartheta_{j+1}  = s \, \frac{p_j}{p_{j+1}}  \, \vartheta_j \, ,  
		\end{cases}
	\end{equation}
along with the quantity	
	\begin{equation*}\label{m22-bis}
	a_j:= \frac{1}{R^{\vartheta_j p_j}} \, \int_{\mathbb{R}^N} \left(\phi_R u \right)^{p_j} \rho \, dx \, .
	\end{equation*}
Raising \eqref{m21} to the $s$-th power, with $q=p_j -1$, and multiplying both sides by $R^{-\vartheta_{j+1}p_{j+1}}$, we obtain:
	\begin{equation}\label{m23} 
		a_{j+1} \leq C^s \, p_j^s \left( \Lambda + \left\| u \right\|_{\infty, r}^{m-1} \right)^s a_j^s \, .
	\end{equation}
Upon iterating \eqref{m23} $ j $ times, we deduce that
	\begin{equation}\label{m24}
		a_{j+1}^{\frac 1{p_{j+1}}} \leq \left[C \left(\Lambda + \left\|u \right\|^{m-1}_{\infty,r}\right) \right]^{\alpha_j} M_j \, a_0^{\beta_j} \, ,
	\end{equation}
	where
	\begin{equation}\label{m24b}
		\begin{cases}
			\alpha_j := \frac{s+s^2+\ldots+s^{j+1}}{p_{j+1}} \, , & \\
			M_j := \left(p_j^s \, p_{j-1}^{s^2} \ldots p_0^{s^{j+1}}\right)^{\frac 1{p_{j+1}}}  , & \\
			\beta_j := \frac{s^{j+1}}{p_{j+1}} \, ,
		\end{cases}
	\end{equation}
	and in view of \eqref{m22} we have
	\begin{equation}\label{m25}
		p_{j+1}= sp_j + b = \left(p_0 + \frac b{s-1}\right)s^{j+1} -\frac b{s-1}\,.
	\end{equation}
From \eqref{m21-bis}--\eqref{m22} and \eqref{m24b}--\eqref{m25}, it is straightforward to verify that 
	\begin{equation}\label{m26}
		\lim_{j\to \infty} \alpha_j = \lambda_{p_0} \, , \quad \lim_{j\to \infty} \beta_j = \frac{2-\gamma}{N-\gamma} \, \lambda_{p_0} \, , \quad \lim_{j\to \infty} \vartheta_j = \frac{2-\gamma}{m-1} \, , \quad \limsup_{j\to \infty} M_j =: \overline{M} \in (0,+\infty) \, , 
	\end{equation}
	where
	\begin{equation*}\label{def-lambda}
		\lambda_{p_0} := \frac{N-\gamma}{(N-\gamma)(m-1)+(2-\gamma)p_0}
	\end{equation*} 
	and $ \overline{M} $ only depends on $ N, m, \gamma, p_0 $. By virtue of \eqref{m26} we can let $ j \to \infty $ in \eqref{m24} to obtain 
	\begin{equation*}\label{m27}
		\frac{\left\| \phi_R u \right\|_{L^\infty \left(\mathbb{R}^N\right)}}{R^{\frac{2-\gamma}{m-1}}} \leq C \left(\Lambda + \left\| u \right\|_{\infty, r}^{m-1}\right)^{\lambda_{p_0}} \left( R^{-\frac{2-\gamma}{m-1} p_0 - N + \gamma} \int_{\mathbb{R}^N} (\phi_R u)^{p_0} \, \rho \, dx \right)^{\frac{2-\gamma}{N-\gamma} \, \lambda_{p_0}} ,
	\end{equation*}  
where we admit that $ C $ also depends on $ p_0 $. Therefore, if we take the supremum of both sides over $ R \ge r $ we end up with
	\begin{equation}\label{m27-bis}
		\left\| u \right\|_{\infty, r} \leq C \left(\Lambda + \left\| u \right\|_{\infty, r}^{m-1}\right)^{\lambda_{p_0}} \left\| u \right\|_{p_0 , r}^{\frac{2-\gamma}{N-\gamma} \, \lambda_{p_0} \, p_0} ,
	\end{equation}  
recalling the equivalence of the norms $ |\cdot|_{p,r} $ and $ \| \cdot \|_{p,r} $ (see \eqref{m7-bis}). Letting
	$$ 
	A := \left\| u \right\|^{m-1}_{\infty, r} ,
	$$
	estimate \eqref{m27-bis} reads 
	\begin{equation}\label{m28}
		A^{\frac 1{m-1}} \leq C \left(\Lambda + A\right)^{\lambda_{p_0}} \left\| u \right\|_{p_0 , r}^{\frac{2-\gamma}{N-\gamma} \, \lambda_{p_0} \, p_0} .
	\end{equation}
	A simple interpolation estimate yields 
	$$
	\left\| u \right\|_{p_0,r}^{p_0} \le A^{\frac{p_0-1}{m-1}} \, B \, , \qquad B := \left\| u \right\|_{1,r} ,
	$$
so that \eqref{m28}	entails
$$
A^{\frac 1{m-1}} \leq C \, B^{\frac{2-\gamma}{N-\gamma} \, \lambda_{p_0}} \left(\Lambda + A\right)^{\lambda_{p_0}} A^{\frac{2-\gamma}{N-\gamma} \, \frac{p_0-1}{m-1} \, \lambda_{p_0}} \, ,
$$
which is equivalent to 
\begin{equation*}\label{m28-bis}
A^{ \frac{N-\gamma}{2-\gamma} + \frac{1}{m-1} } \leq C^{\frac{N-\gamma}{(2-\gamma)\lambda_{p_0}}} \, B \left( \Lambda + A \right)^{\frac{N-\gamma}{2-\gamma}} .
\end{equation*}
We are therefore in position to apply the numerical result provided in \cite[Lemma 1.3]{BCP}, which ensures the existence of another constant $ C>0 $ depending only on $N,m,\gamma,k,K,p_0$ such that
\begin{equation}\label{m28-ter}
A \leq C  \left( \Lambda^{\frac{(N-\gamma)(m-1)}{(N-\gamma)(m-1)+2-\gamma}}   \, B^{\frac{(2-\gamma)(m-1)}{(N-\gamma)(m-1)+2-\gamma}}  + B^{m-1} \right)  .
\end{equation}
By undoing  $ A , B$ and recalling \eqref{m12a}--\eqref{m12b}, we realize that \eqref{m28-ter} is precisely \eqref{m11}. 
\end{proof}

\subsection{Approximate problems and \emph{a priori} estimates}\label{exiapp}

In this subsection we investigate in detail the main properties of weak energy solutions, and establish the crucial global estimates that we will need to prove existence. 

\begin{pro}\label{pro1}
	Let $u_0\in L^1\!\left(\mathbb{R}^N,\rho \right) \cap L^\infty\!\left(\mathbb{R}^N\right)$. Then there exists a unique weak energy solution $u$ of problem \eqref{wpme}, in the sense of Definition \ref{def2}. Moreover, we have that
	\begin{equation}\label{m37a}
	\left\| u(t) \right\|_{L^\infty\left(\mathbb{R}^N\right)} \leq \left\| u_0 \right\|_{L^\infty\left(\mathbb{R^N}\right)} \qquad \forall t\geq 0 \, ,
	\end{equation}
	and if $ v $ is the weak energy solution corresponding to another initial datum $v_0\in L^1\!\left(\mathbb{R}^N,\rho \right) \cap L^\infty\!\left(\mathbb{R}^N\right)$ then the following $ L^1 $-contraction estimate holds: 
	\begin{equation}\label{m37}
	\left\| u(t) - v(t) \right\|_{L^1\left(\mathbb{R}^N,\rho\right)} \leq \left\|u_0 - v_0 \right\|_{L^1\left(\mathbb{R}^N,\rho\right)}  \qquad \forall t\geq 0 \, .
	\end{equation}
	If in addition $ v_0 \le u_0 $ also the corresponding solutions are ordered, that is $v \le u$. In particular, $ u_0 \ge 0 $ implies $ u \ge 0 $, and  in such case the  inequality
	\begin{equation}\label{weak BC estimate}
	 \rho u_t \geq - \frac{\rho u}{(m-1) t} \qquad \text{in   } \mathcal{D}' \!\left(\mathbb{R}^N \times (0,+\infty) \right) 
	 	\end{equation}
	holds. Finally, solutions are strong, in the sense that 
		\begin{equation}\label{weak-strong}
	u_t \in L^\infty_{\mathrm{loc}}\! \left( (0,+\infty) ; L^1\!\left(\mathbb{R}^N,\rho\right) \right) . 
	\end{equation}
\end{pro}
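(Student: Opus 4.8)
The plan is to build $u$ through the by-now standard combination of approximation on bounded domains and nonlinear-semigroup theory, following \cite{RV1,GMPo}. First I would solve \eqref{wpme} on each ball $B_n$ with homogeneous Dirichlet data and datum $u_0$; these problems admit unique weak energy solutions (by the usual regularisation-and-compactness argument, the weighted Sobolev inequality \eqref{e12} providing the energy bounds and hence the compactness — this is where $\gamma<2$ enters), which moreover satisfy a classical maximum principle and an $L^1(B_n,\rho)$-contraction. Monotonicity in $n$ together with uniform energy estimates allows one to pass to the limit $n\to\infty$ and obtain a function $u$ fulfilling all the requirements of Definition \ref{def2}; uniqueness then comes from the $L^1(\mathbb{R}^N,\rho)$-contraction proved below (or, abstractly, from the $m$-accretivity of $w\mapsto-\rho^{-1}\Delta(w^m)$ in $L^1(\mathbb{R}^N,\rho)$). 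The bound \eqref{m37a} is inherited from the Dirichlet approximations, for which $\pm\|u_0\|_{L^\infty(\mathbb{R}^N)}$ are a super- and a sub-solution by the maximum principle, and is preserved in the limit.

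Next, the $L^1$-contraction \eqref{m37} and the comparison principle $v_0\le u_0\Rightarrow v\le u$ I would obtain by the classical Kato-inequality argument: testing the equation for $u-v$ against a regularisation of $\sign(u-v)$ (resp.\ $\sign^+(u-v)$) multiplied by the cut-off $\phi_R$ of \eqref{m1}, using that $s\mapsto|s|^{m-1}s$ is nondecreasing, and integrating by parts twice onto $\phi_R$, one controls $\tfrac{d}{dt}\int_{\mathbb{R}^N}|u-v|\,\rho\,\phi_R\,dx$ by an error term bounded by $\tfrac{C}{R^2}\int_{B_{2R}\setminus B_R}|u^m-v^m|\,dx$. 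Since $u,v$ are bounded by \eqref{m37a} one has $|u^m-v^m|\le C|u-v|$ a.e., and $\rho^{-1}\le C\,R^\gamma$ on $B_{2R}\setminus B_R$ by \eqref{weight-cond}, so this error is $\le\tfrac{C}{R^{2-\gamma}}\int_{B_{2R}\setminus B_R}|u-v|\,\rho\,dx$, which tends to $0$ as $R\to\infty$ (again because $\gamma<2$) since $u-v\in L^1(\mathbb{R}^N,\rho)$. Integrating in time and letting $R\to\infty$ yields \eqref{m37}; the variant with $(u-v)^+$ gives the comparison statement, and taking $v\equiv0$ gives $u\ge0$ whenever $u_0\ge0$.

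For the B\'enilan–Crandall inequality \eqref{weak BC estimate}, with $u_0\ge0$, I would exploit the scaling invariance of \eqref{wpme}: for every $\lambda>0$ the function $u_\lambda(x,t):=\lambda^{1/(m-1)}u(x,\lambda t)$ is again the weak energy solution, now with initial datum $\lambda^{1/(m-1)}u_0$. For $\lambda\ge1$ we have $u_\lambda(\cdot,0)\ge u_0$, so the comparison principle gives $\lambda^{1/(m-1)}u(x,\lambda t)\ge u(x,t)$ a.e.; testing the difference quotient $(\lambda^{1/(m-1)}u(x,\lambda t)-u(x,t))/(\lambda-1)$ against a nonnegative $\phi\in C^\infty_c(\mathbb{R}^N\times(0,\infty))$, rescaling time in the first term, and letting $\lambda\downarrow1$ (dominated convergence, using $u\in C([0,\infty);L^1(\mathbb{R}^N,\rho))$) produces, after the usual reparametrisation of the test function, precisely the distributional inequality \eqref{weak BC estimate}. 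Finally, strongness \eqref{weak-strong} follows from the same scaling together with the homogeneity of the operator (of degree $m\neq1$): applying the $L^1(\mathbb{R}^N,\rho)$-contraction to $u_\lambda$ and $u$ gives $\|\lambda^{1/(m-1)}u(\lambda t)-u(t)\|_{L^1(\mathbb{R}^N,\rho)}\le|\lambda^{1/(m-1)}-1|\,\|u_0\|_{L^1(\mathbb{R}^N,\rho)}$; dividing by $\lambda-1$, letting $\lambda\to1$ and using \eqref{m37} yields $\|u_t(t)\|_{L^1(\mathbb{R}^N,\rho)}\le\tfrac{2}{(m-1)t}\,\|u_0\|_{L^1(\mathbb{R}^N,\rho)}$ for $t>0$, whence $u_t\in L^\infty_{\mathrm{loc}}((0,\infty);L^1(\mathbb{R}^N,\rho))$ (equivalently one may directly invoke the B\'enilan–Crandall regularising theorem for homogeneous accretive operators \cite{BC}).

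The step I expect to be the real work is the first one: running the approximation scheme for a merely measurable weight that may be singular at the origin — in particular extracting enough compactness from \eqref{e12} to pass to the limit, and then checking that the limit is a weak energy solution with $\nabla u^m\in L^2_{\mathrm{loc}}((0,\infty);L^2(\mathbb{R}^N))$ and with the claimed $L^1(\mathbb{R}^N,\rho)$-valued continuity down to $t=0$. Once that framework is in place the remaining assertions are soft; the only additional care needed is the rigorous passage $\lambda\to1$ in the scaling arguments behind \eqref{weak BC estimate} and \eqref{weak-strong}.
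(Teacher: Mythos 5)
Most of your outline runs parallel to the paper's actual proof: existence and uniqueness via the standard approximation scheme of \cite{RV1,GMPo}, the bound \eqref{m37a}, the $L^1$-contraction \eqref{m37} and the comparison principle, and the B\'enilan--Crandall inequality \eqref{weak BC estimate} obtained by the time-scaling/comparison trick are all handled in the paper essentially as you describe. The genuine gap is in your proof of \eqref{weak-strong}. The scaling-plus-contraction argument does give, as in the paper,
\[
\left\| u(t+h)-u(t)\right\|_{L^1\left(\mathbb{R}^N,\rho\right)} \le \frac{2\left\|u_0\right\|_{L^1\left(\mathbb{R}^N,\rho\right)}}{(m-1)\,t}\left(h+o(h)\right),
\]
i.e.\ local Lipschitz continuity of $t\mapsto u(t)$ with values in $L^1\!\left(\mathbb{R}^N,\rho\right)$. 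But when you ``divide by $\lambda-1$ and let $\lambda\to1$'' to conclude $\|u_t(t)\|_{L^1}\le \tfrac{2}{(m-1)t}\|u_0\|_{L^1}$, you are presupposing that the difference quotients converge in $L^1$ --- which is exactly what has to be proved. Since $L^1\!\left(\mathbb{R}^N,\rho\right)$ is not reflexive (it fails the Radon--Nikodym property), a locally Lipschitz curve with values in it need not be differentiable almost anywhere; the paper flags precisely this point, citing \cite{Mik}. The fallback you mention, the B\'enilan--Crandall regularising theorem \cite{BC}, only yields the difference-quotient bound above, not $L^1$-differentiability; in the classical unweighted theory strongness is ultimately extracted from the Aronson--B\'enilan inequality, which is exactly the tool that is unavailable here.

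The paper closes this gap with an additional ingredient: for bounded weak energy solutions the energy estimates give $\left(u^{\frac{m+1}{2}}\right)_t\in L^2_{\mathrm{loc}}\!\left((0,+\infty);L^2\!\left(\mathbb{R}^N,\rho\right)\right)$ (see \cite[Lemma 3.3]{GMPo}), hence $u^{\frac{m+1}{2}}\in W^{1,1}_{\mathrm{loc}}$ in time with values in $L^1_{\mathrm{loc}}\!\left(\mathbb{R}^N,\rho\right)$. Combining this with the Lipschitz estimate, the B\'enilan--Gariepy theorem \cite{BGa} yields $u\in W^{1,1}_{\mathrm{loc}}\!\left((0,+\infty);L^1_{\mathrm{loc}}\!\left(\mathbb{R}^N,\rho\right)\right)$, and the Lipschitz bound then upgrades this to \eqref{weak-strong}. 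You need this step (or an equivalent substitute) for the last assertion of the proposition; the rest of your proposal is sound.
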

\begin{proof}
	Under the running assumptions on $ \rho $ and $ u_0 $, the construction (and uniqueness) of a weak energy solution satisfying \eqref{m37a}--\eqref{m37} is by now rather standard and can be obtained by means of various approximation methods: we refer e.g.~to \cite[Theorem 3.1]{RV1} or to \cite[Proposition 6 and Theorem 3.4]{GMPo}, see also \cite[Theorems 2.1 and 2.2]{DQRV} for a fractional (unweighted) porous medium problem and \cite[Theorems 4.4 and 4.5]{GMP-dcds15} in the weighted case. The fact that in such works the weight is locally at least continuous is not an issue, as \eqref{weight-cond} guarantees that $ \rho $ can in turn be properly approximated by smooth functions. 
	
	As concerns the crucial estimate \eqref{weak BC estimate} for nonnegative solutions, which is originally due to B\'enilan and Crandall \cite{BC}, as observed in \cite[Theorem 3.7]{KRV} it is a matter of a formal time-scaling argument that can be easily proved for approximate solutions. The passage to the limit is then straightforward since the inequality is understood in the distributional sense. We refer to \cite[Lemma 8.1]{Vaz07} for the details.
	
The most delicate property to prove is \eqref{weak-strong}. To this aim, one can argue similarly to \cite[Theorem 8.2 and Corollary 8.3]{DQRV}. More precisely, thanks to \eqref{m37}, using the same time-scaling trick as in \cite[Lemma 8.5]{Vaz07} we have
\begin{equation}\label{strong-1}
\left\| u(t+h) - u(t) \right\|_{L^1\left(\mathbb{R}^N,\rho\right)} \le \frac{2 \left\|u_0  \right\|_{L^1\left(\mathbb{R}^N,\rho\right)}  }{(m-1)t} \left( h + o(h) \right)
\end{equation}
for all $ t,h>0 $, where $ o(h) $ is independent of $ t $ provided the latter ranges in compact subsets of $ (0,+\infty) $. Estimate \eqref{strong-1} actually shows that $ t \mapsto u(t) $ is locally Lipschitz in $ L^1\!\left( \mathbb{R}^N , \rho \right) $, but since the latter is not a reflexive space, this is not enough to claim that it is almost everywhere differentiable (see e.g.~\cite{Mik}). On the other hand, the fact that $u$ is a weak and bounded energy solution entails \cite[Lemma 3.3]{GMPo}
\begin{equation*}\label{strong-2}
\left( u^{\frac{m+1}{2}} \right)_t \in L^2_{\mathrm{loc}}\!\left( (0,+\infty) ; L^2\!\left( \mathbb{R}^N , \rho \right) \right) ,
\end{equation*}
so that in particular 
\begin{equation}\label{strong-3}
u^{\frac{m+1}{2}}  \in W^{1,1}_{\mathrm{loc}}\!\left( (0,+\infty) ; L^1_{\mathrm{loc}}\!\left( \mathbb{R}^N , \rho \right) \right) .
\end{equation}
In view of \eqref{strong-1} and \eqref{strong-3}, we are in position to apply \cite[Theorem 1.1 and Lemma 2.1]{BGa}, which ensures that $ u \in  W^{1,1}_{\mathrm{loc}}\!\left( (0,+\infty) ; L^1_{\mathrm{loc}}\!\left( \mathbb{R}^N , \rho \right) \right) $, but since $ u \in L^\infty\!\left((0,+\infty); L^1\!\left( \mathbb{R}^N , \rho \right)\right) $ upon letting $ h \to 0 $ in \eqref{strong-1} we end up with \eqref{weak-strong}.
\end{proof}

In the following two propositions we show that if $u_0\in L^1\!\left(\mathbb{R}^N,\rho\right)\cap L^\infty\!\left(\mathbb{R}^N\right)$ then estimates \eqref{smoothing estimate}, \eqref{key estimate}, \eqref{dependence on data 1} and \eqref{dependence on data 2} hold on the corresponding solution $u$ of \eqref{wpme}. These results are the weighted analogues of \cite[Lemma 1.4 and subsequent proofs]{BCP}. To this purpose, a crucial role is played by \eqref{m11}. In Subsection \ref{proof-exi} we will then use such estimates to prove general existence, and extend their validity to solutions whose initial data belong to $X$ by truncating $ u_0 $ and passing to the limit in the corresponding approximate solutions. 

\begin{pro}\label{l4} 
		Let $u_0\in L^1\!\left(\mathbb{R}^N,\rho \right) \cap L^\infty\!\left(\mathbb{R}^N\right)$ and $ u $ be the weak energy solution of problem \eqref{wpme}, in the sense of Definition \ref{def2}. Then there exist  positive constants $c_1 , c_2, c_3 $, depending only on $N,m,\gamma,k,K$, such that for all $ r \ge 1 $ we have
	\begin{equation}\label{contr-L1}
		\left\| u(t) \right\|_{1,r} \le c_2 \left\| u_0 \right\|_{1,r} \qquad \forall t \in \left( 0 , \tfrac{c_1}{ \left\| u_0 \right\|_{1,r}^{m-1}}  \right)  
	\end{equation}
	and the smoothing estimate 
	\begin{equation}\label{smooth-parab}
		\left\| u(t) \right\|_{\infty, r}  \leq c_3 \,  t^{-\lambda_1} \left\| u_0 \right\|_{1,r}^{\theta \lambda_1} \qquad \forall t \in \left( 0 , \tfrac{c_1}{ \left\| u_0 \right\|_{1,r}^{m-1}}  \right)  
	\end{equation}
	holds.
\end{pro}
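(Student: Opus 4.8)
The plan is to first reduce to nonnegative data, then derive a parabolic differential inequality for a cut-off version of the $\norm{\cdot}_{1,r}$-norm, feed into it the elliptic smoothing bound \eqref{m11} of Proposition \ref{pro2} (which is applicable thanks to the B\'enilan--Crandall inequality \eqref{weak BC estimate}), and finally close the two estimates by a Gr\"onwall/continuity argument whose scaling is dictated by the algebraic identity $\lambda_1(\theta+m-1)=1$. To begin, I would observe that it suffices to treat $u_0\ge0$: if $\widetilde u$ denotes the weak energy solution with datum $|u_0|$, the ordering principle in Proposition \ref{pro1} gives $-\widetilde u\le u\le\widetilde u$, hence $|u|\le\widetilde u$; since $\norm{|u_0|}_{1,r}=\norm{u_0}_{1,r}$ and $\norm{|u_0|}_{L^\infty(\R^N)}=\norm{u_0}_{L^\infty(\R^N)}$, the bounds for $\widetilde u$ transfer to $u$. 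So in what follows $u_0\ge0$ and $u\ge0$.

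Next I would set, for $R\ge1$, $f_R(t):=R^{-\frac{2-\gamma}{m-1}-N+\gamma}\int_{\R^N}\phi_R\,u(t)\,\rho\,dx$, so that $\sup_{R\ge r}f_R(t)=|u(t)|_{1,r}$. Since $u$ is a strong solution, $t\mapsto\int_{\R^N}\phi_R\,u(t)\,\rho\,dx$ is locally absolutely continuous on $(0,+\infty)$ and continuous up to $t=0$, and using the equation followed by two integrations by parts one gets $\frac{d}{dt}\int_{\R^N}\phi_R\,u\,\rho\,dx=\int_{\R^N}u^m\,\Delta\phi_R\,dx$ for a.e.~$t$ (for sign-changing $u$ one would instead invoke Kato's inequality and the usual regularization of the sign function, discarding the resulting good term). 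On $B_{2R}\setminus B_R$ one has $|\Delta\phi_R|\le C R^{-2}$, $1\le C R^{\gamma}\rho$ by the left inequality in \eqref{weight-cond}, and $u^{m-1}\le\norm{u(t)}_{L^\infty(B_{2R})}^{m-1}\le C R^{2-\gamma}\norm{u(t)}_{\infty,r}^{m-1}$; inserting these, dividing by $R^{\frac{2-\gamma}{m-1}+N-\gamma}$ and integrating in time gives $f_R(t)\le f_R(0)+C\int_0^t\norm{u(s)}_{\infty,r}^{m-1}f_{2R}(s)\,ds$, and since $f_{2R}(s)\le|u(s)|_{1,r}$ for $R\ge r$, taking the supremum over $R\ge r$ yields
\[
|u(t)|_{1,r}\le|u_0|_{1,r}+C\int_0^t\norm{u(s)}_{\infty,r}^{m-1}\,|u(s)|_{1,r}\,ds .
\]

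Then I would bring in Proposition \ref{pro2}. By \eqref{weak BC estimate}, for a.e.~$s>0$ the slice $u(s)$ satisfies $\Delta u(s)^m\ge-\tfrac{1}{(m-1)s}\,\rho\, u(s)$ in $\mathcal D'(\R^N)$, and by \eqref{energy}--\eqref{weak-strong} it enjoys the regularity required there; applying \eqref{m11} with $\Lambda=\tfrac1{(m-1)s}$ and using the norm equivalence \eqref{m7-bis} gives $\norm{u(s)}_{\infty,r}^{m-1}\le C\big(s^{-\lambda_1(m-1)}h(s)^{\theta\lambda_1(m-1)}+h(s)^{m-1}\big)$, where $h(s):=|u(s)|_{1,r}$. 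Since $\lambda_1(m-1)<1$, substituting this into the previous display produces the closed inequality
\[
h(t)\le M+C\int_0^t\Big(s^{-\lambda_1(m-1)}\,h(s)^{1+\theta\lambda_1(m-1)}+h(s)^{m}\Big)\,ds,\qquad M:=|u_0|_{1,r},
\]
whose integrand is a priori integrable on bounded intervals because $h$ is bounded ($\norm{u(t)}_{L^1(\R^N,\rho)}\le\norm{u_0}_{L^1(\R^N,\rho)}$ by \eqref{m37}). A continuity/bootstrap argument then finishes things off: as long as $h\le2M$ on $[0,t]$ one estimates the right-hand side, and with $t=c\,M^{1-m}$ the identity $\lambda_1(\theta+m-1)=1$ makes the two correction terms scale exactly like $c^{\,1-\lambda_1(m-1)}M$ and $c\,M$, so for $c=c_1$ small enough $h(t)\le\tfrac32M<2M$; hence $h\le2M$ on all of $[0,c_1M^{1-m})$, which is \eqref{contr-L1} after undoing \eqref{m7-bis} and absorbing constants. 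Reinserting $h(t)\le c_2\norm{u_0}_{1,r}$ into the elliptic estimate above, and noting that for $t<c_1\norm{u_0}_{1,r}^{1-m}$ the term $h(t)^{m-1}$ is dominated by $t^{-\lambda_1(m-1)}h(t)^{\theta\lambda_1(m-1)}$ (because $\tfrac1{\lambda_1}=m-1+\theta$), delivers \eqref{smooth-parab}.

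The hard part will be the exponent bookkeeping in this last continuity argument: one must verify that the threshold $c_1\norm{u_0}_{1,r}^{1-m}$ is exactly where the two nonlinear terms of the integral inequality become critical, which hinges on $\lambda_1(\theta+m-1)=1$. A secondary, purely technical issue is the rigorous justification of the time differentiation of $\int_{\R^N}\phi_R u\rho\,dx$ and — in the sign-changing reduction — of the Kato-inequality step, carried out using only the regularity \eqref{energy}--\eqref{weak-strong} furnished by Proposition \ref{pro1}.
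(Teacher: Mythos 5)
Your proposal is correct and follows essentially the same route as the paper: reduce to nonnegative data via the ordering principle, derive the cut-off integral inequality $|u(t)|_{1,r}\le|u_0|_{1,r}+C\int_0^t\|u(s)\|_{\infty,r}^{m-1}|u(s)|_{1,r}\,ds$, invoke the B\'enilan--Crandall inequality \eqref{weak BC estimate} to make Proposition \ref{pro2} applicable with $\Lambda=\frac{1}{(m-1)s}$, and close the resulting nonlinear integral inequality. The only genuine difference is the closing Gr\"onwall step: the paper compares $g(t)$ against an explicit solution $h(t)$ of the associated ODE, and then further against a solvable simplified ODE $H$ on the time range where $t\le h(t)^{-(m-1)}$; you instead run a self-consistency bootstrap (assume $h\le 2M$ on $[0,t]$, plug in, and use $\lambda_1(\theta+m-1)=1$ to verify that both correction terms scale like a small constant times $M$ when $t\le c_1 M^{1-m}$). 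Both closures are valid and yield the same threshold and constants depending only on $N,m,\gamma,k,K$; the bootstrap is a bit more elementary, while the ODE comparison makes the sharp constant $c_1$ more explicit. One small point you should make explicit for the bootstrap to be airtight: the continuity of $t\mapsto|u(t)|_{1,r}$, which the paper records as a consequence of $u\in C([0,+\infty);L^1(\mathbb{R}^N,\rho))$ together with the continuous embedding $L^1(\mathbb{R}^N,\rho)\hookrightarrow X$.
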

\begin{proof}
	With no loss of generality we can suppose that $u_0$, and therefore $ u  $, are nonnegative. In fact, thanks to the ordering principle of Proposition \ref{pro1}, we infer that $ v \leq u \le w $, where $v $ is the (nonpositive) solution of problem \eqref{wpme} with $u_0$ replaced by $-u_0^-$ and  $w $ is the (nonnegative) solution of problem \eqref{wpme} with $u_0$ replaced by $u_0^+$. Hence
	\[ 
	 \left| u \right| \leq \left| v \right| \vee w \, , \quad  \left| v(0) \right| \vee w(0) = \left| u_0 \right| ,
	\]
	so that the validity of \eqref{smooth-parab} for $ v $ and $ w $ implies the validity of the same estimate for $u$. On the other hand $ v $ coincides with (minus) the nonnegative solution to \eqref{wpme} with initial datum $ u_0^- $, thus we can indeed restrict ourselves to nonnegative solutions. 
	
	Let therefore $u$ be a weak energy solution of problem \eqref{wpme}, with $ u \ge 0 $. If $ \phi_R $ is a smooth cut-off function as in \eqref{m1}--\eqref{m52}, for every $ R \ge 1 $ we have:
	\begin{equation}\label{m32}
		\begin{aligned}
			\frac{d}{d t} \int_{\mathbb{R}^N} u(t) \, \phi_R \, \rho\, dx  = & \, \int_{\mathbb{R}^N} u_t(t) \, \phi_R \, \rho \, dx  = \int_{\mathbb{R}^N} \left[ \Delta u(t)^m \right]  \phi_R \, dx = \int_{\mathbb{R}^N} u(t)^m \,  \Delta \phi_R \, dx \\
			\leq & \, \frac{C}{R^2} \, \int_{B_{2R} \setminus B_R} u(t)^{m}\, dx \leq \frac{C}{R^{2-\gamma}} \, \int_{B_{2R}  \setminus B_R} u(t)^{m} \, \rho \, dx \, , \\
		\end{aligned}
	\end{equation}
	where we let $C $ denote a general positive constant depending only on $ N,m, \gamma, k, K$, whose explicit value may differ from line to line. Note that in the last passage we have exploited \eqref{weight-cond}, similarly to \eqref{m19-bis}. We point out that the computation is justified for a.e.~$ t>0 $ thanks to \eqref{weak-strong}. By integrating \eqref{m32}  in time, exploiting the  $ L^1 $-continuity of $ u$, we obtain: 
	\begin{equation}\label{m33} 
		\begin{aligned}
			\int_{\mathbb{R}^N} u(t) \, \phi_R \, \rho \, dx \leq & \, \int_{\mathbb{R}^N} u_0 \, \phi_R \, \rho \, dx +  \frac{C}{R^{2-\gamma}} \, \int_0^t \int_{B_{2R}} u(s)^{m} \, \rho \, dx ds \\
			 \leq & \, \int_{B_{2R}} u_0 \, \rho \, dx + C \,  \int_0^t \frac{\left\| u(s) \right\|_{L^\infty(B_{2R})}^{m-1}}{(2R)^{2-\gamma}} \left( \int_{B_{2R}} u(s) \, \rho \, dx \right) ds \, ,
		\end{aligned}
	\end{equation}
	for all $ t>0 $. If we multiply both sides of \eqref{m33} by $R^{-\frac{2-\gamma}{m-1} -N+\gamma }$ and take the supremum over $R\geq r$, we deduce that the function $g(t) := \left\| u(t) \right\|_{1, r} $ satisfies the integral inequality 
	\begin{equation}\label{m34}
		g(t) \leq C \left( g(0) + \int_0^t \left\|u(s)\right\|_{\infty, r}^{m-1} g(s) \, ds \right) \qquad \forall t >0 \, .
	\end{equation}
	Let us observe that $ g $ is continuous since $ u(t) $ is a continuous function with values in $ L^1\!\left( \mathbb{R}^N ,\rho \right) $, and the latter space is continuously embedded in $ X $, whereas from the weak$ ^\ast $ lower semicontinuity of the $ L^\infty $ norm it is not difficult to check that $ t \mapsto \left\| u(t) \right\|_{\infty,r} $ is lower semicontinuous, thus measurable. In view of \eqref{weak BC estimate}, along with \eqref{energy} and \eqref{weak-strong}, for a.e.~$ t>0 $ we have that $ \nabla u(t)^m \in L^2\!\left( \mathbb{R}^N \right) $, $ \Delta u(t)^m \in L^1 \! \left( \mathbb{R}^N \right) $ and 
		\begin{equation*}\label{m10-time} 
	\Delta u(t)^{m} \geq - \frac{1}{(m-1)t} \, \rho u(t)  \, .
	\end{equation*}
	As a result, Proposition \ref{pro2}  is applicable, yielding
	\begin{equation}\label{m40}
		\left\| u(t) \right\|_{\infty, r}^{m-1}  \leq C \left( t^{-\lambda_1(m-1)} \left\| u(t) \right\|_{1,r}^{ \theta \lambda_1(m-1)} + \left\| u(t) \right\|_{1, r}^{m-1}\right) , 
	\end{equation}
	which can be plugged in \eqref{m34} to obtain 
	\begin{equation}\label{m41}
		g(t) \leq C g(0) +C  \, \int_0^t \left( s^{-\lambda_1(m-1)} \, g(s)^{\theta \lambda_1(m-1) + 1 } + g(s)^{m} \right) ds \, .
	\end{equation}
	Let us now introduce the solution to the Cauchy problem 
	\begin{equation}\label{m41-bis}
		\begin{cases}
			h'(t) =C \left[ t^{-\lambda_1(m-1)} \, h(t)^{\theta \lambda_1(m-1) + 1 } + h(t)^{m} \right] , \\
			h(0) =C g(0) \, , 
		\end{cases}
	\end{equation}
	which is smooth for $ t>0 $ and continuous down to $ t=0 $, up to the time $ \tau $ at which it blows up. Since the nonlinearities involved in the differential equation are locally Lipschitz and increasing, from \eqref{m41}--\eqref{m41-bis} it is not difficult to deduce a comparison principle which entails
	\begin{equation}\label{e3z}
	g(t) \leq h(t) \qquad \forall t \in (0,\tau) \, .
	\end{equation}
If $t$ is so small that 
	\begin{equation}\label{e5z}
		 t \leq \frac{1}{h(t)^{m-1}} \, ,
	\end{equation}
	then
	\[  
	h(t)^{m} = h(t)^{\lambda_1(m-1)^2} \, h(t)^{\theta \lambda_1(m-1)+1 }  \leq t^{-\lambda_1(m-1)} \, h(t)^{\theta \lambda_1(m-1)+1 } \,. 
	\]
	Hence, still by comparison, under \eqref{e5z} we have that  
	\begin{equation}\label{e6z}
		h(t) \leq H(t) \, ,
	\end{equation}
	where $H(t)$ is the solution to 
	\begin{equation}\label{e7z}
		\begin{cases}
			H'(t) = 2C \, t^{-\lambda_1(m-1)} \, H(t)^{\theta \lambda_1(m-1) + 1 } \, , \\
			H(0) =C g(0) \, .
		\end{cases}
	\end{equation}
	In fact \eqref{e7z} has an explicit solution, defined up to a finite blow-up time: 
	\begin{equation}\label{e8z}
		H(t)= \left[ \left( C g(0) \right)^{-\theta \lambda_1(m-1) } - 2C(m-1) \,  t^{\theta \lambda_1} \right]^{-\frac{1}{\theta \lambda_1 (m-1)}} .
	\end{equation}
	From \eqref{e3z}, \eqref{e5z} and \eqref{e6z}, we can therefore infer that 
	\begin{equation}\label{e9z}
		g(t) \leq H(t)
	\end{equation}
	provided
	\begin{equation}\label{e10z}
	 t \leq \frac{1}{H(t)^{m-1}} \, .
	\end{equation}
On the other hand, in view of \eqref{e8z}, condition \eqref{e10z} is equivalent to
	\begin{equation}\label{e11z}
		t \le \frac{c_1}{g(0)^{m-1}} \, ,
	\end{equation}
	for a suitable positive constant $ c_1 $ as in the statement. 
	
	Estimate \eqref{contr-L1} is therefore a consequence of \eqref{e8z}, \eqref{e9z} and \eqref{e11z}, while estimate \eqref{smooth-parab} just follows by combining \eqref{contr-L1} and \eqref{m40}.
	%
\end{proof}

\begin{pro}\label{continuous dependence estimates}
		Let $u_0,v_0\in L^1\!\left(\mathbb{R}^N,\rho \right) \cap L^\infty\!\left(\mathbb{R}^N\right)$ and $ u,v $ be the corresponding weak energy solutions of problem \eqref{wpme}, in the sense of Definition \ref{def2}. Let $ \alpha $ satisfy \eqref{cond-alpha} and $ r \ge 1 $. Then there exist positive constants $ c_4$ and $c_5 $, with $ c_4$ depending only on $ N,m,\gamma,k,K,r,\alpha,\| u_0 \|_{1,r} , \| v_0 \|_{1,r} $ and $ c_5 $ depending only on $ N,m,\gamma,k,K,\| u_0 \|_{1,r} , \| v_0 \|_{1,r} $, such that
	\begin{equation}\label{weighted continuous dependence}
	\left\| u(t)-v(t) \right\|_{L^1(\Phi_\alpha)} \leq \exp\!\left({c_4 \, t^{\theta\lambda_1}}\right) \left\| u_0-v_0 \right\|_{L^1(\Phi_\alpha)} \qquad \forall t \in \left( 0 , T_r(u_0) \wedge T_r(v_0) \right)
	\end{equation}
	and
	\begin{equation}\label{L1 continuous dependence}
	\left| u(t)-v(t) \right|_{1,r} \leq \exp\!\left({c_5 \, t^{\theta \lambda_1}}\right) \left| u_0-v_0 \right|_{1,r} \qquad \forall t \in \left( 0 , T_r(u_0) \wedge T_r(v_0) \right) , 
	\end{equation}
where $ T_r(f):=c_1/\left\| f \right\|_{1,r}^{m-1} $, the constant $ c_1 $ being the same as in Proposition \ref{l4}. 
\end{pro}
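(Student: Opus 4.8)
The plan is to run a singular Grönwall argument built on the differential form of the $L^1$-contraction (Kato's) inequality, using the local smoothing estimate \eqref{smooth-parab} to control the nonlinearity. Write $w:=u-v$; since weak energy solutions are strong in the sense of \eqref{weak-strong}, $w$ satisfies $\rho\,w_t=\Delta(u^m-v^m)$ in $L^1_{\mathrm{loc}}$ for a.e.\ $t>0$. First I would establish the pointwise-in-time inequality
\begin{equation*}
\frac{d}{dt}\int_{\mathbb{R}^N}\rho\,|w(t)|\,\zeta\,dx\le\int_{\mathbb{R}^N}\bigl|u(t)^m-v(t)^m\bigr|\,|\Delta\zeta|\,dx
\end{equation*}
for a.e.\ $t>0$ and every $0\le\zeta\in C^2_c(\mathbb{R}^N)$. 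This is the standard scheme: multiply the equation by $p_\varepsilon(u^m-v^m)\,\zeta$, with $p_\varepsilon$ a smooth nondecreasing approximation of $\sign$ vanishing at $0$, integrate over $\mathbb{R}^N$, integrate by parts twice (the Dirichlet term $-\int p_\varepsilon'(u^m-v^m)|\nabla(u^m-v^m)|^2\zeta$ is $\le 0$ and is discarded), and let $\varepsilon\to0$, exploiting $\sign(w)=\sign(u^m-v^m)$. The identity $\frac{d}{dt}\|w(t)\|_{L^1(\zeta\rho)}=\int\sign(w(t))\,w_t(t)\,\zeta\,\rho\,dx$ for a.e.\ $t$ is justified by \eqref{weak-strong}.

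For \eqref{weighted continuous dependence} I take $\zeta=\Phi_\alpha$ (taking $\zeta=\Phi_\alpha\phi_R$ and letting $R\to\infty$ to deal with the non-compact support). Since $|\Delta\Phi_\alpha(x)|\le C_\alpha\,\Phi_{\alpha+1}(x)$, and since the mean value theorem together with \eqref{smooth-parab} gives, for $|x|<R$ and $R\ge r$,
\begin{equation*}
\bigl|u(t,x)^m-v(t,x)^m\bigr|\le m\bigl(\|u(t)\|_{L^\infty(B_R)}\vee\|v(t)\|_{L^\infty(B_R)}\bigr)^{m-1}|w(t,x)|\le C\,r^{2-\gamma}\,\bigl(1+|x|^2\bigr)^{\frac{2-\gamma}{2}}t^{-\lambda_1(m-1)}N_0^{\theta\lambda_1(m-1)}|w(t,x)|
\end{equation*}
with $N_0:=\|u_0\|_{1,r}\vee\|v_0\|_{1,r}$ (optimizing $R\simeq\max\{r,|x|\}$), I combine these using $\Phi_{\alpha+1}(x)(1+|x|^2)^{(2-\gamma)/2}=\Phi_{\alpha+\gamma/2}(x)\le C\,\Phi_\alpha(x)\rho(x)$, which follows from the leftmost bound in \eqref{weight-cond}. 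This yields $F'(t)\le C\,r^{2-\gamma}N_0^{\theta\lambda_1(m-1)}\,t^{-\lambda_1(m-1)}F(t)$ for $F(t):=\|w(t)\|_{L^1(\Phi_\alpha)}$. Because $2-\gamma>0$ forces $\lambda_1(m-1)<1$ with $1-\lambda_1(m-1)=\theta\lambda_1$, so that $\int_0^t s^{-\lambda_1(m-1)}ds=t^{\theta\lambda_1}/(\theta\lambda_1)$, integrating the (singular) Grönwall inequality and using that $F$ is continuous down to $t=0$ (as $u,v\in C([0,\infty);L^1(\mathbb{R}^N,\rho))$ and $\Phi_\alpha$ is bounded) gives \eqref{weighted continuous dependence}.

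For \eqref{L1 continuous dependence} I repeat the argument with $\zeta=\phi_R$, $R\ge r$, setting $G_R(t):=\int\phi_R|w(t)|\rho\,dx$. Using $|\Delta\phi_R|\le CR^{-2}\chi_{B_{2R}\setminus B_R}$, the bound on $|u^m-v^m|$ on $B_{2R}$, and $\rho(x)\ge C R^{-\gamma}$ on $B_{2R}\setminus B_R$ (again from \eqref{weight-cond}), the $R$-powers cancel ($R^{-2}\cdot R^{2-\gamma}\cdot R^{\gamma}=1$) and one finds $G_R'(t)\le C\,t^{-\lambda_1(m-1)}N_0^{\theta\lambda_1(m-1)}G_{2R}(t)$. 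Bounding $G_{2R}(t)\le(2R)^{\frac{2-\gamma}{m-1}+N-\gamma}|w(t)|_{1,r}$, integrating in time, normalizing by $R^{-\frac{2-\gamma}{m-1}-N+\gamma}$ and taking the supremum over $R\ge r$ produces the integral inequality $\Psi(t)\le\Psi(0)+C\,N_0^{\theta\lambda_1(m-1)}\int_0^t s^{-\lambda_1(m-1)}\Psi(s)\,ds$ for $\Psi(t):=|w(t)|_{1,r}$, which is finite on $(0,T_r(u_0)\wedge T_r(v_0))$ by \eqref{contr-L1}. Grönwall again closes the estimate, producing the exponent $t^{\theta\lambda_1}$ for the same arithmetic reason.

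The hard part will be making the Kato/$L^1$-contraction step fully rigorous in the present setting: regularizing $\sign$ and passing to the limit $\varepsilon\to0$ in the presence of the (possibly singular) weight $\rho$, and especially handling the non-compact support of $\Phi_\alpha$ — for this one first works with $\Phi_\alpha\phi_R$ and lets $R\to\infty$, the extra terms coming from $\nabla\phi_R$ and $\Delta\phi_R$ vanishing precisely because $|w(t)|\Phi_\alpha\rho\in L^1(\mathbb{R}^N)$, i.e.\ they are tails of a convergent integral. The remainder is careful bookkeeping of the exponents, the key algebraic identity being $1-\lambda_1(m-1)=\theta\lambda_1$, which is exactly what turns the time integral into the factor $t^{\theta\lambda_1}$ appearing in \eqref{weighted continuous dependence}--\eqref{L1 continuous dependence}.
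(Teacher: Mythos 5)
Your proposal is correct and follows essentially the same route as the paper's proof: a sign/Kato inequality to reduce $\frac{d}{dt}\int |u-v|\zeta\rho\,dx$ to $\int |u^m-v^m||\Delta\zeta|\,dx$, the $L^\infty$ smoothing estimate \eqref{smooth-parab} converted via the $\|\cdot\|_{\infty,r}$ definition into a pointwise bound $|u^m-v^m|\lesssim(1+|x|^2)^{(2-\gamma)/2}t^{-\lambda_1(m-1)}N_0^{\theta\lambda_1(m-1)}|u-v|$, absorption of the extra power of $(1+|x|^2)$ into $\rho$ via the left-hand bound of \eqref{weight-cond}, and a singular Grönwall inequality exploiting $1-\lambda_1(m-1)=\theta\lambda_1$. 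The only cosmetic differences are that you sketch the regularization of $\sign$ explicitly where the paper simply invokes Kato's inequality (justified by the strong-solution property \eqref{weak-strong}), and your pointwise bound via $R\simeq\max\{r,|x|\}$ is the paper's inequality \eqref{eq1} in slightly different clothing.
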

\begin{proof}
First of all we observe that $ \Delta u(t)^m, \Delta v(t)^m \in L^1\!\left( \mathbb{R}^N \right) $ for almost every $ t>0 $, since solutions are strong. Hence Kato's inequality is applicable (see \cite{K}), yielding
	\begin{equation}\label{sign trick}
		\int_{\mathbb{R}^N} \phi_R \, \sign(u(t)-v(t)) \, \Delta\!\left[u(t)^m-v(t)^m\right] dx\leq \int_{\mathbb{R}^N} \left( \Delta\phi_R \right) \left|u(t)^m-v(t)^m\right| dx \, ,
	\end{equation} 
where $ \phi_R $ is a smooth cut-off function as in \eqref{m1}--\eqref{m52} (let $ R \ge r $). In order to prove \eqref{L1 continuous dependence}, using \eqref{sign trick}, \eqref{weight-cond}, \eqref{smooth-parab} and the differential equation itself we obtain: 
	\begin{equation}\label{sign trick-2}
	\begin{aligned}
	& \,	\frac{d}{dt}\int_{\mathbb{R}^N}\phi_R \left|u(t)-v(t)\right|\rho \, dx \\
	 = & \,\int_{\mathbb{R}^N} \phi_R \, \sign(u(t)-v(t)) \, \Delta\!\left[u(t)^m - v(t)^m \right] dx\\
	\leq & \, \int_{\mathbb{R}^N} \left(\Delta\phi_R\right) \left|u(t)^m-v(t)^m \right| dx \\
	 \leq & \, \frac{C}{R^2} \, \int_{B_{2R}\setminus B_R} \left(|u(t)|^{m-1}\vee |v(t)|^{m-1}\right) \left|u(t)-v(t)\right|dx\\
\leq & \, C \left( \frac{\left\| u(t) \right\|_{L^\infty(B_{2R})}^{m-1}}{R^2} \vee \frac{\left\| v(t) \right\|_{L^\infty(B_{2R})}^{m-1}}{R^2} \right) \int_{B_{2R}\setminus B_R} \left|u(t)-v(t)\right|dx\\
\leq & \, C \left( \frac{\left\| u(t) \right\|_{L^\infty(B_{2R})}^{m-1}}{(2R)^{2-\gamma}} \vee \frac{\left\| v(t) \right\|_{L^\infty(B_{2R})}^{m-1}}{(2R)^{2-\gamma}} \right) \int_{B_{2R}\setminus B_R} \left|u(t)-v(t)\right| \rho \, dx  \\ 
\leq & \,  C \left( \left\|u(t)\right\|_{\infty,r}^{m-1} \vee \left\|v(t)\right\|_{\infty,r}^{m-1} \right) \int_{B_{2R}} \left|u(t)-v(t)\right| \rho \, dx  \\ 
\leq & \, C \left(\left\| u_0 \right\|_{1,r}^{\theta \lambda_1(m-1)} \vee \left\| v_0 \right\|_{1,r}^{\theta\lambda_1(m-1)} \right) \frac{1}{t^{\lambda_1(m-1)}} \, \int_{B_{2R}} \left|u(t)-v(t)\right| \rho \, dx \, ,
\end{aligned}
\end{equation}
where $ t < T_r(u_0) \wedge T_r(v_0) $ and $C$ is still a positive constants that may change from line to line and only depends on $ N,m,\gamma,k,K $. Integrating in time and proceeding similarly to the proof of Proposition \ref{l4}, it is readily seen that the function $ g(t):=|u(t)-v(t)|_{1,r} $ satisfies the integral inequality 
	\begin{equation*}\label{m41-ter}
		g(t) \leq g(0) + C \left(\left\| u_0 \right\|_{1,r} \vee \left\| v_0 \right\|_{1,r} \right)^{\theta \lambda_1(m-1)}  \int_0^t s^{-\lambda_1(m-1)} \, g(s) \, ds \, ,
	\end{equation*}
for all $ t < T_r(u_0) \wedge T_r(v_0) $. To finally reach \eqref{L1 continuous dependence}, we compare $g(t)$ to the solution of the Cauchy problem 
	\begin{equation*}\label{ode4}
		\begin{cases}
			h'(t)=\kappa \, t^{-\lambda_1(m-1)} \, h(t)\, , \\
			h(0)=\left|u_0-v_0\right|_{1,r} ,
		\end{cases}
		\quad \kappa := C \left(\left\| u_0 \right\|_{1,r} \vee \left\| v_0 \right\|_{1,r} \right)^{\theta \lambda_1(m-1)}  , 
	\end{equation*}
	namely
	\begin{equation*}
		h(t)=\left|u_0-v_0\right|_{1,r} e^{\frac{\kappa}{1-\lambda_1(m-1)}\, t^{1-\lambda_1(m-1)}} \, ,
	\end{equation*}
	and note that $1-\lambda_1(m-1)=\theta\lambda_1$.
		
	We employ a similar strategy to prove (\ref{weighted continuous dependence}). The following inequality can be obtained exactly as above by (formally) replacing $\phi_R$ with the function $\Phi_\alpha$ defined in \eqref{def-phi-alpha}:
	\begin{equation}\label{midpoint-1}
		\frac{d}{dt} \int_{\mathbb{R}^N} \Phi_\alpha \left|u(t)-v(t)\right|\rho \, dx \leq\int_{\mathbb{R}^N} \left( \Delta\Phi_\alpha \right) \left|u(t)^m-v(t)^m\right| dx  \qquad \text{for a.e. } t > 0 \, .
	\end{equation}
Next, we observe that  $\Delta\Phi_\alpha$ can be computed explicitly: 
	\begin{equation}\label{Lap-Phi}
		\Delta\Phi_\alpha(x)=-2\alpha \, \frac{N+\left(N-2\alpha-2\right)|x|^2}{\left(1+|x|^2\right)^{\alpha+2}} \qquad \forall x \in \mathbb{R}^N \, ,
	\end{equation}
	whence 
	\begin{equation}\label{lap-phi}
		\left|\Delta\Phi_\alpha(x)\right| \leq C_{N,\alpha} \, \frac{\Phi_\alpha(x)}{1+|x|^2} \qquad \forall x \in \mathbb{R}^N \, ,
	\end{equation}
	where $ C_{N,\alpha}>0 $ depends only on $ \alpha $ and $ N $. In fact one can justify rigorously \eqref{midpoint-1} by means of a standard cut-off argument, having in mind these decay estimates and recalling that $ u(t),v(t) \in L^1\!\left( \mathbb{R}^N , \rho \right) \cap L^\infty\!\left( \mathbb{R}^N \right) $. Substituting \eqref{lap-phi} into \eqref{midpoint-1}, we get
	\begin{equation}\label{midpoint}
\frac{d}{dt} \int_{\mathbb{R}^N} \Phi_\alpha \left|u(t)-v(t)\right|\rho \, dx		
		\leq C_{N,\alpha} \, m \, \int_{\mathbb{R}^N}\frac{|u(t)|^{m-1}\vee |v(t)|^{m-1}}{1+|x|^2} \left|u(t)-v(t)\right|\Phi_\alpha \, dx \, ,
	\end{equation}
for almost every $ t>0 $. To continue with our analysis, we would like an inequality of the following form:
	\begin{equation}\label{eq1}
	\frac{\left|u(x,t)\right|^{m-1}}{1+|x|^2}\leq C_r \left\| u(t) \right\|_{\infty,r}^{m-1} \rho(x) \qquad \text{for a.e. } x \in \mathbb{R}^N \, ,
	\end{equation}
	for a suitable positive constant $ C_r $ to be determined. To this end, first we observe that \eqref{eq1} is implied by 
	\begin{equation}\label{eq2}
	\frac{\left|u(x,t)\right|^{m-1}}{\left(1+|x|\right)^{2-\gamma}}\leq 2 C_r \,  k \left\| u(t) \right\|_{\infty,r}^{m-1}  \qquad \text{for a.e. } x \in \mathbb{R}^N \, ,
	\end{equation}
 thanks to \eqref{weight-cond}. Moreover, for all $ r \ge 1 $ we have
 $$
 \frac{1}{\left(1+|x|\right)^{2-\gamma}}\le \frac{r^{2-\gamma}}{\left(r+|x|\right)^{2-\gamma}} \qquad \forall x \in \mathbb{R}^N \, .
 $$
Hence, we can bound the LHS of \eqref{eq2} for a.e.~$ x \in \mathbb{R}^N $ as follows: 
	\begin{equation*}\label{eq3}
	\begin{aligned}
	\frac{\left|u(x,t)\right|^{m-1}}{\left(1+|x|\right)^{2-\gamma}} \leq  r^{2-\gamma} \, \frac{\left|u(x,t)\right|^{m-1}}{\left(r+|x|\right)^{2-\gamma}} \le & \, r^{2-\gamma} \, \sup_{n \in \mathbb{N}} \left\| \frac{u(t)}{\left(r+|x|\right)^{\frac{2-\gamma}{m-1}}} \right\|_{L^\infty(B_{n+1} \setminus B_n)}^{m-1} \\
	 \le & \, r^{2-\gamma} \, \sup_{n \in \mathbb{N}} \, \frac{\left\| u(t) \right\|_{L^\infty(B_{r+n})}^{m-1}}{\left(r+n \right)^{2-\gamma}} \le r^{2-\gamma} \left\| u(t) \right\|_{\infty,r}^{m-1} ,
	\end{aligned}
	\end{equation*}
finally obtaining \eqref{eq2} and thus \eqref{eq1} with $ C_r = r^{2-\gamma}/(2k) $.  
Substituting \eqref{eq1} (and the same inequality for $v$) into \eqref{midpoint} and applying \eqref{smooth-parab}, we get
$$
	\begin{aligned}
	& \, \frac{d}{dt} \int_{\mathbb{R}^N} \Phi_\alpha \left|u(t)-v(t)\right|\rho \, dx \\
	\leq  & \,  C_r \, C_{N,\alpha} \, m \left( \left\| u(t) \right\|_{\infty,r}^{m-1} \vee \left\| v(t) \right\|_{\infty,r}^{m-1} \right) \int_{\mathbb{R}^N} \left|u(t)-v(t)\right|\Phi_\alpha \, \rho \, dx \\ 
\leq & \, C_r \, C_{N,\alpha} \, m \, c_3^{m-1} \left(\left\| u_0 \right\|_{1,r}^{\theta \lambda_1(m-1)} \vee \left\| v_0 \right\|_{1,r}^{\theta\lambda_1(m-1)} \right) \frac{1}{t^{\lambda_1(m-1)}}  \int_{\mathbb{R}^N} \left|u(t)-v(t)\right|\Phi_\alpha \, \rho \, dx \, ,
	\end{aligned}
$$
for a.e.~$ t \in (0,T_r(u_0)\wedge T_r(v_0)) $. Integrating such a differential inequality we reach \eqref{weighted continuous dependence}. However, unlike \eqref{L1 continuous dependence}, we point out the additional dependence of the constant $c_4$ in \eqref{weighted continuous dependence} on $r$ and $\alpha$ through $ C_{N,\alpha} $ and $ C_r $.
\end{proof}

\subsection{Proof of existence} \label{proof-exi}
By using the \emph{a priori} estimates shown above, we are now in position to prove existence for general initial data in $X$. At the level of notation, we point out that the multiplying constants appearing in the statement of Theorem \ref{Existence} are the same as those appearing in Propositions \ref{l4} and \ref{continuous dependence estimates}, up to uppercase letters. 

\begin{proof}[Proof of Theorem \ref{Existence}]
First of all, we approximate the initial datum through the following  sequence of truncations: 
\begin{equation*}\label{truncations}
u_{0n} := \tau_n\!\left( u_0 \right) \chi_{B_n} \, , 
\end{equation*}
where
\begin{equation}\label{truncation-function}
\tau_n(s) :=
\begin{cases}
	s & \text{if}\ -n < s < n \, ,\\
	n & \text{if}\ n\leq s \, , \\
	-n &  \text{if}\ s \leq-n \, ,
\end{cases}
\end{equation}
for all $ n \in \mathbb{N} $. By construction, it is clear that $ \{ u_{0n} \} \subset L^1\!\left(\mathbb{R}^N,\rho\right) \cap L^\infty\!\left(\mathbb{R}^N\right) $, $ \left| u_{0n} \right| \le \left| u_0 \right| $ and $ u_{0n} \to u_0 $ pointwise as $ n \to \infty $. Hence, by Proposition \ref{technical}(2) we have that
\begin{equation}\label{truncations norm converges}
 \left\| u_{0n} \right\|_{1,r} \le  \left\| u_{0} \right\|_{1,r} \quad \forall n \in \mathbb{N} \, , \qquad \lim_{n\to \infty} \left\| u_{0n} \right\|_{1,r} =  \left\| u_{0} \right\|_{1,r}
\end{equation}
and, using dominated convergence as well, 
\begin{equation}\label{truncations converge}
\lim_{n \to \infty}	\left\| u_{0n}-u_0 \right\|_{L^1\left(\Phi_\alpha\right)} = 0 
\end{equation}
provided $ \alpha $ complies with \eqref{cond-alpha}. Let us consider the corresponding sequence $ \{ u_n \}$ of weak energy solutions of problems \eqref{q10}, whose existence and main properties are guaranteed by Proposition \ref{pro1}. Since each $ u_n $ is continuous from $[0,+\infty)$ into $L^1\!\left(\mathbb{R}^N,\rho\right)$, it is also continuous into $L^1\!\left(\Phi_\alpha\right)$. Upon noticing that $ T_r(u_{0n}) \ge T_r(u_{0}) $ and that the constants $ c_4,c_5 $ in \eqref{weighted continuous dependence}--\eqref{L1 continuous dependence} are increasing w.r.t.~dependence on $ \| \cdot \|_{1,r} $, from such estimates and \eqref{truncations norm converges}--\eqref{truncations converge} we readily infer that $ \{ u_n \} $ is a Cauchy sequence in $C\!\left([0,S];L^1\!\left(\Phi_\alpha\right)\right)$ for all $ S<T_r(u_0) $, thus it converges in such a space to a limit function which we call $ u $. Obviously the limit does not depend on $ S $, so $ u \in C\!\left(\left[ 0 , T_r(u_0) \right);L^1\!\left(\Phi_\alpha\right)\right) $ and therefore $ u \in C\!\left(\left[ 0 , T_r(u_0) \right);L^1_{\mathrm{loc}}\!\left(\mathbb{R}^N , \rho \right)\right) $ with $ u(0)=u_0 $.   

We now must demonstrate that $u$ is indeed a solution of \eqref{wpme} and that moreover it satisfies all of the estimates and other statements of Theorem \ref{Existence}. By Proposition \ref{l4}, we know that 
	\begin{equation}\label{smoothing estimate proof}
		\left\| u_n(t) \right\|_{1,r} \leq c_2 \left\| u_{0n} \right\|_{1,r}  \qquad \forall t \in \left( 0 , T_r(u_0) \right) 
	\end{equation}
	and
	\begin{equation}\label{key estimate proof}
		\left\| u_n(t) \right\|_{\infty,r} \leq c_3 \, t^{-\lambda_1} \left\| u_{0n} \right\|_{1,r}^{ \theta \lambda_1} \qquad \forall t \in \left( 0 , T_r(u_0) \right) ,
	\end{equation}
for all $ n \in \mathbb{N} $. On the other hand, since for every $ t>0 $ the sequence $ \{ u_n(t) \} $ converges to $ u(t) $ in $ L^1_{\mathrm{loc}}\!\left( \mathbb{R}^N , \rho \right) $, in view of \eqref{truncations norm converges} and Proposition \ref{technical}(2) we can pass to the limit in \eqref{smoothing estimate proof}--\eqref{key estimate proof} to get \eqref{smoothing estimate}--\eqref{key estimate}. If $ v_n $ is the weak energy solution of problem \eqref{q10} with initial datum $ v_{0n} $, then Proposition \ref{continuous dependence estimates} entails 
	\begin{equation}\label{weighted continuous dependence proof}
	\left\| u_n(t)-v_n(t) \right\|_{L^1(\Phi_\alpha)} \leq \exp\!\left({c_4 \, t^{\theta\lambda_1}}\right) \left\| u_{0n}-v_{0n} \right\|_{L^1(\Phi_\alpha)} \qquad \forall t \in \left( 0 , T_r(u_0) \wedge T_r(v_0) \right)
	\end{equation}
	and
	\begin{equation}\label{L1 continuous dependence proof}
\left| u_n(t)-v_n(t) \right|_{1,r} \leq \exp\!\left({c_5 \, t^{\theta \lambda_1}}\right) \left| u_{0n}-v_{0n} \right|_{1,r} \qquad \forall t \in \left( 0 , T_r(u_0) \wedge T_r(v_0) \right) , 
	\end{equation}
	for all $ n \in \mathbb{N} $. Due to $ L^1\!\left( \Phi_\alpha \right) $ convergence we can easily pass to the limit in \eqref{weighted continuous dependence proof} to reach \eqref{dependence on data 1}, whereas \eqref{dependence on data 2} follows from \eqref{L1 continuous dependence proof} and again Proposition \ref{technical}(2) up to noticing that $ \left| u_{0n}-v_{0n} \right|_{1,r} \le \left| u_{0}-v_{0} \right|_{1,r} $, consequence of the $1$-Lipschitz property of $ \tau_n(\cdot) $. If moreover $ u_{0} \le v_0 $, then $ u_{0n} \le v_{0n} $ by the monotonicity of $ \tau_n(\cdot) $, so that $ u_n \le v_n $ thanks to the ordering principle of Proposition \ref{pro1} and thus $ u \le v $ in $ \mathbb{R}^N \times \left( 0 , T_r(u_0) \wedge T_r(v_0) \right) $ upon letting $ n \to \infty $. 
 

In order to show that $u$ satisfies the very weak formulation \eqref{q50}, it is enough to note that \eqref{key estimate proof} implies the boundedness of $ \{ u_n \}  $ in $ L^\infty_{\mathrm{loc}}\!\left(\mathbb{R}^N\times \left(0,T_r(u_0)\right)\right) $, so we can deduce that $ u^m \in L^1_{\mathrm{loc}}\!\left( \mathbb{R}^N \times \left(0,T_r(u_0)\right) \right) $ and safely pass to the limit in the $n$-version of \eqref{q50}. The fact that, actually, $ u $ is a solution of problem \eqref{wpme} up to $ t=T(u_0) $ is just a consequence of the identity $ T(u_0) = \lim_{r \to +\infty} T_r(u_0) $, the sequence $ \{ u_n \} $ being independent of $ r $.

Let us finally deal with an initial datum $ u_0 \in X_0 $. In this case $  \lim_{r \to +\infty} \left\| u_0 \right\|_{1,r} = 0 $, hence we have that $ T(u_0) = \lim_{r \to +\infty} T_r(u_0) = +\infty $ and therefore the constructed solution $ u $ is global in time. By taking the limit as $ r \to +\infty $ in \eqref{key estimate}, we obtain:
$$
\lim_{r \to +\infty} \left\| u(t) \right\|_{\infty,r} \le  c_3 \, t^{-\lambda_1} \lim_{r \to +\infty} \left\| u_{0} \right\|_{1,r}^{ \theta \lambda_1} = 0 \qquad \forall t > 0 \, ,
$$
whence \eqref{slow} thanks to Proposition \ref{limsup-norm}. Now, for each $ n \in \mathbb{N} $ we know that $ u_n$ is continuous from $ [0,+\infty) $ into $ L^1\!\left( \mathbb{R}^N , \rho \right) $, which is continuously embedded in $ X $, so $ u_n \in C([0,+\infty);X) $. On the other hand, by  \eqref{L1 continuous dependence} we infer that
\begin{equation*}
\left| u_n(t)-u_j(t) \right|_{1,r} \le \exp\!\left({c_5 \, t^{\theta \lambda_1}}\right) \left| u_{0n}-u_{0j} \right|_{1,r} \qquad \forall t>0 \, , \ \forall n,j \in \mathbb{N} \, .
\end{equation*}
Proposition \ref{closedness} ensures that the RHS vanishes as $ n,j \to \infty $, because $ u_{0n} \to u_0 $ in $X$. This means that $ \{ u_n \} $ is a Cauchy sequence in $ C([0,S];X) $, for every $S>0$, and thus converges to the limit solution $ u $ in such space as well. As a result, we can deduce that $ u \in C([0,+\infty);X) $.
\end{proof}

\section{Uniqueness}\label{uniqueness}

The strategy of proof of Theorem \ref{thmuniq} is based on the so-called ``duality method'', {which was first exploited in \cite[Proposition 2.1]{BCP} for the large-data problem of the Euclidean porous medium equation} and more recently adapted to the manifold and fractional settings in \cite{GMPjmpa} and \cite{GMPfrac}, respectively. It consists of choosing a suitable family of test functions in  the weak formulation of problem \eqref{wpme} (recall Definition \ref{defsol}), which are not explicit but can be provided by carefully solving backward dual problems amounting to a weighted heat equation, see \eqref{formal-dual}.

The main issue here is to handle the possible lack of regularity of the weight close to $ x=0 $ (think of the limit case $ \rho(x) = |x|^{-\gamma} $). This is tackled in the proof of Proposition \ref{prop-uniq}. The latter establishes uniqueness under the additional assumption $ u_0 \in  X_\infty $ and an extra pointwise bound for the solutions, to be removed in the end, which amounts to requiring that the norm $ \| \cdot \|_{\infty,r} $ is stable along the evolution. Such a bound is crucial in order to successfully exploit the duality method. In a subsequent lemma we also show that the solutions constructed in Theorem \ref{Existence} do enjoy this property. 

\begin{pro}\label{prop-uniq}
Let $ u $ and $ v $ be any two solutions of problem \eqref{wpme}, corresponding to the same initial datum $u_0\in X_\infty$, such that
	\begin{equation}\label{e20z}
		\left|u(x,t)\right| \vee \left|v(x,t)\right| \leq C \left(1 + |x| \right)^{\frac{2-\gamma}{m-1}} \qquad \text{for a.e. } (x,t) \in \R^N \times (0, T) \, ,
	\end{equation} for some $ C>0 $. Then $ u = v $.
\end{pro}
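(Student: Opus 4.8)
The proof will use the duality method: since $u$ and $v$ solve the same very weak formulation~\eqref{q50} with the same datum $u_0$, their difference satisfies
\[
\int_0^T \int_{\R^N} (u-v)\,\phi_t\,\rho\,dx\,dt = -\int_0^T\int_{\R^N} (u^m-v^m)\,\Delta\phi\,dx\,dt
\]
for all $\phi \in C_c^\infty(\R^N\times(0,T))$. Writing $a(x,t) := \frac{u^m - v^m}{u-v}$ (set to zero where $u=v$), which is a bounded nonnegative measurable coefficient by~\eqref{e20z} and the pointwise bounds on $u,v$ in $X_\infty$, the identity becomes $\int\int (u-v)(\rho\,\phi_t + a\,\Delta\phi)\,dx\,dt = 0$. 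The goal is to choose, for a given target function $\omega$, a test function $\phi$ solving the backward dual problem $\rho\,\phi_t + a\,\Delta\phi = \rho\,\omega$ with $\phi(\cdot,T)=0$, which would force $\int_0^T\int (u-v)\,\omega\,\rho = 0$ and hence $u=v$ after letting $\omega$ range over a rich enough class. Since $a$ is only bounded and measurable (not bounded below), one must first regularize: approximate $a$ by smooth strictly positive coefficients $a_\varepsilon$ bounded above and below (on a fixed ball), and simultaneously approximate $\rho$ by smooth weights $\rho_\varepsilon$ satisfying uniform versions of~\eqref{weight-cond}, then solve the corresponding nondegenerate parabolic dual problems.

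\textbf{Main steps.} First I would fix $R$ large and $\tau \in (0,T)$ and work on the cylinder $Q = B_R\times(\tau,T)$ — or rather on a \emph{punctured} ball $(B_R\setminus \overline{B_\delta})\times(\tau,T)$, since the weight may be singular at the origin and one needs control of $\phi$ and its normal derivative near $x=0$; the singular set will be handled by letting $\delta\to 0$ at the end using that $\gamma<2$ makes $\rho\in L^1_{\mathrm{loc}}$ with an integrable singularity. Second, for a smooth nonnegative $\omega$ compactly supported in $Q$, solve the regularized backward problem $\rho_\varepsilon\,\phi_t + a_\varepsilon\,\Delta\phi = \rho_\varepsilon\,\omega$ in the punctured cylinder with $\phi = 0$ on the parabolic boundary and at $t=T$; classical parabolic theory gives a smooth solution $\phi_\varepsilon$ which, by the maximum principle, is bounded (with a bound depending only on $\|\omega\|_\infty$, $R$, $T$, not on $\varepsilon$ or the lower bound of $a_\varepsilon$) and, crucially, satisfies a one-sided bound making $\phi_\varepsilon \le 0$ when $\omega\ge 0$. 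Third, derive the key \emph{a priori} estimates that survive the limit $\varepsilon\to 0$: an $L^2$-type bound on $a_\varepsilon\,(\Delta\phi_\varepsilon)^2 / \rho_\varepsilon$ or on $\int a_\varepsilon|\Delta\phi_\varepsilon|$ obtained by multiplying the equation by $\Delta\phi_\varepsilon$ (à la Aronson–Crandall–Pierre), together with bounds on $\nabla\phi_\varepsilon$ and on boundary terms — in particular the normal derivative $\partial_\nu\phi_\varepsilon$ on $\partial B_\delta$, which is where the ``careful estimation of normal derivatives'' referenced in the introduction comes in. Fourth, insert $\phi_\varepsilon$ (suitably cut off in time near $t=\tau$ and $t=T$, and in space near $\partial B_R$ and $\partial B_\delta$) into the weak identity for $u-v$; the error terms from the cutoffs, from replacing $(a,\rho)$ by $(a_\varepsilon,\rho_\varepsilon)$, and from the puncture are shown to vanish as $\varepsilon\to 0$, then $\delta\to 0$, then $R\to\infty$, $\tau\to 0$. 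This yields $\int_\tau^T\int_{\R^N}(u-v)\,\omega\,\rho\,dx\,dt \le 0$ for all $\omega\ge 0$ (an inequality rather than equality because of the one-sided sign of $\phi_\varepsilon$), and by symmetry ($u\leftrightarrow v$) also the reverse, hence $u=v$ a.e.

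\textbf{The main obstacle.} The hardest part is making the approximation scheme genuinely robust at the origin: one must solve the dual problem on punctured balls and show that the contribution of the inner boundary $\partial B_\delta$ to the weak identity is negligible. This requires a quantitative bound on $\int_\tau^T \int_{\partial B_\delta} |\partial_\nu\phi_\varepsilon|\,|u^m - v^m|\,d\sigma\,dt$ that is uniform in $\varepsilon$ and vanishes as $\delta\to 0$ — i.e.\ controlling $\|\partial_\nu \phi_\varepsilon\|_{L^1(\partial B_\delta\times(\tau,T))}$ against something like $\delta^{N-1}\delta^{-1}$ times a constant, while the datum $|u^m-v^m| \lesssim (1+|x|)^{m\cdot\frac{2-\gamma}{m-1}}$ stays bounded near $0$; since $\gamma<2$ the weight's singularity $|x|^{-\gamma}$ is integrable and the surface measure gain $\delta^{N-1}$ beats the $\delta^{-1}$ loss (as $N\ge 3$), but extracting the $\varepsilon$-uniform gradient bound near the puncture — rather than just the bulk energy estimate — is delicate and is precisely the technical heart of Proposition~\ref{prop-uniq}. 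A secondary difficulty is the global (in $x$) nature of the problem: the solutions grow like $|x|^{(2-\gamma)/(m-1)}$, so the decay of $\phi_\varepsilon$ as $R\to\infty$ must be fast enough to kill the outer boundary terms, which is where the relation between the growth exponent in~\eqref{e20z} and the structure of the dual heat operator (weighted by $\rho\sim|x|^{-\gamma}$) is used — this is the same balance that singles out $(2-\gamma)/(m-1)$ as critical throughout the paper.
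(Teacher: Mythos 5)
Your overall strategy matches the paper's (duality, approximation of $a$ and $\rho$ by smooth positive functions, dual parabolic problems on punctured balls $B_R \setminus \overline{B}_\delta$, careful control of normal derivatives on $\partial B_\delta$ using $N \ge 3$, limits $\varepsilon \to 0$, $R \to \infty$, $\delta \to 0$), but your specific choice of dual problem introduces a genuine gap. You solve $\rho_\varepsilon\phi_t + a_\varepsilon\Delta\phi = \rho_\varepsilon\omega$ with $\phi(\cdot,T)=0$, whereas the paper solves the \emph{homogeneous} backward equation $\rho_n\partial_t\xi_n + a_n\Delta\xi_n = 0$ with $\omega$ prescribed as \emph{terminal data}, $\xi_n(\cdot,T)=\omega$. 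The difference matters: multiplying the homogeneous equation by $\Delta\xi_n$ and integrating gives the $n$-independent energy bound
\begin{equation*}
\int_0^T\!\!\int_{B_R\setminus B_\delta} \frac{a_n}{\rho_n}\,|\Delta\xi_n|^2\, dx\,dt \le \tfrac 1 2 \left\| \nabla\omega\right\|_{L^2(\R^N)}^2 ,
\end{equation*}
and this uniform bound is exactly what makes the remainder $\int\!\!\int (u-v)(a-a_n)\varphi_\varepsilon\Delta\xi_n$ vanish once you control $\int\!\!\int (a-a_n)^2/a_n$ by the construction of $a_n$. With your source-term formulation, the analogous identity (after testing with $\Delta\phi$) controls $\int\!\!\int (a_\varepsilon/\rho_\varepsilon)|\Delta\phi|^2$ by $\int\!\!\int (\rho_\varepsilon/a_\varepsilon)\omega^2$; since $a_\varepsilon$ must be pushed to zero (because $a$ itself vanishes on $\{u=v\}$), this bound is \emph{not} uniform in $\varepsilon$, and the Cauchy--Schwarz product that should kill the $(a-a_\varepsilon)$ error no longer closes. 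A related inaccuracy: $a$ is not globally bounded — from \eqref{e20z} it grows like $(1+|x|)^{2-\gamma}$, and it is precisely this growth that dictates the decay rate required of the dual solution at infinity.

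Second, you correctly identify the normal-derivative control on $\partial B_\delta$ and $\partial B_R$ as the technical heart, but you stop at naming the difficulty rather than resolving it. The paper supplies the missing quantitative step via explicit comparison functions: a spacetime supersolution $\eta(x,t)=\kappa\,e^{\sigma(T-t)}(1+|x|^2)^{-(\beta - \gamma/2)}$ of the dual equation (with $\beta$ taken large enough — precisely $\beta>\tfrac{N-1}{2}+\tfrac{(2-\gamma)m}{2(m-1)}+\tfrac\gamma2$ — so that the outer boundary contribution vanishes as $R\to+\infty$), and a harmonic barrier $g(x)=d_1|x|^{2-N}+d_2$ on the annuli $B_R\setminus\overline{B}_{R-1}$ and $B_{\delta_0}\setminus\overline{B}_\delta$, matched against $\eta$ (resp. $\|\omega\|_{L^\infty}$) on the inner boundary, which combined with the sign $\partial_\nu\xi_n\le 0$ yields $\sup_{\partial B_R}|\partial_\nu\xi_n|\lesssim R^{\gamma-2\beta}$ and $\sup_{\partial B_\delta}|\partial_\nu\xi_n|\lesssim\delta^{-1}$. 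These two barrier constructions are what makes the boundary error $J_n$ decay like $R^{N-1+\frac{(2-\gamma)m}{m-1}-2\beta+\gamma}+\delta^{N-2}$ and hence vanish in the limits. Until you produce bounds of this type, the plan does not close.
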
 
\begin{proof}

We will proceed through several steps.
	
\medskip 	
	
\noindent \textsc{Step 1} (Rewriting the very weak formulation). 

Clearly, the thesis follows if we are able to show that $u(\tau)=v(\tau)$ for every $ \tau \in (0,T) $. In the sequel, for notational convenience, we will still use $ T $ instead of $ \tau $. For any test function $ \xi \in C^\infty_c\!\left( \R^N \times [0,T] \right) $, from \eqref{q50} and the fact that $ u,v $ are continuous from $ [0,T] $ into $ L^1_{\mathrm{loc}}\!\left(\R^N,\rho\right) $, and share the same initial datum, it is not difficult to deduce that the following identity holds:
	\begin{equation}\label{n3}
		\int_0^T \int_{\R^N} \left[ (u - v) \, \rho \,  \xi_t  + \left(u^m - v^m\right) \Delta \xi \right] dx dt = \int_{\R^N} \left[ u(T) - v(T) \right] \xi(T) \, \rho \, dx \, .
	\end{equation}
Letting  
	\begin{equation}\label{n4}
		a(x,t):= 
		\begin{cases}
			\frac{u^m(x,t) - v^m(x,t)}{u(x,t)- v(x,t)} & \text{if } u(x,t)\neq v(x,t) \, , \\
			0  & \text{if } u(x,t) = v(x,t) \, ,
		\end{cases}
	\end{equation}
	thanks to \eqref{e20z} and the monotonicity of $ s \mapsto s^m $ there exists another positive constant (which we call again $C$) such that
	\begin{equation}\label{n5}
		0 \leq  a(x,t) \leq  C \left(1 + |x|\right)^{2-\gamma} \qquad \text{for a.e. } (x,t) \in \R^N \times (0, T)\,.
	\end{equation}
	Hence, in view of \eqref{n4}, we can rewrite \eqref{n3} as
	\begin{equation}\label{n6}
		\int_0^T \int_{\R^N} (u - v) \left(\rho \, \xi_t + a \, \Delta \xi \right) dx dt  = \int_{\R^N} \left[ u(T) - v(T) \right] \xi(T) \, \rho \, dx \, .
	\end{equation}
	
	\medskip 
	
	\noindent \textsc{Step 2} (Introducing the approximate dual problem). 

	We choose an arbitrary nonnegative function $ \omega \in C^\infty_c\!\left(\R^N \setminus \{ 0 \} \right)  $, and fix $ R_0,\delta_0>0 $ in such a way that 
	\begin{equation}\label{support-omega}
	\operatorname{supp} \, \omega \subseteq \overline{B}_{R_0} \setminus B_{\delta_0} \, .
	\end{equation}
	Then we pick $ R>R_0 + 1 $ and $ 0 < \delta < \delta_0/2 $; in the sequel, $ R_0 $ and $ \delta_0  $  will not change, whereas $ R $ and $ \delta $ will eventually go to $ +\infty $ and $ 0 $, respectively. Ideally, we would like to substitute into \eqref{n6} the solution of the following backward ``dual'' problem: 
	\begin{equation}\label{formal-dual}
	\begin{cases}
	\rho \, \xi_t + a \, \Delta \xi = 0 & \text{in } \R^N \times (0,T) \, ,\\
	\xi =\omega  & \text{on } \R^N \times \{ T \} \, ,
	\end{cases}
	\end{equation}
	which would immediately yield $ \int_{\R^N} \left[ u(T) - v(T) \right] \omega \, \rho \, dx = 0 $ and thus $ u(T)=v(T) $ due to the arbitrariness of $ \omega $. However, there are two main issues: the functions $ a,\rho $ are in general not regular, and $ a $ might be degenerate whereas $ \rho $ is allowed to be singular at the origin. On top of that, $ \xi $ cannot be expected to have a compact support. In order to overcome them, we resort to suitable approximations. As for $ \rho $, we pick a sequence of smooth and positive functions  $ \{ \rho_n \}  $ such that 
	\begin{equation}\label{rho-n-1}
	\lim_{n \to \infty} \rho_n(x) = \rho(x) \qquad \text{for a.e. } x \in \R^N
	\end{equation}
	and 
	\begin{equation}\label{rho-n-2}
	\hat{k} \left( 1 + |x|^2  \right)^{-\frac \gamma 2}  \le \rho_n(x) \le \hat{K} \left| x \right|^{-\gamma} \qquad \forall x \in \mathbb{R}^N \setminus \{ 0 \} \, , \ \forall n \in \mathbb{N} \, ,
	\end{equation}
	where $ \hat{k},\hat{K} $ are positive constants depending only on $ \gamma,k,K $. The existence of such a sequence is readily seen e.g.~by a standard mollification argument. We also approximate $ a $ by means of a sequence $ \{ a_n \}  \subset  C^\infty\!\left( \R^N \times [0, T] \right) $ such that 
	\begin{equation}\label{n20}
		a_n(x,t) \leq \hat{C}  \left(1+ |x|^2\right)^{\frac{2-\gamma}{2}} \qquad  \forall (x,t) \in \R^N \times [0, T] \, , \ \forall n \in \mathbb{N} \, ,
	\end{equation}
	for some positive constant $ \hat{C} $ depending only on $ C,\gamma $. The construction of $ \{ a_n \} $ will be specified in the last step, and we point out that it will also depend on  $R,\delta$, which at this stage are fixed. Now, for each $ n \in \mathbb{N} $, we can introduce the solution $\xi_n$ of the following backward parabolic problem:
	\begin{equation}\label{n9}
		\begin{cases}
			\rho_n \,  \partial_t \xi_n  + a_n \, \Delta \xi_n = 0 & \text{in } \left( B_R \setminus \overline{B}_{\delta} \right) \times (0, T) \, , \\ 
			\xi_n = 0 & \text{on } \left( \partial B_R \cup \partial B_\delta \right) \times (0,T) \, , \\
			\xi_n = \omega  & \text{on } \left( B_R \setminus \overline{B}_{\delta} \right) \times \{ T \} \, .
		\end{cases}
	\end{equation}
Note that, after applying a time change, \eqref{n9} is nothing other than a linear forward parabolic problem with smooth coefficients (in non-divergence form).  As the initial datum is regular and compactly supported in the domain $ B_R \setminus \overline{B}_{\delta} $ and homogeneous boundary conditions are prescribed, standard parabolic theory ensures that $ \xi_n $ is also smooth (see e.g.~\cite[Chapter V]{Lieb}). Hence, dividing the differential equation by $ \rho_n $, multiplying by $ \Delta \xi_n $ and integrating we end up with the crucial identity 
	\begin{equation}\label{eq-energy}
		\frac{1}{2} \int_{B_R \setminus B_\delta } \left| \nabla \xi_n(0) \right|^2 dx + \int_0^T \int_{B_R \setminus B_\delta } \left| \Delta \xi_n \right|^2 \frac{a_n}{\rho_n} \, dxdt = \frac{1}{2}\int_{B_R \setminus B_\delta} \left| \nabla \omega \right|^2 dx =  \frac 1 2 \left\| \nabla \omega \right\|_{L^2\left( \R^N \right)}^2  .
	\end{equation}
Moreover, $ \omega $ being nonnegative and bounded, by the maximum principle (see e.g.~\cite[Chapter II]{Lieb}) we have 
	\begin{equation*}\label{Lp-estimates-xi}
	0 \le	\xi_n  \le \left\| \omega \right\|_{L^\infty\left( \R^N \right)} \qquad \text{in } \left(  \overline{B}_R \setminus B_\delta \right) \times [0,T] \, .
	\end{equation*}
In particular, from the homogeneous Dirichlet boundary condition in \eqref{n9}, we can deduce that
	\begin{equation}\label{n43}
		\frac{\partial \xi_n}{\partial \nu} \leq 0 \qquad \text{on } \left(  \partial B_R \cup \partial B_\delta \right) \times (0, T) \, ,
	\end{equation}
	where $ \frac{\partial}{\partial \nu} $ stands for the outer normal derivative. 
	
	\medskip 
	
	\noindent \textsc{Step 3} (Dealing with remainder terms). 
	
	Now we aim at plugging $ \xi = \xi_n $ into \eqref{n6} and estimate the remainder terms due to the fact that $ a,\rho $ have been replaced by $ a_n,\rho_n$. However, a preliminary issue to tackle is the lack of regularity of $ \xi_n $ when one extends it to the whole $ \R^N $. To this end, for every small enough $ \varepsilon>0 $ we can select a smooth spacial cut-off function $ \varphi_\varepsilon  $ such that
	\begin{equation}\label{cutoff-eps}
	0 \leq \varphi_\varepsilon \leq 1 \quad \text{in } \R^N \, , \qquad \varphi_\varepsilon \equiv 1 \quad \text{in } \overline{B}_{R-2\varepsilon} \setminus B_{\delta+2\varepsilon}  \, , \qquad \varphi_\varepsilon \equiv 0 \quad \text{in }  B_{R-\varepsilon}^c \cup \overline{B}_{\delta+\varepsilon}  \, ,
	\end{equation}
	and 
		\begin{equation}\label{cutoff-eps-bis}
	\left\| \nabla\varphi_\varepsilon \right\|_{L^\infty \left( \R^N \right)} \leq \frac{c}{\varepsilon} \, ,  \qquad \left\| \Delta \varphi_\varepsilon \right\|_{L^\infty \left( \R^N \right)}  \leq \frac{c}{\varepsilon^2} \,  ,
	\end{equation}
	for some constant $c>0$ not depending on $\varepsilon$. The test function $ \xi = \varphi_\varepsilon \, \xi_n $ is admissible in \eqref{n6}, yielding 
	\begin{equation}\label{plugin}
	\int_0^T\int_{\mathbb{R}^N} (u-v)\left[\rho\, \varphi_\varepsilon \, \partial_t\xi_n + a \, \Delta\!\left(\varphi_\varepsilon \, \xi_n\right)\right] dxdt =\int_{\R^N} \left[ u(T) - v(T) \right]  \varphi_\varepsilon \, \omega \, \rho \, dx \, . 
	\end{equation}
	We develop the LHS of \eqref{plugin} by using the differential equation in \eqref{n9} and the product formula for the Laplacian:
	\begin{equation}\label{mainuniqest}
		\begin{aligned}
			\int_{\R^N} \left[ u(T) - v(T) \right]  \varphi_\varepsilon \, \omega \, \rho \, dx = & \, \underbrace{\int_0^T\int_{\mathbb{R}^N}(u-v) \left(a-a_n\right) \varphi_\varepsilon \,  \Delta\xi_n \, dxdt}_{I_{n,\varepsilon}} \\ 
			&+ \underbrace{\int_0^T\int_{\mathbb{R}^N} \left(u^m-v^m\right)\left(2 \left\langle \nabla \varphi_\varepsilon ,  \nabla\xi_n \right\rangle +\xi_n \, \Delta\varphi_\varepsilon\right)dxdt}_{J_{n,\varepsilon}} \\
			&+\underbrace{\int_0^T\int_{\mathbb{R}^N}(u-v) \left(\rho-\rho_n\right) \varphi_\varepsilon \, \partial_t \xi_n \, dxdt}_{E_{n,\varepsilon}} \, .
		\end{aligned}
	\end{equation}
	In order to estimate $J_{n,\varepsilon}$, using \eqref{cutoff-eps}--\eqref{cutoff-eps-bis} we obtain
	\begin{equation}\label{jne-estimate1}
		\left|J_{n,\varepsilon}\right|\leq 3 c \int_0^T\int_{ \left( B_{R}\setminus B_{R-2\varepsilon} \right) \cup \left( B_{\delta+2\varepsilon} \setminus B_\delta \right) } \left|u^m-v^m\right|\left(\frac{\left|\nabla\xi_n\right|}{\varepsilon}+\frac{\left|\xi_n\right|}{\varepsilon^2}\right)dxdt \, .
	\end{equation}
	Since $\xi_n \equiv 0$ on $\partial B_R \cup \partial B_\delta $ and $\xi_n$ is regular up to the boundary, we have
	$$
	 	\sup_{\left( B_{R}\setminus B_{R-2\varepsilon} \right) \times (0,T) } \left| \xi_n \right| \leq 2\varepsilon \sup_{\left( B_{R}\setminus B_{R-2\varepsilon} \right) \times (0,T) } \left| \nabla  \xi_n \right|
	$$
	and
		$$
	\sup_{\left( B_{\delta+2\varepsilon}\setminus B_{\delta} \right) \times (0,T) } \left| \xi_n \right| \leq 2\varepsilon \sup_{\left( B_{\delta+2\varepsilon}\setminus B_{\delta} \right) \times (0,T) } \left| \nabla  \xi_n \right| . 
	$$
	Thus, taking advantage of the above bounds and \eqref{e20z}, estimate \eqref{jne-estimate1} becomes
	\begin{equation}\label{jne-estimate2}
	\begin{aligned}
		\left|J_{n,\varepsilon}\right|\leq & \, \tilde{C} \, R^{\frac{(2-\gamma)m}{m-1}} \sup_{\left( B_{R}\setminus B_{R-2\varepsilon} \right) \times (0,T) } \left| \nabla  \xi_n \right| \frac{1}{\varepsilon} \int_0^T\int_{B_{R}\setminus B_{R-2\varepsilon}}dxdt   \\
		& + \tilde{C} \, \sup_{\left( B_{\delta+2\varepsilon}\setminus B_{\delta} \right) \times (0,T) } \left| \nabla  \xi_n \right| \frac{1}{\varepsilon} \int_0^T\int_{B_{\delta+2\varepsilon}\setminus B_{\delta}}dxdt  \, ,
		\end{aligned}
	\end{equation}
	for another positive constant $ \tilde{ C } $ depending only on $ m, C,c,\delta_0 $. Again, by the regularity of $\xi_n$ near the boundary, we infer that
	\begin{equation*}
		\lim_{\varepsilon \to 0 } \, \sup_{\left( B_{R}\setminus B_{R-2\varepsilon} \right) \times (0,T) } \left| \nabla  \xi_n \right|  =\sup_{\partial B_R \times (0,T) } \left|\frac{\partial\xi_n}{\partial\nu} \right| , \quad 	\lim_{\varepsilon \to 0 } \, \sup_{\left( B_{\delta+2\varepsilon}\setminus B_{\delta} \right) \times (0,T) } \left| \nabla  \xi_n \right|  =\sup_{\partial B_\delta \times (0,T) } \left|\frac{\partial\xi_n}{\partial\nu} \right| , 
	\end{equation*}
	so we can take limits as $ \varepsilon \to 0 $ in \eqref{jne-estimate2} to get 
	\begin{equation}\label{jn-estimate}
		J_n:=\limsup_{\varepsilon \to 0} \left| J_{n,\varepsilon} \right| \leq \tilde{ C } \left( R^{N-1+\frac{(2-\gamma)m}{m-1}} \sup_{\partial B_R \times (0,T) } \left|\frac{\partial\xi_n}{\partial\nu} \right| + \delta^{N-1} \sup_{\partial B_\delta \times (0,T) } \left|\frac{\partial\xi_n}{\partial\nu} \right| \right) , 
	\end{equation}
	where $ \tilde{ C } $ is a constant as above which in addition depends on $ N, T $. To conclude our estimate of $ J_n $ it is only left to obtain good bounds on the normal derivatives of $ \xi_n $, which will be done in the next step. As concerns $ I_{n,\varepsilon} $, H\"{o}lder's inequality yields  
	\begin{equation}\label{ine-estimate} 
	\begin{aligned}
	\left|	I_{n, \varepsilon} \right| \leq I_n :=  & \, \left(\int_0^T\int_{B_{R} \setminus B_\delta }(u-v)^2 \, \frac{(a-a_n)^2}{a_n} \, \rho_n \, dxdt\right)^{\frac 1 2}\left(\int_0^T\int_{B_{R} \setminus B_\delta } \left|\Delta\xi_n\right|^2   \frac{a_n}{\rho_n} \,  dxdt \right)^{\frac 1 2}\\
		\leq & \, C(R,\delta) \left(\int_0^T\int_{B_{R}\setminus B_\delta}\frac{(a-a_n)^2}{a_n} \, dxdt\right)^{\frac 1 2} ,
	\end{aligned}
	\end{equation}
	where we have used \eqref{e20z}, \eqref{rho-n-2} and \eqref{eq-energy} in the second passage, the positive constant $ C(R,\delta) $ depending on $ m,\gamma,C,\hat{K}, \omega,T,R,\delta $ but not on $n$. We will finish this estimate later by making more specific assumptions on the construction of $a_n$. Finally, we control the error term $E_{n,\varepsilon}$ as follows:
	\begin{equation*}\label{err-term}
	\begin{aligned}
	\left| E_{n,\varepsilon} \right| \leq & \, \int_0^T\int_{B_{R}\setminus B_{\delta}} \left|u-v\right|\left|\rho-\rho_n\right| \left|\partial_t\xi_n \right| dxdt\\
		 \leq & \, 2 C \left(1+R\right)^{\frac{2-\gamma}{m-1}}\left(\int_0^T\int_{B_{R}\setminus B_{\delta}} \left( \rho-\rho_n \right)^2 \frac{a_n}{\rho_n}  \, dxdt\right)^{\frac 1 2}\left(\int_0^T\int_{B_{R}\setminus B_{\delta}} \left|\Delta\xi_n\right|^2 \frac{a_n}{\rho_n} \, dxdt\right)^{\frac 1 2}\\
		\leq & \, C(R,\delta) \left(\int_0^T\int_{B_{R} \setminus B_{\delta}} \left(\rho-\rho_n\right)^2 dxdt\right)^{\frac 1 2} ,
	\end{aligned}
	\end{equation*}
	where we have used again \eqref{e20z}, H\"older's inequality, \eqref{rho-n-2}, \eqref{n20}, \eqref{eq-energy}, and the differential equation in \eqref{n9}. Here $ C(R,\delta) $ is a positive constant as above, depending only on the quantities $ m,\gamma,C,\hat{C},\hat{k}, \omega,T,R,\delta $ but not on $n$. Hence, by virtue of \eqref{rho-n-1}--\eqref{rho-n-2} and dominated convergence, we get that
	\begin{equation}\label{error result}
\lim_{n \to \infty}	 \limsup_{\varepsilon \to 0} \left| E_{n,\varepsilon} \right| =0 \, .
	\end{equation}
	
	\medskip 
	
	\noindent \textsc{Step 4} (Bounding normal derivatives). 
	
	In order to control $J_n$, from \eqref{jn-estimate} it is clear that we need to suitably estimate 
	\[ 
\sup_{\partial B_R\times(0, T)} \left|\frac{\partial \xi_n}{\partial \nu}\right| \qquad \text{and} \qquad \sup_{\partial B_\delta \times(0, T)} \left|\frac{\partial \xi_n}{\partial \nu}\right| .  \]
Let us start from the normal derivative on $ \partial B_R $. For any $ \sigma >0$ and $\beta > \gamma / 2 $ we define
	\begin{equation}\label{n21b}
		\eta(x,t):=\frac{\kappa \, e^{\sigma(T-t)}}{\left(1+|x|^2\right)^{\beta-\frac \gamma 2}} \qquad \forall (x,t)\in \mathbb{R}^N\times [0, T]\,,
	\end{equation}
	where we select $ \kappa>0 $ such that 
	\[ 
	\kappa \geq \| \omega \|_{L^\infty\left(\mathbb{R}^N\right)} \left(1+R_0^2\right)^{\beta-\frac \gamma 2}  .\]
	Thus, recalling the support properties of $ \omega $,
	\begin{equation}\label{n33}
		\eta \geq \left\| \omega \right\|_{L^\infty\left(\mathbb{R}^N\right)} \chi_{B_{R_0}}\geq \omega \qquad \text{in } \R^N \times [0,T] \,.
	\end{equation}
	We claim that, for a suitable choice of the parameters $\sigma$ and $\beta$, the function $\eta$ satisfies (for any fixed $ n \in \mathbb{N} $)
	\begin{equation}\label{n33b}
	\rho_n \, \eta_t + a_n \, \Delta \eta \leq 0 \qquad \text{in } \mathbb{R}^N\times (0, T) \, .
	\end{equation}
	First, an elementary computation shows that $\rho_n \eta_t = -\sigma\rho_n \eta$. Then we notice that, thanks to \eqref{rho-n-2}, \eqref{n20} and \eqref{Lap-Phi} with $ \alpha = \beta - \gamma/2 $, 
\begin{equation*}\label{n34b}
		\left| a_n \, \Delta\eta \right| \leq C' \,  \eta \left(1+|x|^2\right)^{-\frac \gamma 2} \le \frac{C'}{\hat{k}} \, \rho_n \, \eta \, ,
\end{equation*}
where $ C' $ only depends on $ N,\gamma,\hat{C},\beta$. Combining the above results, we infer that \eqref{n33b} holds as long as $ \beta>\gamma/2 $ and 
	\begin{equation*}\label{alphacond}
		\sigma \ge \frac{C'}{\hat{k}} \, .
	\end{equation*}
Thus, by \eqref{n33}--\eqref{n33b} and the maximum principle,
	\begin{equation*}
		\eta \geq \xi_n \qquad \text{in } \overline{B}_R \setminus B_\delta \times [0,T] \, . 
	\end{equation*}
We next observe that the time-independent function 
	\begin{equation}\label{barrier-2}
g(x) := \frac{d_1}{|x|^{N-2}} + d_2 \, ,
\end{equation}
	with the choices
	$$
	d_1 = \frac{R^{N-2}\,(R-1)^{N-2}}{R^{N-2}-(R-1)^{N-2}} \, \frac{\kappa \, e^{\sigma T}}{\left(1+(R-1)^2\right)^{\beta-\frac \gamma 2  }} \, , \qquad d_2 = - \frac{d_1}{R^{N-2}} \, ,
	$$
	satisfies $ \Delta g = 0 $ in $ \R^N \setminus \{ 0 \} $ and, by construction, 
	$$
	g = 0 = \xi_n \quad \text{on } \partial B_R \times (0,T) \, , \qquad g = \frac{\kappa \, e^{\sigma T}}{\left(1+(R-1)^2\right)^{\beta-\frac \gamma 2  }} \ge  \eta \ge \xi _n \quad \text{on } \partial B_{R-1} \times (0,T) \, , 
	$$
	whereas the inequality $ g \ge 0 = \xi_n  $ on $ \left( B_R \setminus \overline{B}_{R-1} \right) \times \{ T \} $ is trivial due to the support properties of $ \omega $. Hence, by the maximum principle applied in $ \left( B_R \setminus \overline{B}_{R-1} \right) \times (0,T) $, we deduce that 
		\begin{equation*}\label{supersol}
		g \geq \xi_n \qquad \text{in } \left( {B}_R \setminus \overline{B}_{R-1} \right) \times (0,T) \, . 
	\end{equation*}
Because both $ g $ and $ \xi_n $ vanish on $ \partial B_R $, then it is clear that 
	\begin{equation*}
		\frac{\partial}{\partial\nu}\left(g-\xi_n\right) \leq 0 \qquad\text{on } \partial B_R \times (0,T) \, . 
	\end{equation*}
Recalling \eqref{n43}, this entails 
	\begin{equation}\label{hhh}
		\sup_{\partial B_R\times(0, T)} \left|\frac{\partial \xi_n}{\partial \nu}\right|\leq \sup_{\partial B_R\times(0, T)} \left|\frac{\partial g}{\partial \nu}\right| .
	\end{equation}
	Therefore, we need to compute $ \frac {\partial g} {\partial\nu} $ on $ \partial B_R $:
	\begin{equation*}
	\left .	\frac{\partial g}{\partial \nu} \right |_{\partial B_R}  = - \frac{(N-2)\,d_1}{R^{N-1}} = -\frac{N-2}{R} \, \frac{(1-1/R)^{N-2}}{1-(1-1/R)^{N-2}} \, \frac{\kappa \, e^{\sigma T}}{\left(1+(R-1)^2\right)^{\beta-\frac \gamma 2  }}  \, . 
	\end{equation*}
	Hence 
	\begin{equation}\label{est-R}
		\sup_{\partial B_R\times(0, T)} \left|\frac{\partial g}{\partial \nu}\right| \leq \frac{c}{R^{2\beta-\gamma}}
	\end{equation}
	for some positive constant $ c $ depending only on $ N,\gamma,\beta,R_0,\kappa,\sigma,T $. Now we estimate the normal derivative of $ \xi_n $ on $ \partial B_\delta $. We use again a barrier of the form \eqref{barrier-2}, with the choices 
	$$
		d_1 = - \frac{\delta_0^{N-2} \, \delta^{N-2}}{\delta_0^{N-2}-\delta^{N-2}}  \left\| \omega \right\|_{L^\infty\left(\mathbb{R}^N\right)} , \qquad d_2 = - \frac{d_1}{\delta^{N-2}} \, .
	$$
This ensures that $ g $ is harmonic in $ B_{\delta_0} \setminus \overline{B}_{\delta} $ and satisfies 
		$$
	g = 0 = \xi_n \quad \text{on } \partial B_\delta \times (0,T) \, , \qquad g = \left\| \omega \right\|_{L^\infty\left(\mathbb{R}^N\right)} \ge \xi _n \quad \text{on } \partial B_{\delta_0} \times (0,T) \, , 
	$$
with in addition  $ g \ge 0 = \xi_n  $ on $ \left( B_{\delta_0} \setminus \overline{B}_{\delta} \right) \times \{ T \} $ thanks to \eqref{support-omega}. Thus, arguing as above, we can infer that 
	\begin{equation}\label{hhh-bis}
		\sup_{\partial B_\delta \times(0, T)} \left|\frac{\partial \xi_n}{\partial \nu}\right|\leq \sup_{\partial B_\delta \times(0, T)} \left|\frac{\partial g}{\partial \nu}\right| = \frac{N-2}{\delta} \, \frac{\delta_0^{N-2} \left\| \omega \right\|_{L^\infty\left(\mathbb{R}^N\right)} }{\delta_0^{N-2}-\delta^{N-2}} \, T \, \le \frac{c}{\delta} \, ,
	\end{equation}
for another $ c>0 $ depending only on $ N,\delta_0,\omega,T $, that we do not relabel. We finally estimate $ J_n $ by means of \eqref{jn-estimate}, \eqref{hhh}, \eqref{est-R} and \eqref{hhh-bis}, obtaining	
	\begin{equation}\label{jn-estimate-final}
		J_n \le  \tilde{ C } c \left( R^{N-1+\frac{(2-\gamma)m}{m-1}-2\beta + \gamma	}  + \delta^{N-2} \right) .
	\end{equation}
	
		\medskip 
	
	\noindent \textsc{Step 5} (Taking limits and conclusion). 
	
		From \eqref{jn-estimate-final}, we have 
	\begin{equation}\label{j_n control}
	\lim_{ \substack{R\to+\infty \\ \delta \to 0} } \limsup_{n \to \infty} J_n = 0 
	\end{equation}
	since $ N \ge 3 $, provided
	\begin{equation*}\label{betacond}
		\beta>\frac{N-1}{2}+\frac{(2-\gamma)m}{2(m-1)}+\frac{\gamma}{2} \, ,
	\end{equation*} 
	which we will therefore take to be an additional assumption in \eqref{n21b}. Hence, we are left with the analysis of the remainder $I_n$ as in \eqref{ine-estimate}; to this end, we need to properly construct the sequence $ \{ a_n \} $. For every fixed $ R $ and $ \delta $ as above, we choose a sequence $ \{ \alpha_n \} $ of nonnegative and smooth functions such that 
	\begin{equation}\label{j_n control-2}
	\int_0^T \int_{B_R \setminus B_\delta} \left( a - \alpha_n \right)^2  dx dt \leq \frac 1 {\left( n + 1 \right)^2} \qquad \forall n \in \mathbb{N}
	\end{equation} 
	and \eqref{n20} holds with $a_n$ replaced by $\alpha_n$ and $ \hat{C} = 2  C $ (for instance). This can be easily achieved again through standard mollification arguments, the construction possibly depending on $ R $ and $ \delta $. Then we set
$$
a_n:= \alpha_n + \frac {1}{n+1} \, ,
$$ 
which still guarantees \eqref{n20} (just add $ 1 $ to $ \hat{C} $). By virtue of \eqref{n5}, \eqref{j_n control-2}, the triangle inequality and the fact that $ a_n \ge 1/(n+1) $, we readily obtain 
	\begin{equation*}\label{n49}
		\left(\int_0^T \int_{B_R \setminus B_\delta } \frac{(a-a_n)^2}{a_n} \, dx dt \right)^{\frac 1 2} \leq \frac{1+\sqrt{ \left| B_R  \right| T} }{\sqrt{n+1}} \, . 
	\end{equation*}
	Therefore, thanks to \eqref{ine-estimate}, we infer that 
	\begin{equation}\label{n53}
		\lim_{n\to \infty} \limsup_{\varepsilon \to 0} I_{n,\varepsilon} = 0\,.
	\end{equation}
	Finally, letting (in this order) $ \varepsilon \to 0 $, $ n \to \infty $, $ R \to +\infty $ and $ \delta \to 0 $ in \eqref{mainuniqest}, in view of \eqref{error result}, \eqref{jn-estimate}, \eqref{j_n control}, \eqref{n53} we end up with $\int_{\R^N} \left[ u(T) - v(T) \right]  \omega \, \rho \, dx = 0 $, thus  $u(T)=v(T)$ since $ \omega $ is arbitrary. 
	\end{proof}
	
\begin{oss}\label{comparison}\rm
It is not difficult to check that the proof of Proposition \ref{prop-uniq} works under milder hypotheses. For instance, the same method yields a global \emph{comparison principle} for sub/supersolutions to \eqref{wpme} complying with \eqref{e20z}, even without continuity assumptions in $ L^1_{\mathrm{loc}}\!\left( \R^N , \rho \right) $. Recall that a (very weak) supersolution [subsolution] is a locally bounded function $ u $ satisfying
	\begin{equation*}\label{q50-subsup}
		-\int_0^T \int_{\mathbb{R}^N} u \, \phi_t \, \rho  \, dx dt \underset{ \raisebox{-0.25cm}{$[\le]$} }{\ge}  \int_0^T \int_{\mathbb{R}^N} u^m \, \Delta \phi \, dx dt + \int_{\R^N} u_0 \, \phi(0) \, \rho \, dx
	\end{equation*}
	for all nonnegative $\phi\in C^\infty_c\!\left(\mathbb{R}^N\times [0, T)\right)$. 
\end{oss}	

\begin{lem}\label{bounded-data}
Let the hypotheses of Theorem \ref{Existence} hold, and assume in addition that $ u_0 \in X_\infty $. Let $ r \ge 1 $. Then there exists a positive constant $C_6$ depending only on $ N,m,\gamma,k,K,r$ such that 
\begin{equation}\label{lemma-infty}
\left|u(x,t)\right|  \leq C_6 \left\| u_0 \right\|_{\infty,r} \left(1 + |x| \right)^{\frac{2-\gamma}{m-1}} \qquad \text{for a.e. } (x,t) \in \R^N \times (0, T_r(u_0)) \, .  
\end{equation}
\end{lem}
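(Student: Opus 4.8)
The plan is to cover the existence interval $(0,T_r(u_0))$ by two regimes: a short initial stretch $(0,T_\ast]$, on which \eqref{lemma-infty} follows by comparison with an explicit supersolution built solely out of $\|u_0\|_{\infty,r}$, and the complementary stretch $[t_0,T_r(u_0))$, on which it follows from the smoothing estimate \eqref{smooth-parab}. One first reduces to nonnegative data: since $-|u_0|\le u_0\le|u_0|$ and the construction of Theorem \ref{Existence} commutes with $f\mapsto-f$ (the truncations and the approximate problems being odd) and is order preserving, one gets $|u|\le\widetilde u$, where $\widetilde u$ is the constructed solution with datum $|u_0|\ge0$; moreover the relevant quantities only depend on $|u_0|$, so $T_r(|u_0|)=T_r(u_0)$. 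By the proof of Theorem \ref{Existence}, $\widetilde u$ is the limit in $C([0,S];L^1(\Phi_\alpha))$, for every $S<T_r(u_0)$, of the nonnegative weak energy solutions $\widetilde u_n$ with data $\tau_n(|u_0|)\chi_{B_n}$; these data are dominated by $|u_0|$, so by monotonicity of $\|\cdot\|_{1,r}$ and $\|\cdot\|_{\infty,r}$ their $T_r$ is $\ge T_r(u_0)$, and it suffices to prove \eqref{lemma-infty} for each $\widetilde u_n$ with an $n$-independent constant and then pass to the limit along an a.e.-convergent subsequence.

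For the barrier, fix $r\ge1$, set $\Gamma_r:=1+r^{2-\gamma}$ and $W(x):=(1+|x|^2)^{\frac{2-\gamma}{2}}$, and consider
\begin{equation*}
\overline u(x,t):=\left(g(t)\,W(x)\right)^{\frac{1}{m-1}} , \qquad \dot g = c_\sharp\,g^2 \, , \quad g(0)=\Gamma_r\,\|u_0\|_{\infty,r}^{m-1} \, ,
\end{equation*}
so that $g(t)=g(0)/(1-c_\sharp g(0)\,t)$ is finite on $[0,t_\ast)$, $t_\ast:=1/(c_\sharp g(0))$, and $g\le2g(0)$ on $[0,t_\ast/2]$. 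A direct computation using $\nabla W=(2-\gamma)(1+|x|^2)^{-\gamma/2}x$ and $\Delta W=(2-\gamma)(1+|x|^2)^{-\gamma/2-1}[N+(N-\gamma)|x|^2]$ gives $\overline u_t\ge0$ and $\Delta\overline u^m\le C\,g^{\frac{m}{m-1}}(1+|x|^2)^{\frac{2-\gamma}{2(m-1)}-\frac\gamma2}$ with $C=C(N,m,\gamma)$, while the left inequality in \eqref{weight-cond} gives $\rho\,\overline u_t\ge k\,2^{-\gamma/2}(1+|x|^2)^{-\gamma/2}\overline u_t$; comparing exponents, a sufficiently large $c_\sharp=c_\sharp(N,m,\gamma,k)$ makes $\rho\,\overline u_t\ge\Delta\overline u^m$ hold pointwise, so $\overline u$ is a very weak supersolution of \eqref{wpme} on $(0,t_\ast/2)$. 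Since directly from the definition of $\|\cdot\|_{\infty,r}$ one has $(\tau_n(|u_0|)\chi_{B_n})(x)\le\|u_0\|_{\infty,r}\max(r,|x|)^{\frac{2-\gamma}{m-1}}\le\overline u(x,0)$ for a.e.\ $x$ (this is where the choice of $\Gamma_r$ enters), the comparison principle of Remark \ref{comparison} yields $\widetilde u_n\le\overline u$ on $(0,t_\ast/2)$, hence, using $g\le2g(0)$ and $(1+|x|^2)^{1/2}\le1+|x|$,
\begin{equation*}
\widetilde u_n(x,t)\le(2\Gamma_r)^{\frac{1}{m-1}}\,\|u_0\|_{\infty,r}\,(1+|x|)^{\frac{2-\gamma}{m-1}} \quad\text{for a.e. }(x,t)\in\R^N\times(0,T_\ast] \, , \qquad T_\ast:=\frac{1}{2\,c_\sharp\,\Gamma_r\,\|u_0\|_{\infty,r}^{m-1}} \, .
\end{equation*}

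For the remaining times, \eqref{smooth-parab} applied to $\widetilde u_n$ gives $\|\widetilde u_n(t)\|_{\infty,r}\le c_3\,t^{-\lambda_1}\|u_0\|_{1,r}^{\theta\lambda_1}$ on $(0,T_r(u_0))$, and since $|\widetilde u_n(x,t)|\le\|\widetilde u_n(t)\|_{\infty,r}\max(r,|x|)^{\frac{2-\gamma}{m-1}}\le r^{\frac{2-\gamma}{m-1}}\|\widetilde u_n(t)\|_{\infty,r}(1+|x|)^{\frac{2-\gamma}{m-1}}$, this produces the desired bound, with a constant $C_6'$ still to be fixed, as soon as $t\ge t_0:=\left(c_3\|u_0\|_{1,r}^{\theta\lambda_1}/(C_6'\|u_0\|_{\infty,r})\right)^{1/\lambda_1}$. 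The key is that $t_0\le T_\ast$ holds once $C_6'$ is large (depending only on $N,m,\gamma,k,K,r$): by \eqref{m12a}--\eqref{m12b} one has $1/\lambda_1=(m-1)+\theta$, whence $t_0/T_\ast=C\,(\|u_0\|_{1,r}/\|u_0\|_{\infty,r})^{\theta}\,(C_6')^{-1/\lambda_1}$, and the embedding $X_\infty\hookrightarrow X$ (quantitatively $\|u_0\|_{1,r}\le C(N,\gamma,K)\,\|u_0\|_{\infty,r}$, via $\int_{B_R}\rho\,dx\le C R^{N-\gamma}$) bounds the ratio $\|u_0\|_{1,r}/\|u_0\|_{\infty,r}$. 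With such a $C_6'$ we have $(0,T_\ast]\cup[t_0,T_r(u_0))\supseteq(0,T_r(u_0))$, so combining the two regimes gives $|\widetilde u_n(x,t)|\le C_6\,\|u_0\|_{\infty,r}(1+|x|)^{\frac{2-\gamma}{m-1}}$ on $\R^N\times(0,T_r(u_0))$ with $C_6:=\max\{(2\Gamma_r)^{1/(m-1)},\,r^{(2-\gamma)/(m-1)}C_6'\}$ depending only on $N,m,\gamma,k,K,r$. Letting $n\to\infty$ and recalling $|u|\le\widetilde u$ yields \eqref{lemma-infty}.

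The main obstacle is precisely the overlap condition $T_\ast\ge t_0$: the barrier, fed only with the $L^\infty$-type norm, blows up at a time of order $\|u_0\|_{\infty,r}^{-(m-1)}$, which may be far shorter than $T_r(u_0)\sim\|u_0\|_{1,r}^{-(m-1)}$, so one must rely on the smoothing effect for later times, and the two ranges meet only because the homogeneity exponents satisfy $1/\lambda_1=(m-1)+\theta$ and because $X_\infty\hookrightarrow X$, which together make the possibly enormous ratio $\|u_0\|_{\infty,r}/\|u_0\|_{1,r}$ drop out. A secondary issue is that $\rho$ is merely measurable and possibly singular at the origin; this is dealt with by using only the lower bound in \eqref{weight-cond}, by the smoothness of $\overline u$ and $\overline u^m$, and by invoking the duality-based comparison of Remark \ref{comparison}.
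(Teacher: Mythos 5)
Your proof is correct and follows essentially the same route as the paper's: a power-type separable supersolution calibrated by $\|u_0\|_{\infty,r}$ (your $(g(t)W(x))^{1/(m-1)}$ is exactly the paper's $A(1-t/S)^{-1/(m-1)}(1+|x|^2)^{\frac{2-\gamma}{2(m-1)}}$) handles small times, and the smoothing effect \eqref{smooth-parab} handles the rest, the two regimes overlapping precisely because $\lambda_1(m-1)+\theta\lambda_1=1$ and $\|u_0\|_{1,r}\le c\,\|u_0\|_{\infty,r}$. The only cosmetic difference is that you make the overlap condition $t_0\le T_\ast$ explicit, whereas the paper evaluates the smoothing bound directly at $t=S_r(u_0)$; the content is identical.
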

\begin{proof}
We construct a 	supersolution $ \overline{u} $ of the form 
$$
\overline{u}(x,t) := A \left( 1-\frac{t}{S} \right)^{-\frac{1}{m-1}} \left( 1 + |x|^2 \right)^{\frac{2-\gamma}{2(m-1)} } ,
$$
for suitable positive constants $ A $ and $S$ to be selected. Straightforward computations show that 
\begin{equation}\label{id-1}
\overline{u}_t = \frac{A}{(m-1)S} \left( 1-\frac{t}{S} \right)^{-\frac{m}{m-1}} \left( 1 + |x|^2 \right)^{\frac{2-\gamma}{2(m-1)}} , 
\end{equation}
\begin{equation}\label{id-2}
 \Delta \overline{u}^m = A^m \left( 1-\frac{t}{S} \right)^{-\frac{m}{m-1}} \frac{(2-\gamma)m}{m-1} \left[ N + \frac{(2-\gamma m ) \left| x \right|^2 }{(m-1)\left( 1+|x|^2 \right)} \right] \left( 1+|x|^2 \right)^{\frac{2-\gamma m}{2(m-1)}} .
\end{equation}
Thanks to \eqref{weight-cond} and the fact that $ \overline{u}_t \ge 0 $, the supersolution inequality
$$
\rho \, \overline{u}_t  \ge  \Delta \overline{u}^m \qquad \text{in } \R^N \times (0,S)
$$
holds provided
$$
	k \left( 1 + |x|  \right)^{-\gamma}  \, \overline{u}_t  \ge  \Delta \overline{u}^m \qquad \text{in } \R^N \times (0,S) \, ,
$$
and in view of \eqref{id-1}--\eqref{id-2} the latter is satisfied if, for instance, 
$$
S = \frac{C}{A^{m-1}}
$$
for a positive constant $ C $ depending on $ N,m,\gamma,k $. On the other hand, from the definition of $ \| \cdot \|_{\infty,r} $, it is not difficult to check that there exists $ \kappa>0 $, which depends on $ m,\gamma,r $ only,  such that 
\begin{equation}\label{id-3}
\left| u_0(x) \right|  \le \kappa \left\| u_0 \right\|_{\infty,r}  \left( 1 + |x|^2 \right)^{\frac{2-\gamma}{2(m-1)} }  \qquad \text{for a.e. } x  \in \R^N \, ,
\end{equation}
so  the choice 	$ A = \kappa \left\| u_0 \right\|_{\infty,r}   $ ensures that $ \left| u_0(x) \right| \le \overline{u}(x,0)   $ almost everywhere. By standard comparison at the level of constructed solutions (see e.g.~\cite[Subsection 3.2]{GMPjmpa}),  also noticing that $ -\overline{u} $ is a subsolution, we can therefore deduce that 
$$
\begin{gathered}
\left| u(x,t) \right| \le \overline{u}(x,t) \le 2^{\frac{1}{m-1}}  \kappa \left\| u_0 \right\|_{\infty,r} \left( 1 + |x|^2 \right)^{\frac{2-\gamma}{2(m-1)} } \le 2^{\frac{1}{m-1}}  \kappa \left\| u_0 \right\|_{\infty,r} \left( 1 + |x| \right)^{\frac{2-\gamma}{m-1} } \\ \text{for a.e. } (x,t) \in \R^N \times \left( 0,  S_r(u_0) \wedge T_r(u_0) \right) , \quad  S_r(u_0) := \tfrac {C}{2 \kappa^{m-1} \left\| u_0 \right\|_{\infty,r}^{m-1} } \,  .
\end{gathered}
$$
If $ S_r(u_0) \ge T_r(u_0) $ the proof is complete, otherwise we observe that the smoothing effect \eqref{key estimate} guarantees that
	\begin{equation*}\label{key estimate-bound}
	\begin{aligned}
\left\| u(t) \right\|_{\infty,r} \leq C_3 \, t^{-\lambda_1} \left\| u_0 \right\|_{1,r}^{ \theta \lambda_1} \le C_3 \left[ S_r(u_0) \right]^{-\lambda_1} \left\| u_0 \right\|_{1,r}^{ \theta \lambda_1} \le & \, C_6 \left\| u_0 \right\|_{\infty,r}^{\lambda_1(m-1)}  \left\| u_0 \right\|_{1,r}^{ \theta \lambda_1} \\
 \le & \, C_6 \left\| u_0 \right\|_{\infty,r} \quad \forall t \in \left( S_r(u_0) , T_r(u_0) \right) ,
\end{aligned}
\end{equation*}
where $ C_6 $ is a general positive constant as in the statement possibly changing between different computations, and in the last passage we have used the inequality $ \left\| u_0 \right\|_{1,r} \le c \left\| u_0 \right\|_{\infty,r}  $, valid for some constant $ c>0 $ depending on $ N,\gamma,K $. Thus we obtain \eqref{lemma-infty} by reasoning as in \eqref{id-3}. 
\end{proof}

We are now in position to prove uniqueness for general solutions. In order to do it, we need to carefully remove the extra pointwise bounds \eqref{e20z}, using the stability and continuity properties of the solutions constructed in Theorem \ref{Existence}.

\begin{proof}[Proof of Theorem \ref{thmuniq}]
For every $ \epsilon>0 $ (small enough) let us define $u_\epsilon $ to be the solution, provided by Theorem \ref{Existence}, whose initial datum is ${u}(\epsilon)$. Since $ u(\epsilon) \in X_\infty $, thanks to Proposition \ref{prop-uniq}, Lemma \ref{bounded-data} and the left assumption in \eqref{thmuniq-hp}, we have 
\begin{equation}\label{epsilonequal}
		 u_\epsilon(t) =  u(t+\epsilon)  \qquad \forall t \in \left( 0 , \left( S-\epsilon \right) \wedge T_r(u(\epsilon)) \right) ,
\end{equation}
 where $ r \ge 1 $ is fixed once for all and $ S \in (0,T) $ is arbitrary. Due to the definition of $ T_r(\cdot) $ and the fact that $ u \in L^\infty((0,S);X) $, we notice that 
 $$
 T_r({u}(\epsilon)) \ge  \frac{C_1}{ \left( \left\| u \right\|_{L^\infty((0,S);X) } \vee  \left\| v \right\|_{L^\infty((0,S);X) } \right)^{m-1} }  =: T^\ast \, . 
$$
If we call $ \hat{u} $ the solution of problem \eqref{wpme} constructed in Theorem \ref{Existence} (starting from $ u_0 $), then estimate \eqref{dependence on data 1} ensures that
		\begin{equation}\label{L1}
\left\| \hat{u}(t)-u_\epsilon(t) \right\|_{L^1(\Phi_\alpha)} \leq \exp\!\left({C_4 \, t^{\theta\lambda_1}}\right) \left\| u_0-u(\epsilon) \right\|_{L^1(\Phi_\alpha)} \qquad \forall t \in \left( 0 , T^\ast \right) , 
\end{equation}
where $ \alpha $ is any exponent complying with \eqref{cond-alpha} and $ C_4 $ depends only on the quantities $ N,m,\gamma,k,K,r,\alpha,\left\| u \right\|_{L^\infty((0,S);X) }$. Our next goal is therefore to prove that ${u}(\epsilon) \to u_0$ in $L^1\!\left(\Phi_\alpha\right)$ as $ \epsilon \to 0 $. To this end, first of all we use \eqref{epsilonequal} and \eqref{key estimate} to get
	\begin{equation}\label{epsilon-estimate}
	 \left| u_\epsilon(x,t) \right|^{m-1} \leq C \, \frac{\left(1+|x|^2\right)^{\frac{2-\gamma}{2}} }{t^{\lambda_1(m-1)}} \qquad \text{for a.e. } (x,t) \in  \R^N \times \left( 0 , T^\ast \right) ,
	\end{equation}
	where the positive constant $ C $ depends only on $ N,m,\gamma,k,K,r,\left\| u \right\|_{L^\infty((0,S);X) } $ (in particular is independent of $ \epsilon $).
Arguing as in the proof of \eqref{sign trick-2}, for all $\psi\in C^\infty_c\!\left(\mathbb{R}^N\right)$ with $\psi\geq0$ we have
	\begin{equation*}\label{epsilon-estimate-3}
		\frac{d}{dt}\int_{\mathbb{R}^N} \psi \left|u_\epsilon(t)\right| \rho \, dx \leq \int_{\mathbb{R}^N} \left( \Delta\psi \right) \left|u_\epsilon(t)\right|^m dx \,  ,
	\end{equation*}	
whose integration in time combined with \eqref{epsilon-estimate} entails
	\begin{equation}\label{epsilon-estimate-4}
	\begin{aligned}
& \, \int_{\mathbb{R}^N} \psi \left|u_\epsilon(t)\right| \rho \, dx \\ \leq & \, \int_{\mathbb{R}^N} \psi \left|u(\epsilon)\right| \rho \, dx +  \int_0^t \int_{\mathbb{R}^N} \left( \Delta\psi \right) \left|u_\epsilon(t)\right|^m dx  \\
\leq & \, \int_{\mathbb{R}^N} \psi \left|u(\epsilon)\right| \rho \, dx + C \, \int_0^t \frac{1}{\tau^{\lambda_1(m-1)}} \int_{\mathbb{R}^N} \left| \Delta\psi \right| \left(1+|x|^2\right)^{\frac{2-\gamma}{2}} \left|u_\epsilon(\tau)\right| dx d\tau  \quad \forall t \in \left( 0  , T^\ast \right) . 
\end{aligned}
	\end{equation}		
More rigorously, one proves \eqref{epsilon-estimate-4} on approximate solutions and then passes to the limit. Letting $ \epsilon \to 0 $, thanks to \eqref{epsilonequal}, the continuity of $ u $ with values in $ L^1_{\mathrm{loc}}\!\left( \R^N , \rho \right) $ and the uniform boundedness of the norm $ \| u_\epsilon \|_{1,r} $,  we deduce that
	\begin{equation*}\label{epsilon-estimate-5}
	\begin{aligned}
& \, \int_{\mathbb{R}^N} \psi \left|u(t)\right| \rho \, dx \\
\leq & \, \int_{\mathbb{R}^N} \psi \left|u_0\right| \rho \, dx + C \, \int_0^t \frac{1}{\tau^{\lambda_1(m-1)}} \int_{\mathbb{R}^N} \left| \Delta\psi \right| \left(1+|x|^2\right)^{\frac{2-\gamma}{2}} \left|u(\tau)\right| dx d\tau  \qquad \forall t \in \left( 0  ,S \wedge T^\ast \right) . 
\end{aligned}
	\end{equation*}		
Now we set $ \psi = \Phi_\alpha \, \phi_R $, where $ \{ \phi_R \} $ are the usual cut-off functions as in \eqref{m1}--\eqref{m52} and $\alpha $ fulfills \eqref{cond-alpha}. Due to \eqref{lap-phi}, the product formula for the Laplacian and \eqref{weight-cond}, it is not difficult to verify that there exists $ c>0 $, depending only on $ N,k,\alpha $ (in particular not on $R$), such that 
\begin{equation*}\label{delta-cutoff}
		\left|\Delta \! \left( \Phi_\alpha \, \phi_R \right) \!(x)\right| \leq \frac{ck}{2^{\frac \gamma 2}} \, \frac{\Phi_\alpha(x)}{1+|x|^2} \le c \, \frac{\Phi_\alpha(x) \, \rho(x)}{ \left(1+|x|^2\right)^{\frac{2-\gamma}{2}}}  \qquad \text{for a.e. } x \in \mathbb{R}^N \, .
	\end{equation*}
Taking the limit as $R\to+\infty$, we get
	\begin{equation*}\label{epsilon-estimate-6}
	\begin{aligned}
& \, \int_{\mathbb{R}^N}  \left|u(t)\right| \Phi_\alpha \, \rho \, dx \\
\leq & \, \int_{\mathbb{R}^N} \left|u_0\right| \Phi_\alpha \, \rho \, dx + C \, \int_0^t \frac{1}{\tau^{\lambda_1(m-1)}} \int_{\mathbb{R}^N}  \left|u(\tau)\right| \Phi_\alpha\, \rho \, dx d\tau  \qquad \forall t \in \left( 0  ,S \wedge T^\ast \right) ,
\end{aligned}
	\end{equation*}	
for another constant $ C>0 $ as above which in addition depends on $c$. By virtue of Proposition \ref{technical}(1), this implies
	\begin{equation}\label{epsilon-estimate-7}
\int_{\mathbb{R}^N}  \left|u(t)\right| \Phi_\alpha \, \rho \, dx \\
\leq \int_{\mathbb{R}^N} \left|u_0\right| \Phi_\alpha \, \rho \, dx + C E \left\| u \right\|_{L^\infty((0,S);X) } \, \int_0^t \frac{1}{\tau^{\lambda_1(m-1)}} d\tau  \quad \forall t \in \left( 0  ,S \wedge T^\ast \right) ,
	\end{equation}
	where $ E $ is the continuous-embedding constant of $ X $ into $ L^1\!\left(\Phi_\alpha \right) $. Letting $ t \to 0 $ in \eqref{epsilon-estimate-7}, we obtain 
	\begin{equation*}\label{epsilon-estimate-8}
\limsup_{t \to 0} \int_{\mathbb{R}^N}  \left|u(t)\right| \Phi_\alpha \, \rho \, dx \le \int_{\mathbb{R}^N} \left|u_0\right| \Phi_\alpha \, \rho \, dx \, .
	\end{equation*}
Thus, since $ u(t) $ converges to $ u_0 $ in $ L^1_{\mathrm{loc}}\!\left( \R^N , \rho \right) $, for every $ R >0 $ we infer that
$$
\begin{aligned}
\limsup_{t \to 0} \int_{B_R^c}  \left|u(t)\right| \Phi_\alpha \, \rho \, dx  = & \, \limsup_{t \to 0}  \left( \int_{\R^N}  \left|u(t)\right| \Phi_\alpha \, \rho \, dx -  \int_{B_R}  \left|u(t)\right| \Phi_\alpha \, \rho \, dx  \right)  \\
\le & \, \int_{\mathbb{R}^N} \left|u_0\right| \Phi_\alpha \, \rho \, dx -  \int_{B_R}  \left|u_0\right| \Phi_\alpha \, \rho \, dx  = \int_{B_R^c}  \left|u_0\right| \Phi_\alpha \, \rho \, dx \, ,
\end{aligned}
$$
whence
$$
\begin{aligned}
& \,\limsup_{t \to 0} \int_{\R^N}  \left|u(t)-u_0\right| \Phi_\alpha \, \rho \, dx \\
\le & \, \limsup_{t \to 0} \int_{B_R}  \left|u(t)-u_0\right| \Phi_\alpha \, \rho \, dx + \limsup_{t \to 0} \int_{B_R^c}  \left|u(t)\right| \Phi_\alpha \, \rho \, dx + \int_{B_R^c}  \left|u_0\right| \Phi_\alpha \, \rho \, dx \\
\le & \, 2 \int_{B_R^c}  \left|u_0\right| \Phi_\alpha \, \rho \, dx \, , 
\end{aligned}
$$
so we reach the desired $ L^1\!\left( \Phi_\alpha \right) $ convergence by finally letting $ R \to +\infty $. As a result, taking the limit of \eqref{L1} as $\epsilon \to 0 $ and recalling \eqref{epsilonequal}, we find that $ \hat{u}(t) = u(t) $ for every $ t \in (0,S \wedge T^\ast) $. Repeating exactly the same proof with $ v $ replacing $ u $ yields $ v(t)=\hat{u}(t)=u(t) $ for every $ t \in (0,S \wedge T^\ast) $. If $ T^\ast<S $ then we can apply Proposition \ref{prop-uniq} to extend uniqueness to the time interval $ (T^\ast,S) $. In any case, we obtain $ u(t)=v(t) $ for all $ t \in (0,S) $, which implies the thesis since $ S $ can be taken arbitrarily close to $T$.
\end{proof}

\begin{oss}\label{N=2} \rm 
	{As mentioned in the Introduction,  we require throughout the paper that $ N \ge 3 $. However, most of our results also hold when $ N=2 $ or $N=1$, up to some clarifications. In particular, in the proof of Proposition \ref{pro2} one should exploit suitable weighted Gagliardo-Nirenberg inequalities (see \cite{CKN} at least for $ \rho(x) = |x|^{-\gamma} $) instead of the weighted Sobolev inequality \eqref{e12}, and argue as in \cite[pp.~67--69]{BCP}. Note that this forces the further restriction $ \gamma<1 $ if $ N=1 $, which is in fact also necessary for $ \rho $ to be locally integrable. In addition, in the proof of Proposition \ref{prop-uniq}, the $N$-dimensional Green function used in \eqref{barrier-2} should be replaced by the $2$-dimensional logarithmic one if $N=2$, whereas a slightly different approach is required if $ N=1 $ (the boundary integral on $ \partial B_\delta $ does not vanish anymore as $ \delta \to 0 $). In this case, it is more convenient to solve the dual problem \eqref{n9} with the approximate weight $ \rho_n $ replaced by  the original one $ \rho$ and in the whole ball $ B_R $, some extra care being required due to the fact that the dual solution $ \xi_n $ is no more a classical one.}
\end{oss}



\section{Blow-up}\label{blowup}

In order to construct radial solutions to the semilinear elliptic equation \eqref{ellip-base}, first of all we take advantage of ODE shooting techniques to solve the following Cauchy problems:
\begin{equation}\label{ode1}
	\begin{cases}
 \left(w^m\right)''(y) + \frac{N-1}{y} \left(w^m\right)'(y) = \rho(y) \, w(y) & \text{for a.e. } y>0 \, , \\
		w(0) = \beta \, , \\
	\left(	w^m \right)'( y ) = o\!\left(y^{N-1}\right) & \text{as } y \to 0 \, ,
	\end{cases}
\end{equation}
and then we analyze the asymptotic behavior as $ y \to + \infty $. Note that, with abuse of notation, by $ \rho(y) $ we mean the one-variable function representing the radial function $ \rho(x) $. The initial condition on the radial derivative in \eqref{ode1} replaces the classical one because for $ \gamma \in [1,2) $, with the singular weight $ y^{-\gamma} $, it is not possible to require that $ \left(	w^m \right)'( 0 ) = 0  $.

\begin{proof}[Proof of Proposition \ref{sol-ellip}]

We employ a strategy inspired by \cite[Lemma 5.5]{GMPjmpa}, and divide the proof into three main steps. By means of a routine scaling argument, it is enough to prove all of the results for the case $ T=1/(m-1) $, where the equation becomes \eqref{ellip-base}, which will be tacitly assumed from now on. 
	
\medskip 	
	
\noindent \textsc{Step 1} (Equivalent problems and existence). 

If we let $ V := w^m $, then the ODE underlying \eqref{ellip-base} can be written as 
	\begin{equation}\label{var-ode1}
		\left(y^{N-1} \, V'(y) \right)' = y^{N-1} \, \rho(y) \,  V(y)^{\frac 1 m } \, , 
	\end{equation}
	for $ y >0$. Therefore, since the nonlinearity on the RHS is sublinear, global existence is guaranteed as long as local existence is. Given the assumptions on $ \rho $, it is not difficult to check that the Cauchy problem \eqref{ode1} is actually equivalent to the integral formulation 
		\begin{equation}\label{var-ode1-int}
		V(y) = \beta^m + \int_0^y  \frac{1}{z^{N-1}}\, \int_0^z \zeta^{N-1} \, \rho(\zeta) \,  V(\zeta)^{\frac 1 m }  \, d\zeta   dz \qquad \forall y \ge 0 \, .
	\end{equation}
	On the other hand, a local solution to \eqref{var-ode1-int} can be obtained through a standard fixed point method, working in the complete metric space
	$$
	\left\{ V \in C^0\!\left( [0,\varepsilon] \right): \ \beta^m \le V(y) \le \mu \ \ \forall y \in [0,\varepsilon]  \right\} ,
	$$
where $ \mu,\varepsilon>0 $ are suitable constants depending on $ N,\gamma,K,\beta $. Thus, a (global) solution to \eqref{ode1} exists, and \eqref{var-ode1-int} ensures that it is strictly increasing, a property that we will now exploit.  	
	
\medskip 	
	
\noindent \textsc{Step 2} (Asymptotic behavior). 

From \eqref{var-ode1-int}, \eqref{weight-cond} and the monotonicity of $V$ we get 
\begin{equation*}\label{sol-ellliptic-radial-integ-bis}
V^\prime(y) = \frac{\int_0^y z^{N-1} \, \rho(z) \,  V(z)^{\frac 1 m }  \, dz}{y^{N-1}} \le \frac{K}{N-\gamma} \, y^{1-\gamma} \, V(y)^{\frac{1}{m}}  \qquad \forall  y >0 \, ,
\end{equation*}
which is equivalent to
\begin{equation*}\label{sol-ellliptic-radial-integ-ter}
\left(V^{\frac{m-1}{m}} \right)^{\prime}(y) \le \frac{m-1}{m}\,\frac{K}{N-\gamma} \, y^{1-\gamma}   \qquad \forall  y >0 \, ,
\end{equation*}
whose integration readily yields
\begin{equation}\label{sol-ellliptic-radial-integ-qqs}
\limsup_{y \to +\infty} \frac{V(y)}{y^{\frac{2-\gamma}{m-1}m}} \le \left[ \frac{(m-1)K}{m(N-\gamma)(2-\gamma)} \right]^{\frac{m}{m-1}} .
\end{equation}
Next, we prove the reverse bound (for the $ \liminf $) by induction. To this purpose, let us set
$$
F(y) := 
\begin{cases}
y^2 & \text{if } y \in [0,1] \, ,  \\
y^{2-\gamma} & \text{if } y > 1 \, .
\end{cases}
$$
 For a given $ n \in \mathbb{N} $, assume that $ V $ satisfies
\begin{equation}\label{V-liminf-1}
V(y) \ge c_n \, F (y)^{p_n} \qquad \forall y \ge 0 \, ,
\end{equation}
where $ \{ c_n \} $ and $ \{ p_n \} $ are suitable nonnegative sequences to be chosen, with $ \{ p_n \} $  increasing and less than $ m/(m-1) $. Under these hypotheses, a straightforward computation shows that there exists a positive constant $ C>0 $, depending only on $ N$ and $\gamma $, such that   
\begin{equation}\label{V-liminf-2}
\int_0^y  \frac{1}{z^{N-1}}\, \int_0^z \zeta^{N-1} \, (1+\zeta)^{-\gamma} \,  F(\zeta )^q  \, d\zeta   dz  \ge  \frac{C}{\left( q+1 \right)^2} \, F(y)^{q+1}   \qquad \forall y \ge 0 \, , \ \forall q \ge 0 \, . 
\end{equation} 
Substituting \eqref{V-liminf-1} into \eqref{var-ode1-int}, and using \eqref{V-liminf-2} with $ q=p_n/m $ and the lower bound in  \eqref{weight-cond}, we obtain 
\begin{equation*}\label{V-liminf-3}
\begin{gathered}
		V(y) \ge c^{\frac 1 m}_n  k \, \int_0^y  \frac{1}{z^{N-1}}\, \int_0^z \zeta^{N-1} \, (1+\zeta)^{-\gamma} \,  F(\zeta)^{\frac {p_n}{m} }  \, d\zeta   dz \ge  \frac{k\,C}{\left( \frac{p_n}{m}+1 \right)^2}  \, c^{\frac 1 m}_n  F(y)^{\frac{p_n}{m}+1} \\[0.15cm]
		 \forall y \ge 0 \, .
\end{gathered}
\end{equation*} 
Note that, up to allowing $C $ to depend on $ m $ and $ k $ as well, the above inequality can be rewritten as 
\begin{equation*}\label{V-liminf-4}
V(y) \ge C \, c_n^{\frac 1 m } F(y)^{\frac{p_n}{m}+1} \qquad \forall y \ge 0 \, .  
\end{equation*}
Thus, if we recursively choose 
\begin{equation}\label{V-liminf-5}
p_{n+1} = \frac{p_n}{m}+1 \, , \qquad c_{n+1} = C \, c^{\frac 1 m}_n , 
\end{equation}
and observe that \eqref{V-liminf-1} trivially holds with $ p_0 =0 $ and $ c_0 = \beta $, by induction we deduce that \eqref{V-liminf-1} is satisfied for every $ n \in \mathbb{N}  $. As \eqref{V-liminf-5} can be solved explicitly, we end up with 
\begin{equation*}\label{V-liminf-6}
V(y) \ge \beta^{\frac{1}{m^{n}}}  \, C^{\frac{m-m^{1-n}}{m-1}} \,  F (y)^{\frac{m}{m-1}\left( 1-\frac{1}{m^n} \right) } \qquad \forall y \ge 0 \, , \ \forall n \in \mathbb{N} \, ,
\end{equation*}
which implies, by letting $ n \to \infty $, 
\begin{equation*}\label{V-liminf-7}
V(y) \ge C^{\frac{m}{m-1}} \,  F (y)^{\frac{m}{m-1}} \qquad \forall y \ge 0 \, . 
\end{equation*}
As a result, recalling the definition of $F$, we finally infer that 
\begin{equation}\label{sol-ellliptic-radial-integ-liminf}
\liminf_{y \to +\infty} \frac{V(y)}{y^{\frac{2-\gamma}{m-1}m}} \ge C^{\frac{m}{m-1}} \, .
\end{equation}
An elementary computation shows that 
$$
\underline{\kappa} \, \liminf_{y \to +\infty}  \frac{w(y)}{y^{\frac{2-\gamma}{m-1}}} \le \lim_{r \to +\infty}  \left\| w \right\|_{1,r} \le  \overline{\kappa} \, \limsup_{y \to +\infty} \frac{w(y)}{y^{\frac{2-\gamma}{m-1}}}  \, , 
$$
where $ \underline{\kappa} , \overline{\kappa} $ are suitable positive constants depending only on $ N,m,\gamma,k,K $. Hence, thanks to \eqref{sol-ellliptic-radial-integ-qqs} and \eqref{sol-ellliptic-radial-integ-liminf}, estimate \eqref{ellip-ord-2} (when $ T=1/(m-1) $) is proved.  
	
\medskip 	
	
\noindent \textsc{Step 3} (Ordering principle). 

Let $ w_1 $ and $ w_2 $ denote the two solutions of \eqref{ode1} corresponding to $ \beta = \beta_2 $ and $ \beta = \beta_1 $, respectively, with $ \beta_2 > \beta_1 >0 $. Subtracting the equations in the form \eqref{var-ode1} and integrating we obtain 
$$
\begin{gathered}
y^{N-1} \left( w_2^m - w_1^m \right)'(y) - z^{N-1} \left( w_2^m - w_1^m \right)'(z)   = \int_z^y \zeta^{N-1} \, \rho(\zeta) \left( w_2(\zeta)-w_1(\zeta) \right) d \zeta  \\ \forall y,z>0 : \ y>z \, , 
\end{gathered}
$$
whence, by letting $ z \to 0 $ and using the initial condition on the derivatives, 
\begin{equation}\label{eq-comparison}
y^{N-1} \left( w_2^m - w_1^m \right)'(y)  = \int_0^y \zeta^{N-1} \, \rho(\zeta) \left( w_2(\zeta)-w_1(\zeta) \right) d \zeta  \qquad \forall y>0 \, . 
\end{equation}
Let
$$
\bar{y} := \sup \left\{ y>0 : \ w_2(z)>w_1(z) \ \, \text{for all } z <y  \right\} .
$$
If, by contradiction, $ \bar{y} < + \infty $ then $ w_2(\bar{y}) = w_1(\bar{y}) $ and $ w_2(z)>w_1(z) $ for all  $ z \in (0,\bar{y}) $, but this is clearly incompatible with \eqref{eq-comparison}, which implies the monotonicity of $ \left( w_2^m - w_1^m \right) $ in the interval $  (0,\bar{y}) $. 
\end{proof}

Using $ \{ W_\beta \} $ via the separable solutions \eqref{def u2}, we can now prove our main blow-up result, by proceeding similarly to \cite[Theorem 2.7]{GMPjmpa}.

\begin{proof}[Proof of Theorem \ref{blowupthm}]
For an initial datum $ u_0 $ satisfying \eqref{blowupthmeq}, thanks to Proposition \ref{weight-cond} it is clear that $ u_0 \in X_\infty $. Moreover, the two-sided bound \eqref{ellip-ord-3} is a trivial consequence of \eqref{ellip-ord} evaluated both at $ \beta=\beta_1 $ and $\beta = \beta_2 $, along with the monotonicity of $ \ell(\cdot) $ with respect to the argument. In view of Lemma \ref{bounded-data} and Remark \ref{comparison}, we can therefore assert that 
\begin{equation}\label{estim-comparison}
U_{\beta_1}(x,t) \le u(x,t) \le U_{\beta_2}(x,t) \qquad \text{for a.e.~} (x,t) \in \left( 0 , T(u_0) \wedge T \right) . 
\end{equation}
On the other hand, estimate \eqref{estim-comparison} itself guarantees that actually the existence time of $ u $ can be pushed up to $ t=T $; indeed, if $ T(u_0) < T $ one can restart the construction procedure of Theorem \ref{Existence} using as a new initial datum $ u(T(u_0)) $, which is less than $ U_{\beta_2}(T(u_0))  $ and thus still belongs to $ X_\infty $. This can be carried out in a finite number of steps up to an arbitrary $ S < T $. As a result, we deduce that in fact 
\begin{equation*}\label{estim-comparison-2}
U_{\beta_1}(x,t) \le u(x,t) \le U_{\beta_2}(x,t) \qquad \text{for a.e.~} (x,t) \in \left( 0 , T \right) ,
\end{equation*}
which readily yields \eqref{ptwse-blowup}. Note that, for this special class of initial data, the construction of the solution to \eqref{wpme} could have been alternatively obtained by an \emph{ad hoc} procedure consisting of solving Dirichlet problems on balls $ B_R $ (see e.g.~\cite[Theorem 2.2]{GMPjmpa}) prescribing as boundary conditions either $ U_{\beta_1} $ or $ U_{\beta_2} $, and taking limits as $ R \to +\infty $. The advantage is that only a local comparison is needed and the constructed solution is naturally defined in the whole time interval $ (0,T) $.
\end{proof}

\appendix
\section{Some technical results}\label{global in time}
We collect here a few useful facts concerning the spaces $ X_p $ introduced in Subsection \ref{sf}. 
\begin{pro}\label{technical}
	The following hold:
	\begin{enumerate}
		\item For all $\alpha>\frac{2-\gamma}{2(m-1)}+\frac{N-\gamma}{2}$, we have $X \hookrightarrow L^1\!\left(\Phi_\alpha\right)$;  \label{A}
		\item If $ \left\{ f_n \right\} \subset L^1_{\mathrm{loc}}\!\left(\mathbb{R}^N,\rho\right)$ and $f_n\to f$ in $ L^1_{\mathrm{loc}}\!\left(\mathbb{R}^N,\rho\right)$, then 
		\begin{equation*}
			\left\| f \right\|_{p,r}\leq \liminf_{n\to\infty} \left\| f_n \right\|_{p,r} \qquad \forall p \in [1,\infty] \, , \ \forall r \ge 1 \, ,
		\end{equation*}
		and the same inequality holds for the norm $ |\cdot|_{p,r} $. \label{B}
	\end{enumerate}
\end{pro}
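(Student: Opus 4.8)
For part (1), the plan is a direct estimate on dyadic annuli. Write $\mathbb{R}^N = B_1 \cup \bigcup_{j \ge 0} \left( B_{2^{j+1}} \setminus B_{2^j} \right)$ and split the integral $\int_{\mathbb{R}^N} |f| \, \Phi_\alpha \, \rho \, dx$ accordingly. On the annulus $B_{2^{j+1}} \setminus B_{2^j}$ we have $\Phi_\alpha(x) = (1+|x|^2)^{-\alpha} \le C \, 2^{-2\alpha j}$, so the contribution of that annulus is at most $C \, 2^{-2\alpha j} \int_{B_{2^{j+1}}} |f| \, \rho \, dx$. By the very definition of $\|f\|_{1,r}$ (with $r=1$, say), the latter integral is bounded by $\|f\|_{1,1} \, \left( 2^{j+1} \right)^{\frac{2-\gamma}{m-1}+N-\gamma}$. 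Hence the $j$-th term is controlled by $C \, \|f\|_{1,1} \, 2^{j\left( \frac{2-\gamma}{m-1}+N-\gamma - 2\alpha \right)}$, and the geometric series converges precisely when the exponent is negative, i.e.\ when $2\alpha > \frac{2-\gamma}{m-1}+N-\gamma$, which is exactly condition \eqref{cond-alpha}. The ball $B_1$ contributes at most $\|f\|_{1,1}$ (again by the definition of the norm, or just local integrability), so altogether $\|f\|_{L^1(\Phi_\alpha)} \le E \, \|f\|_{1,1}$ for an explicit constant $E = E(N,m,\gamma,k,K,\alpha)$, which gives the continuous embedding. The case $\gamma=0$ needs no special care, and the weight bounds \eqref{weight-cond} are only used implicitly through the normalization already built into $\|\cdot\|_{1,r}$.

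For part (2), the plan is the standard lower-semicontinuity argument for suprema of integral functionals. Fix $R \ge r$ and $p \in [1,\infty)$. Since $f_n \to f$ in $L^1_{\mathrm{loc}}(\mathbb{R}^N,\rho)$, a subsequence converges $\rho$-a.e.\ on $B_R$; along that subsequence, Fatou's lemma gives $\int_{B_R} |f|^p \rho \, dx \le \liminf_n \int_{B_R} |f_n|^p \rho \, dx$. Multiplying by the fixed factor $R^{-\frac{2-\gamma}{m-1}-\frac{N-\gamma}{p}}$, taking the supremum over $R \ge r$ on the right after passing to the $\liminf$ (using that $\sup$ of $\liminf$ is $\le \liminf$ of $\sup$), and finally the supremum over $R$ on the left, yields $\|f\|_{p,r} \le \liminf_n \|f_n\|_{p,r}$. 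One technical point: the $\liminf$ is initially obtained only along a subsequence, but since the inequality $\int_{B_R}|f|^p\rho \le \liminf_n \int_{B_R}|f_n|^p\rho$ holds for the full sequence (any subsequence of $\{f_n\}$ still converges in $L^1_{\mathrm{loc}}(\rho)$, so it admits a further a.e.-convergent sub-subsequence, and Fatou applies there), this is harmless. For $p=\infty$ one replaces Fatou by the fact that a.e.\ convergence implies $\|f\|_{L^\infty(B_R)} \le \liminf_n \|f_n\|_{L^\infty(B_R)}$ (lower semicontinuity of the sup norm under a.e.\ convergence), and the rest is identical; dividing instead by $R^{\frac{2-\gamma}{m-1}}$ per \eqref{m6a}. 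Finally, the statement for $|\cdot|_{p,r}$ follows verbatim by inserting the fixed cut-off $\phi_R$: $\phi_R f_n \to \phi_R f$ in $L^1_{\mathrm{loc}}(\rho)$ as well, so the same Fatou/sup manipulation applies to $\int_{\mathbb{R}^N} |\phi_R f|^p \rho \, dx$.

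Neither part presents a genuine obstacle; the only place where one must be slightly careful is the interchange of $\liminf_n$ and $\sup_{R \ge r}$ in part (2) — this only goes one way, $\sup_R \liminf_n \le \liminf_n \sup_R$, but that is precisely the direction needed. I expect the dyadic summation in part (1) to be the most computational step, but it is entirely routine given that the exponent threshold falls out exactly as \eqref{cond-alpha}.
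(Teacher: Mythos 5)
Your proof is correct in both parts, and part (1) takes a genuinely different route from the paper's. You decompose the tail of $\int_{\mathbb R^N}|f|\Phi_\alpha\rho\,dx$ into dyadic annuli $B_{2^{j+1}}\setminus B_{2^j}$, bound $\Phi_\alpha$ crudely on each shell, feed in the definition of $\|f\|_{1,1}$, and sum a geometric series whose ratio is $2^{\frac{2-\gamma}{m-1}+N-\gamma-2\alpha}$. The paper instead writes $|x|^{-2\alpha}=2\alpha\int_{|x|}^{\infty}R^{-2\alpha-1}\,dR$, applies Fubini's theorem, and bounds the resulting single integral $\int_r^\infty R^{\frac{2-\gamma}{m-1}+N-\gamma-2\alpha-1}\,dR$. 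The two are equivalent in content — both convert the continuous supremum built into $\|\cdot\|_{1,r}$ into a convergent series/integral at exactly the same threshold — but your dyadic argument avoids Fubini and is a touch more elementary, while the paper's layer-cake identity produces the constant in closed form without discretizing. For part (2) your argument is essentially the paper's Fatou argument, and you are right to spell out the subsequence bookkeeping (pass to a subsequence realizing the full $\liminf$, then a further a.e.-convergent sub-subsequence, and only then invoke Fatou); the paper compresses this into a "without loss of generality" that, read literally, hides the same point. For $p=\infty$ you invoke lower semicontinuity of $\|\cdot\|_{L^\infty(B_R)}$ under a.e.\ convergence directly, whereas the paper first proves the finite-$p$ inequality, uses $\|\cdot\|_{p,r}\le C^{1/p}\|\cdot\|_{\infty,r}$, and lets $p\to\infty$; your route is more direct and equally valid.
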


\begin{proof}
	Let us prove the two properties separately. 
	
\noindent \eqref{A} Given any $f\in X$, from the definition of $ \| \cdot \|_{1,r} $ we have
	\begin{equation*}
		R^{-\frac{2-\gamma}{m-1}-N+\gamma} \, \int_{B_R} \left| f \right|\rho \, dx \leq \left\| f \right\|_{1,r} =: M  \qquad \forall R \ge r \ge 1 \, .
	\end{equation*}
	Next, we notice that 
	\begin{equation}\label{a2}
	\begin{aligned}
 \left\| f \right\|_{L^1\left( \Phi_\alpha \right)}	=	\int_{\mathbb{R}^N} \frac{\left|f(x)\right|}{\left(1+|x|^2\right)^\alpha}\, \rho(x) \, dx \leq & \, \int_{B_r} \left|f(x)\right|\rho(x)\, dx +\int_{B_r^c}\frac{\left|f(x)\right|}{|x|^{2\alpha}} \, \rho(x) \, dx \\
 \leq & \, r^{\frac{2-\gamma}{m-1}+N-\gamma} \, M + \int_{B_r^c}\frac{\left|f(x)\right|}{|x|^{2\alpha}} \, \rho(x) \, dx \, .
 \end{aligned}
	\end{equation}
	Now, let us write 
	\begin{equation*}
		\frac{1}{|x|^{2\alpha}} = 2\alpha \, \int_{|x|}^{+\infty} \frac{1}{R^{2\alpha+1}} \, dR \, ,
	\end{equation*}
	so that by Fubini's theorem 
	\begin{align*}
		\int_{B_r^c}\frac{\left|f(x)\right|}{|x|^{2\alpha}} \, \rho(x) \, dx & = 2\alpha \, \int_r^{+\infty}\frac{1}{R^{2\alpha+1}}\left(\int_{B_R\setminus B_r} \left| f(x) \right|\rho(x) \, dx\right) dR \\ & \leq2\alpha\, M \, \int_r^{+\infty} \frac{R^{\frac{2-\gamma}{m-1}+N-\gamma}}{R^{2\alpha+1}} \, dR =  \frac{2\alpha\,r^{-\left( 2\alpha-\frac{2-\gamma}{m-1}-N+\gamma \right)}}{2\alpha-\frac{2-\gamma}{m-1}-N+\gamma} \, M \, ,
	\end{align*}
	where we are using the assumption that $2\alpha>\frac{2-\gamma}{m-1}+N-\gamma$. Combining this with \eqref{a2}, we obtain the desired continuous-embedding bound.
	
\noindent \eqref{B} With no loss of generality, since $f_n\to f$ in $L^1_{\mathrm{loc}}\!\left(\mathbb{R}^N,\rho\right)$, we can suppose in addition that $ f_n \to f $ pointwise almost everywhere. Hence, thanks to Fatou's lemma, for each $ R \ge r $ and $ p \in [1,\infty) $ we have
	\begin{equation}\label{R-p}
		R^{-\frac{2-\gamma}{m-1}\, p-N+\gamma} \, \int_{B_R} \left|f\right|^p\rho\,dx \leq \liminf_{n\to\infty} R^{-\frac{2-\gamma}{m-1}\, p-N+\gamma} \, \int_{B_R} \left|f_n\right|^p\rho\,dx
		\leq \liminf_{n\to\infty} \left\| f_n \right\|_{p,r}^p .
	\end{equation}
Raising to the $  1 / p $ and taking the supremum over $R\geq r$ completes the proof. In the case $ p=\infty $, we observe that there exists a constant $  C>0$, depending only on $ N,\gamma,K $, such that $ \| \cdot \|_{p,r} \le C^{ 1 / p} \, \| \cdot \|_{\infty,r} $. Hence, from \eqref{R-p} we deduce  
$$
\frac{\left\| f \right\|_{L^p\left(B_R , \rho\right)} }{R^{\frac{2-\gamma}{m-1}+\frac{N-\gamma}{p}}} \le C^{\frac 1 p} \liminf_{n\to\infty} \left\| f_n \right\|_{\infty,r} , 
$$
whereby the claimed inequality upon letting $ p \to \infty $ and taking the supremum $R\geq r$. As for the norms $ | \cdot |_{p,r} $, the same proof follows with inessential modifications. 
%
%
\end{proof}

\begin{pro}\label{closedness}
We have
$$
X_0 = \overline{L^1\!\left(\mathbb{R}^N,\rho\right)}^X =: Y \, .
$$
Moreover, for every $ f \in X_0 $ the truncated sequence $ f_n := \tau_n(f) \, \chi_{B_n} $ converges to $ f $ in $ X $, where $ \tau_n $ is defined in \eqref{truncation-function}. 
\end{pro}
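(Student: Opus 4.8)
The plan is to prove the two statements of Proposition \ref{closedness} in sequence, establishing $X_0 = Y$ by a double inclusion and then deducing the convergence claim for truncations.

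First I would show $Y \subseteq X_0$. Since $X_0$ is a linear subspace of $X$ that is closed (as stated in Subsection \ref{sf}, to be verified in this appendix), it suffices to show $L^1\!\left(\mathbb{R}^N,\rho\right) \subseteq X_0$, i.e.\ that $\ell(f) = 0$ for every $f \in L^1\!\left(\mathbb{R}^N,\rho\right)$. This is a direct estimate: for $R \ge r \ge 1$ one has
\[
R^{-\frac{2-\gamma}{m-1}-N+\gamma} \int_{B_R} |f| \, \rho \, dx \le R^{-\frac{2-\gamma}{m-1}-N+\gamma} \, \|f\|_{L^1(\mathbb{R}^N,\rho)} \, ,
\]
and since the exponent $-\frac{2-\gamma}{m-1}-N+\gamma$ is strictly negative (as $\gamma < 2 \le N$), the right-hand side tends to $0$ as $R \to +\infty$. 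Taking the supremum over $R \ge r$ and then letting $r \to +\infty$ gives $\ell(f) = 0$, hence $f \in X_0$. The closedness of $X_0$ then gives $Y = \overline{L^1(\mathbb{R}^N,\rho)}^X \subseteq X_0$.

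Next I would establish the reverse inclusion $X_0 \subseteq Y$ by directly proving the second assertion: for $f \in X_0$, the truncations $f_n := \tau_n(f)\,\chi_{B_n}$ satisfy $f_n \to f$ in $X$. Since each $f_n$ is bounded and supported in $B_n$, and $\rho$ is locally integrable (a consequence of \eqref{weight-cond} with $\gamma < 2 \le N$), we have $f_n \in L^1\!\left(\mathbb{R}^N,\rho\right)$; thus showing $\|f_n - f\|_{1,r} \to 0$ for some fixed $r$ (say $r=1$, all norms being equivalent) yields $f \in Y$ and completes the proof. To estimate $\|f_n - f\|_{1,1}$, fix $\varepsilon > 0$. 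Since $\ell(f) = \lim_{r\to+\infty}\|f\|_{1,r} = 0$, choose $\bar r \ge 1$ with $\|f\|_{1,\bar r} < \varepsilon$. Then split the supremum over $R \ge 1$ defining $\|f_n - f\|_{1,1}$ into the range $R \in [1, \bar r]$ and the range $R \ge \bar r$. On the tail range $R \ge \bar r$, using $|f_n| \le |f|$ pointwise one has $|f_n - f| \le |f_n| + |f| \le 2|f|$, so
\[
\sup_{R \ge \bar r} R^{-\frac{2-\gamma}{m-1}-N+\gamma} \int_{B_R} |f_n - f| \, \rho \, dx \le 2 \sup_{R \ge \bar r} R^{-\frac{2-\gamma}{m-1}-N+\gamma} \int_{B_R} |f| \, \rho \, dx = 2\,\|f\|_{1,\bar r} < 2\varepsilon \, ,
\]
uniformly in $n$. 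On the bounded range $R \in [1,\bar r]$, the relevant weighted integrals are all dominated by $\int_{B_{\bar r}} |f_n - f|\,\rho\,dx$; since $f_n \to f$ pointwise a.e.\ and $|f_n - f| \le 2|f| \in L^1(B_{\bar r},\rho)$ (as $f \in L^1_{\mathrm{loc}}(\mathbb{R}^N,\rho)$), dominated convergence gives $\int_{B_{\bar r}} |f_n - f|\,\rho\,dx \to 0$, and multiplying by the bounded factor $\sup_{R \in [1,\bar r]} R^{-\frac{2-\gamma}{m-1}-N+\gamma}$ (which is at most $1$) shows this part is $< \varepsilon$ for $n$ large. Combining, $\limsup_n \|f_n - f\|_{1,1} \le 3\varepsilon$, and since $\varepsilon$ is arbitrary, $f_n \to f$ in $X$.

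The main obstacle is the tail estimate: one cannot use dominated convergence on the full space because the $X$-norm is genuinely stronger than an $L^1_{\mathrm{loc}}$ seminorm, so the cut at radius $\bar r$ is essential and must be chosen using precisely the hypothesis $\ell(f)=0$ — this is where membership in $X_0$ (rather than merely $X$) is used, and it is the only non-routine point. Everything else reduces to the negativity of the scaling exponent and standard dominated convergence on bounded balls.
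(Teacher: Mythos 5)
Your proof of the inclusion $X_0 \subseteq Y$ (via convergence of the truncations) is essentially the paper's argument: split the supremum over radii into a bounded range, where dominated convergence on a fixed ball applies, and a tail, where $\ell(f)=0$ controls things uniformly in $n$. The only cosmetic difference is that you bound $|f_n-f| \le 2|f|$ where the paper uses the sharper $|f_n-f|\le |f|$ (available since $\tau_n(f)\chi_{B_n}$ has the same sign as $f$ and $|f_n|\le|f|$), which is immaterial.

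Your proof of the reverse inclusion $Y\subseteq X_0$, however, takes a genuinely different route and has a circularity to fix. You first check $L^1\!\left(\mathbb{R}^N,\rho\right)\subseteq X_0$ (fine — a one-line power-counting estimate), and then conclude $\overline{L^1}^X\subseteq X_0$ by invoking that ``$X_0$ is a closed linear subspace \ldots{} to be verified in this appendix.'' But in the paper's logical structure, the closedness of $X_0$ is not established independently: it is asserted in Subsection \ref{sf} precisely as a consequence of Proposition \ref{closedness} (since $Y$ is closed by definition and the proposition says $X_0=Y$). So you are invoking exactly what the proposition is supposed to prove. The paper instead avoids this by a direct estimate: for $g\in Y$, $h\in L^1\!\left(\mathbb{R}^N,\rho\right)$ and $r\ge r_0$,
\[
\left\|g\right\|_{1,r} \le \left\|h-g\right\|_{1,r_0} + r^{-\omega}\left\|h\right\|_{L^1\left(\mathbb{R}^N,\rho\right)}\,,\qquad \omega:=\tfrac{2-\gamma}{m-1}+N-\gamma>0\,,
\]
then sends $r\to+\infty$ to get $\ell(g)\le \left\|h-g\right\|_{1,r_0}$, and chooses $h$ to make the right side small. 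The gap in your version is easily repaired — closedness of $X_0$ follows in two lines from $\left\|f\right\|_{1,r}\le \left\|f-f_n\right\|_{1,1}+\left\|f_n\right\|_{1,r}$ for $f_n\to f$ in $X$ with $f_n\in X_0$ — but as written your argument does rely on an unproved lemma, and you should either prove it or switch to the paper's direct estimate, which handles both $L^1\subseteq X_0$ and the closure in one stroke.
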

\begin{proof}
	Let us first prove the second part  of the statement, which in particular implies the inclusion $ X_0 \subseteq Y $. We claim that
	\begin{equation*}\label{claim}
\lim_{n \to \infty}	\left\| f_n - f \right\|_{1,r} = 0 \, ,
	\end{equation*} 
	for any $ r \ge 1 $. To this end,  fix $ \omega := \frac{2-\gamma}{m-1}+N-\gamma$ and $ r_0 > r $. By the construction of $ f_n $, it follows that 
	\begin{equation*}
	R^{-\omega} \, \int_{B_R} \left| f_n-f \right| \rho  \, dx \leq
	\begin{cases}
	R^{-\omega}  \, \int_{B_{r_0}} \left| f_n - f \right| \rho \, dx  & \text{if }   r \le R \le r_0 \, , \\
	 R^{-\omega}  \, \int_{B_{R}} \left|  f \right| \rho \, dx  & \text{if } R > r_0 \,  .
	 \end{cases}
	\end{equation*}
	Next, we take the supremum over $R\geq r$ on both sides. This yields
	\begin{equation*}
	\left\| f_n- f \right\|_{1,r} \leq \max\left\{  r^{-\omega} \, \int_{B_{r_0}}  \left| f_n - f \right| \rho \, dx  \,   , \, \left\| f \right\|_{1,r_0} \right\} . 
	\end{equation*}
	Then we pass to the limit as $ n \to \infty $ and apply dominated convergence on the first term on the RHS, obtaining
	\begin{equation*}
	\limsup_{n\to \infty} \left\| f_n-f \right\|_{1,r} \leq  \left\| f \right\|_{1,r_0} .
	\end{equation*}
	Finally, taking the limit as $r_0\to\infty$ we get the claim, since $ f \in X_0 $. 
	
	Now, we show the opposite inclusion $ X_0 \supseteq Y $.  Let $ g \in Y $ and $ h \in L^1\!\left(\mathbb{R}^N,\rho\right)$ be arbitrary functions. Since  
	\begin{equation*}
	R^{-\omega} \, \int_{B_R} \left| g \right| \rho \, dx \leq R^{-\omega} \, \int_{B_R} \left|h-g\right| \rho \, dx +R^{-\omega} \, \int_{B_R} \left| h \right| \rho \, dx \, ,  
	\end{equation*}
	we can take the supremum over $R\geq r \ge r_0 \ge 1$ on both sides to obtain 
	\begin{equation*}
	\left\|  g \right\|_{1,r} \leq \left\| h-g \right\|_{1,r_0}+r^{-\omega} \left\| h \right\|_{L^1\left(\mathbb{R}^N , \rho\right)} ,
	\end{equation*}
	where we also used the fact that $ r \mapsto \| \cdot \|_{1,r} $ is nonincreasing. Passing to the limit as $ r \to +\infty $ we end up with 
	\begin{equation*}
	\lim_{r \to + \infty} \left\| g \right\|_{1,r} \leq \left\| h-g \right\|_{1,r_0} . 
	\end{equation*}
	Since $ g \in Y $, by the definition of this space we can select $ h \in L^1\!\left( \mathbb{R}^N , \rho \right) $ in such a way that the RHS is arbitrarily small, so it follows that $\lim_{r\to+\infty} \left\| g \right\|_{1,r} = 0 $ as desired.  
\end{proof}

\begin{pro}\label{limsup-norm}
Let $ f \in X_\infty $. Then 
\begin{equation*}\label{limsup-identity}
\lim_{r \to +\infty} \left\| f \right\|_{\infty,r} = \underset{|x| \to +\infty}{\operatorname{ess}\limsup} \ |x|^{-\frac{2-\gamma}{m-1}}  \left| f(x) \right| .
\end{equation*}
\end{pro}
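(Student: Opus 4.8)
The plan is to sandwich the two monotone quantities in the statement between one another. Write $a := \frac{2-\gamma}{m-1} > 0$ and, for $x \neq 0$, $g(x) := |x|^{-a}\left|f(x)\right|$, so that by \eqref{m6a} one has $\left\| f \right\|_{\infty,r} = \sup_{R \geq r} R^{-a}\left\| f \right\|_{L^\infty(B_R)}$, which is nonincreasing in $r$; hence $\ell_\infty := \lim_{r\to+\infty}\left\| f \right\|_{\infty,r}$ exists. On the other side, set $S_\rho := \operatorname{ess\,sup}_{|x| \geq \rho} g(x)$, which is nonincreasing in $\rho$, and $L := \inf_{\rho \geq 1} S_\rho = \lim_{\rho \to +\infty} S_\rho = \underset{|x|\to+\infty}{\operatorname{ess}\limsup}\ g(x)$. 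A preliminary remark is that $f \in X_\infty$ forces $S_1 < +\infty$: for a.e.\ $x$ with $|x| \geq 1$ one has $g(x) \leq |x|^{-a}\left\| f \right\|_{L^\infty(B_{2|x|})} \leq 2^a \left\| f \right\|_{\infty,1}$. Hence all the $S_\rho$, as well as $L$ and $\ell_\infty$, are finite, and it suffices to prove $\ell_\infty = L$.

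For the bound $\ell_\infty \geq L$, I would fix $r \geq 1$ and $\delta \in (0,1)$ and cover $\{|x| \geq r\}$ by the geometric annuli $A_k := \{\, r(1-\delta)^{-(k-1)} \leq |x| < r(1-\delta)^{-k}\,\}$, $k \geq 1$. Each $A_k$ has the form $\{(1-\delta)R_k \leq |x| < R_k\}$ with $R_k := r(1-\delta)^{-k} \geq r$, so for a.e.\ $x \in A_k$ we have $|f(x)| = |x|^a g(x) \geq \big((1-\delta)R_k\big)^a g(x)$, whence
\[
R_k^{-a}\left\| f \right\|_{L^\infty(B_{R_k})} \geq R_k^{-a}\operatorname{ess\,sup}_{A_k}|f| \geq (1-\delta)^a \operatorname{ess\,sup}_{A_k} g \, .
\]
Since $R_k \geq r$, the left-hand side is $\leq \left\| f \right\|_{\infty,r}$; taking the supremum over $k$ and using $\sup_k \operatorname{ess\,sup}_{A_k} g = \operatorname{ess\,sup}_{|x| \geq r} g = S_r$ gives $\left\| f \right\|_{\infty,r} \geq (1-\delta)^a S_r$. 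Letting $\delta \to 0$ yields $\left\| f \right\|_{\infty,r} \geq S_r$ for every $r$, and then $r \to +\infty$ gives $\ell_\infty \geq L$.

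For the reverse bound $\ell_\infty \leq L$, I would fix $\rho \geq 1$ and, for $R \geq \rho$, split $B_R$ into the fixed inner ball $B_\rho$ and the annulus $\{\rho \leq |x| < R\}$; on the latter $|f(x)| = |x|^a g(x) \leq R^a S_\rho$ for a.e.\ $x$, so
\[
R^{-a}\left\| f \right\|_{L^\infty(B_R)} \leq \max\!\left\{ R^{-a}\left\| f \right\|_{L^\infty(B_\rho)} ,\ S_\rho \right\} .
\]
Since $\left\| f \right\|_{L^\infty(B_\rho)} < +\infty$ (as $X_\infty \hookrightarrow L^\infty_{\mathrm{loc}}\!\left(\R^N\right)$) and $R \mapsto R^{-a}$ is decreasing, taking the supremum over $R \geq r$ with $r \geq \rho$ gives $\left\| f \right\|_{\infty,r} \leq \max\{ r^{-a}\left\| f \right\|_{L^\infty(B_\rho)} , S_\rho\}$. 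Letting $r \to +\infty$ removes the first term, so $\ell_\infty \leq S_\rho$, and then $\rho \to +\infty$ gives $\ell_\infty \leq L$, which together with the previous step proves the identity.

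The only genuinely delicate point — and the one I would emphasize — is that $\left\| f \right\|_{L^\infty(B_R)}$ records the behaviour of $f$ over the whole ball, in particular near the origin, where $g$ may well be unbounded (e.g.\ whenever $f$ does not vanish at $0$, so that $g \sim |x|^{-a}$ there). This is exactly why, in the upper bound, one must peel off a fixed inner ball $B_\rho$ before sending $R \to +\infty$: the contribution $R^{-a}\left\| f \right\|_{L^\infty(B_\rho)}$ is harmless precisely because it vanishes in that limit. Everything else reduces to routine manipulations of essential suprema and of the monotonicity in $r$ and $\rho$ (including the standard, mildly technical point that $\operatorname{ess\,sup}_{B_R}|f| \geq |f(x)|$ for a.e.\ $x \in B_R$, which underlies the annulus estimate).
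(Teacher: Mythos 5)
Your proof is correct and follows essentially the same sandwich argument as the paper's: both directions proceed by the same monotonicity considerations, and the key move in the upper bound — peeling off a fixed inner ball $B_\rho$ so that the factor $R^{-\frac{2-\gamma}{m-1}}$ kills its contribution as $r\to+\infty$ — is exactly what the paper does with $B_{R_0}$. The only cosmetic differences are that you carry $S_\rho$ rather than the paper's $L+\varepsilon$ threshold, and that you substantiate the lower bound with the geometric-annuli covering where the paper simply labels it straightforward.
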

\begin{proof}
The inequality
$$
L:=\underset{|x| \to +\infty}{\operatorname{ess}\limsup} \ |x|^{-\frac{2-\gamma}{m-1}}  \left| f(x) \right| \le \limsup_{R \to +\infty}  \frac{\left\| f \right\|_{L^\infty(B_R)}}{R^{\frac {2-\gamma}{m-1}}}  \le  \left\| f \right\|_{\infty,r} \qquad \forall r \ge 1
$$
is straightforward, whence $ L \le \lim_{r \to +\infty}  \left\| f \right\|_{\infty,r}  $. In order to prove the reverse, we notice that  for all $ R_0,r \ge 1 $, with $ r>R_0 $, we have
\begin{equation}\label{ee1}
\begin{aligned}
\left\| f \right\|_{\infty,r} = & \, \max \left\{ \sup_{R \ge r} \frac{\left\| f \right\|_{L^\infty\left(B_{R_0}\right)}}{R^{\frac {2-\gamma}{m-1}}}  , \,  \sup_{R \ge r} \frac{\left\| f \right\|_{L^\infty\left(B_R \setminus B_{R_0} \right)}}{R^{\frac {2-\gamma}{m-1}}}  \right\}  \\
= & \,  \max \left\{ \frac{\left\| f \right\|_{L^\infty\left(B_{R_0}\right)}}{r^{\frac {2-\gamma}{m-1}}}  , \,  \sup_{R \ge r} \frac{\left\| f \right\|_{L^\infty\left(B_R \setminus B_{R_0} \right)}}{R^{\frac {2-\gamma}{m-1}}}  \right\}  . 
\end{aligned}
\end{equation}
From the definition of $ \operatorname{ess}\limsup $, for every $ \varepsilon>0 $ we can pick $ R_0 $ so large that 
$$
\left|f(x)\right| \le  \left( L +\varepsilon \right) \left| x \right|^{\frac{2-\gamma}{m-1}} \qquad \text{for a.e. } x \in B_{R_0}^c \, .
$$
Hence, substituting into \eqref{ee1}, we reach 
$$
\left\| f \right\|_{\infty,r}  \le \max \left\{ \frac{\left\| f \right\|_{L^\infty\left(B_{R_0}\right)}}{r^{\frac {2-\gamma}{m-1}}}  , \,  L + \varepsilon \right\}  \qquad \forall r > R_0 \, .
$$
The thesis follows by letting first $ r \to +\infty $ and then $ \varepsilon \to 0 $. 
\end{proof}

\noindent{\textbf{Acknowledgments.} M.M.~was partially funded by the PRIN 2017 project ``Direct and Inverse Problems for Partial Differential Equations: Theoretical Aspects and Applications'' (grant no.~201758MTR2, MIUR, Italy). He also thanks the GNAMPA group of INdAM (Italy). {Both authors are grateful to Fernando Quir\'os for fruitful discussions and valuable suggestions that helped improve a first draft of this paper.}}

\end{document}